\documentclass[11pt,leqno]{article}

\usepackage[margin=1in]{geometry}
\usepackage{amsmath,amssymb,amsfonts,amsthm,amsopn}
\usepackage{bm,mathtools}
\usepackage{stmaryrd}
\usepackage{subcaption}
\usepackage{tikz}
\usepackage{booktabs}
\usepackage{mathrsfs}
\usepackage{graphicx}
\usepackage{ifpdf}
\usepackage[hidelinks]{hyperref}

\DeclareGraphicsExtensions{.pdf,.png,.jpg}
\numberwithin{equation}{section}

\newenvironment{keywords}
  {\par\smallskip\noindent\textbf{Keywords: }}
  {\par\smallskip}
\newenvironment{AMS}
  {\par\noindent\textbf{AMS subject classifications: }}
  {\par\smallskip}

\newcommand{\funding}[1]{\par\textbf{Funding:} #1}
\newcommand{\email}[1]{\href{mailto:#1}{#1}}

\newtheorem{assumption}{Assumption}

\usetikzlibrary{calc,arrows.meta,shapes.geometric,positioning}

\tikzset{
  interface/.style={line width=1.05pt},
  interfaceplane/.style={fill=gray!25,draw=black,line width=0.8pt,fill opacity=0.72},
  cubeedge/.style={line width=0.65pt},
  hiddenedge/.style={line width=0.55pt,dashed},
  positive/.style={circle,fill=black,draw=black,inner sep=1.8pt},
  negative/.style={circle,fill=white,draw=black,line width=0.7pt,inner sep=1.65pt},
  mpoint/.style={star,star points=5,star point ratio=2.2,fill=white,draw=black,line width=0.75pt,inner sep=2.0pt},
  axis/.style={-{Latex[length=2mm]},thin},
  maparrow/.style={-{Latex[length=3mm]},line width=0.8pt}
}

\newcommand{\Pos}[1]{\node[positive] at #1 {};}
\newcommand{\Neg}[1]{\node[negative] at #1 {};}

\newcommand{\CubeMinusOneOne}{%
  \coordinate (A) at (-1,-1,-1);
  \coordinate (B) at ( 1,-1,-1);
  \coordinate (C) at ( 1, 1,-1);
  \coordinate (D) at (-1, 1,-1);
  \coordinate (E) at (-1,-1, 1);
  \coordinate (F) at ( 1,-1, 1);
  \coordinate (G) at ( 1, 1, 1);
  \coordinate (H) at (-1, 1, 1);
  \draw[hiddenedge] (A)--(D)--(H);
  \draw[hiddenedge] (D)--(C);
  \draw[cubeedge] (A)--(B)--(C)--(G)--(F)--(B);
  \draw[cubeedge] (E)--(F);
  \draw[cubeedge] (E)--(H)--(G);
  \draw[cubeedge] (A)--(E);
}

\usetikzlibrary{calc,intersections}
\usetikzlibrary{calc,angles,quotes}

\theoremstyle{plain}
\newtheorem{theorem}{Theorem}[section]
\newtheorem{lemma}[theorem]{Lemma}

\theoremstyle{remark}
\newtheorem{remark}[theorem]{Remark}

\title{Quadrilateral and Hexahedral Immersed Finite Element Methods for Elliptic Interface Problems\thanks{Submitted to the editors DATE.\funding{This work was funded by NSFC under Grant No. 12371370.}
}}

\author{Fangfang Qin\thanks{Jiangsu Key Laboratory of Quantum Computing Science and Devices, School of Science, Nanjing University of Posts and Telecommunications, Nanjing, Jiangsu 210023, China.}
\and Haifeng Ji\footnotemark[2]~$^,$\thanks{Corresponding author: \email{hfji1988@foxmail.com}}
}
\date{}

\ifpdf
\hypersetup{
  pdftitle={Quadrilateral and Hexahedral Immersed Finite Element Methods for Elliptic Interface Problems},
  pdfauthor={F. Qin and H. Ji}
}
\fi

\begin{document}

\maketitle

\begin{abstract}
Immersed finite element (IFE) methods provide an effective framework for solving interface problems on unfitted meshes.
The basic idea underlying IFE methods is to modify standard finite element spaces by enforcing interface conditions at certain points.
It is known that, for nodal degrees of freedom, unisolvence of linear IFE basis functions on triangular elements is subject to a non-obtuse-angle condition for scalar diffusion coefficients. 
This limitation is more severe for tensor diffusion coefficients, for which the non-obtuse-angle condition is no longer sufficient for unisolvence. 
Even on rectangular meshes, conventional bilinear IFE basis functions may not be unisolvent for tensor diffusion coefficients.
In this paper, we develop and analyze a nodal isoparametric IFE method on quadrilateral and hexahedral meshes and show that the unisolvence issue can be overcome by appropriately selecting the enforcement point of the discrete flux condition. 
The key observation is that, unlike triangular elements, the discrete flux in quadrilateral and hexahedral elements is not constant, which provides the flexibility to select such an enforcement point to ensure unisolvence.
We provide a systematic procedure for  this selection  that not only ensures unisolvence of the IFE basis functions 
on general quadrilateral and hexahedral elements with either scalar or tensor-valued diffusion coefficients but also preserves the optimal approximation properties of the resulting IFE space. 
The proposed  IFE method offers several advantages: flexibility for complex geometries, the absence of angle restrictions, and applicability to tensor diffusion coefficients, thereby overcoming the limitations of existing 
nodal IFE methods on rectangular and triangular meshes.
Optimal error estimates are established and confirmed by numerical experiments.
\end{abstract}

\begin{keywords}
Interface problems, immersed finite elements, unfitted meshes, quadrilateral elements, hexahedral elements, a priori error estimates
\end{keywords}

\begin{AMS}
65N15, 65N30, 35R05
\end{AMS}

\section{Introduction}
Interface problems arise in a wide range of applications, including heat transfer, multiphase flow, and materials science, where 
the computational domain is partitioned into subdomains occupied by different materials and the physical parameters may be discontinuous across the interfaces separating these subdomains.
In addition to the governing equations in each subdomain, appropriate interface conditions must be satisfied according to physical laws, which may lead to jumps in the solution or its derivatives across the interface. 
Such reduced regularity must be carefully taken into account in the design of accurate numerical methods.

Let $\Omega\subset\mathbb{R}^N$, $N\in\{2,3\}$, be a bounded convex polygonal/polyhedral domain, and let $\Gamma\subset \Omega$ be a closed $C^2$ hypersurface that separates $\Omega$ into two subdomains \(\Omega_\circ^\pm\).
For notational convenience in the subsequent
analysis, we assign \(\Gamma\) to the positive side by setting
\(
\Omega^+ := \Omega_\circ^+\cup\Gamma
\)
and
\(
\Omega^- := \Omega_\circ^-.
\)
For any function $v$ and region $T$, define $v^\pm=v|_{\Omega^\pm}$ and $T^\pm =T\cap \Omega^\pm$. 
We consider the  model problem 
\begin{subequations}\label{p1}
\begin{align}
-\nabla\cdot(\mathbb{B}^\pm \nabla u^\pm)&=f^\pm
&&\quad \text{in } \Omega^\pm, \label{p1.1}\\
\llbracket u^\pm  \rrbracket&:=u^+-u^-=0 
&&\quad \text{on } \Gamma, \label{p1.2}\\
\llbracket \mathbb{B}^\pm \nabla u^\pm\cdot n \rrbracket&:=\mathbb{B}^+ \nabla u^+\cdot n-\mathbb{B}^- \nabla u^-\cdot n =0 
&&\quad \text{on } \Gamma, \label{p1.3}\\
u&=0 
&&\quad \text{on } \partial\Omega. \label{p1.4}
\end{align}
\end{subequations}
Here $f\in L^2(\Omega)$, $n$ denotes the unit normal to $\Gamma$ pointing toward $\Omega^+$, and  $\mathbb{B}(x)=(b_{ij}(x))_{i,j=1}^N$ is a symmetric positive definite matrix-valued function whose components are piecewise smooth and may be discontinuous across the interface $\Gamma$. 
We assume that $b_{ij}^{\pm}\in C^1(\overline{\Omega^\pm})$ and that there exist positive constants $\beta_{\min}^\pm$ and $\beta_{\max}^\pm$ such that
\begin{equation}\label{def_betaM}
\beta_{\min}^\pm y^\top y
\leq
y^\top\mathbb{B}^{\pm}(x)y
\leq
\beta_{\max}^\pm y^\top y,
\qquad 
\forall y\in\mathbb{R}^N,\quad \forall x\in\Omega^\pm .
\end{equation}

A natural approach to solving interface problems is to employ interface-fitted meshes, in which element boundaries are aligned with the exact interface or with a prescribed geometric approximation of it \cite{bramble1996finite}.
However, generating and updating high-quality interface-fitted meshes can be challenging and computationally expensive, especially for complex or evolving interfaces in three dimensions. 
These difficulties have motivated the development of unfitted mesh methods, in which the mesh is generated independently of the interface.

Among the various unfitted mesh methods, immersed finite element (IFE) methods have received considerable attention due to their simplicity and compatibility with standard finite element frameworks. 
The basic idea of IFE methods is to incorporate interface conditions into finite element spaces.
In contrast to enrichment-based methods (see, e.g., \cite{hansbo2002unfitted,fries2010extended,burman2018cut,Wang2018A}), the IFE space retains the same degrees of freedom as the corresponding standard finite element space. Consequently, IFE methods preserve the conventional finite element structure and can be naturally integrated into existing finite element codes.
The first IFE method was introduced in \cite{li1998immersed} to solve one-dimensional interface problems and was recently revisited in \cite{wang2021robust}. 
Over the past several decades, IFE methods have been extensively analyzed and extended to various interface problems; see, e.g., \cite{Li2003new,he2012convergence,kafafy2005three,taolin2015siam,GuzmanJSC2017,guo2020immersed,2021ji_IFE,ji2023immersed}.

\subsection{Motivations}
The unisolvence of local IFE basis functions, namely, the unique determination of functions in the local IFE space by the prescribed degrees of freedom, is a fundamental issue in the construction of IFE spaces.
For the conventional nodal linear IFE construction with piecewise constant scalar diffusion coefficients, the non-obtuse-angle
condition guarantees unisolvence for arbitrary interface locations and positive coefficient pairs \cite{2021ji_IFE}, whereas
unisolvence may fail on obtuse triangles.
%
A counterexample was also provided in \cite{2021ji_IFE} showing that unisolvence fails for an obtuse triangle.
Furthermore, for tensor diffusion coefficients, unisolvence may fail even when the non-obtuse-angle condition is satisfied. Counterexamples were provided in \cite{An2014A}.

\begin{figure}[ht]
\centering
\begin{tikzpicture}[scale=1.12,line cap=round,line join=round]
\begin{scope}[xshift=0cm]
  \coordinate (A1) at (0,0);
    \coordinate (D) at (0,0);
    \coordinate (A2) at (1.5,0);
  \coordinate (A3) at (-2.598,1.5);
  \coordinate (E)  at (-0.402,0.696);
  
  \coordinate (A2_ext) at ($(A2)!-0.3!(A3)$);

\draw[dashed,thick] (A3)--(A2_ext);

  \draw[thick] (A1)--(A2)--(A3)--cycle;
  \draw[very thick] (A1)--(E);

  \fill (A1) circle (1.6pt);
  \fill (A2) circle (1.6pt);
  \fill (A3) circle (1.6pt);
  \fill (E)  circle (1.6pt);

  \node[below ] at (A1) {$D=A_1$};
  \node[above ] at (A3) {$A_3$};
  \node[above] at (A2) {$A_2$};
  \node[above =1pt] at (E) {$E$};

  \node at (-1.7,0.6) {$T^-$};
  \node at (0.8,0.7) {$T^+$};
  
\coordinate (Eta) at ($(E)!0.8!(D)$);

\coordinate (Xi) at ($(E)!0.8!90:(D)$);

\draw[->,very thick,] (E)--(Xi)
    node[right, above] {$n_h$};

\draw[->,very thick] (E)--(Eta)
    node[right, yshift=3pt ] {$t_h$};

\coordinate (Eta1) at ($(E)!-1.0!(D)$);
\coordinate (Eta2) at ($(E)!1.0!(D)$);

\coordinate (Xi1) at ($(E)!-2.0!90:(D)$);
\coordinate (Xi2) at ($(E)!2.0!90:(D)$);

\draw[dashed,thick] (Eta1)--(Eta2);
\draw[dashed,thick] (Xi1)--(Xi2);

\path[name path=normalLine] (Xi1)--(Xi2);
\path[name path=edgeLine] (A3)--(D);
\path[name intersections={of=normalLine and edgeLine, by=F}];
\fill (F) circle (1.6pt);
\node[below=3pt] at (F) {$F$};

\coordinate (XiBack) at ($(E)!-0.8!90:(D)$);

\pic [draw, thick, angle radius=5pt] 
{right angle = XiBack--E--Eta};

\pic [
    draw,
    thick,
    angle radius=8pt,
    "$\theta$",
    angle eccentricity=1.5
] {angle = A2--E--Xi};

\end{scope}
\end{tikzpicture}
\caption{A geometric configuration illustrating the loss of unisolvence for nodal linear IFE basis functions on an obtuse interface triangle.}
\label{fig:unisolvence-motivation}
\end{figure}

To understand the underlying reason for the loss of unisolvence for obtuse triangles and scalar diffusion coefficients, consider the obtuse interface triangle $T=\triangle A_1A_2A_3$ shown in Fig.~\ref{fig:unisolvence-motivation}, for which $\Gamma\cap\partial T=\{D,E\}$ and $D=A_1$. 
A local IFE function $\phi$ is defined by
\[
\phi|_{T^\pm}=\phi^\pm,\qquad
\phi^\pm(x_1,x_2)=a^\pm x_1+b^\pm x_2+c^\pm,
\]
which is intended to be uniquely determined by the three nodal values
$\phi(A_i)$, $i=1,2,3$, together with the discrete interface conditions
\begin{equation}\label{dis_int_con}
\phi^+(D)=\phi^-(D),\qquad
\phi^+(E)=\phi^-(E),\qquad
\beta^+\nabla\phi^+\cdot n_{h}
=
\beta^-\nabla\phi^-\cdot n_{h},
\end{equation}
where $n_h$ is the unit normal to $\overline{DE}$ pointing toward $T^+$.
The loss of unisolvence occurs if there exists  a nontrivial function $\phi\neq 0$ satisfying \eqref{dis_int_con} and $\phi(A_1)=\phi(A_2)=\phi(A_3)=0$. 
Set $\phi^+(E)=\phi^-(E)=\phi(E)=1$ and $\phi^+(D)=\phi^-(D)=\phi(A_1)=0$. Then the linear functions $\phi^\pm\neq 0$ are uniquely determined. 
It remains to choose positive coefficients $\beta^\pm$ so that the flux continuity condition in \eqref{dis_int_con} is satisfied.
It suffices to prove that 
$
k^\pm:=\nabla \phi^\pm\cdot n_h>0.
$
This is because choosing positive coefficients $\beta^\pm$ satisfying
$
\beta^+/\beta^-=k^-/k^+
$
guarantees the flux continuity condition in \eqref{dis_int_con}.

The signs of \(k^\pm\) can be verified geometrically. 
Let \(F\) be the intersection of \(\overline{A_1A_3}\) with the line
through \(E\) parallel to \( n_h\), and let
\(\theta \) be the acute angle between
\( n_h\) and \(\overrightarrow{EA_2}\), as shown in
Fig.~\ref{fig:unisolvence-motivation}.
A direct calculation gives
\[
k^-=\frac{1}{|EF|}>0,
\qquad
k^+=\frac{\tan\theta}{|DE|}
      -\frac{\sec\theta}{|EA_2|}.
\]
Thus, \(k^+>0\) can be ensured by moving \(A_2\) sufficiently far from
\(E\) while keeping the direction of \(A_3A_2\) fixed.

%


Note that in \cite{ji2023immersed}, the non-obtuse-angle restriction can be removed by employing edge- and face-average degrees of freedom.
This naturally raises the question of whether the angle restriction can also be removed while retaining nodal degrees of freedom.
This question forms the first motivation of the present work.

The second motivation is to extend IFE constructions beyond rectangular and cubical meshes. Many existing IFE methods have been developed on such meshes, which provide simple mesh structures but may offer limited flexibility in representing complex computational domains.
Unlike IFE constructions on rectangular and cubical elements, where IFE functions can be constructed directly on physical elements, general quadrilateral and hexahedral elements require isoparametric mappings from reference elements to physical elements, which are generally non-affine.
Under such mappings, the normal direction of the interface is not preserved, and the transformation of the normal flux involves the Jacobian matrix of the mapping.
Consequently, the flux interface condition becomes more complicated after transforming to reference elements. 
Therefore, constructing and analyzing IFE spaces on general quadrilateral and hexahedral meshes is more challenging.

\subsection{Contributions}
In this work, we develop and analyze isoparametric IFE methods that employ nodal degrees of freedom for problem \eqref{p1} on general unfitted quadrilateral and hexahedral meshes. 
Using isoparametric mappings, IFE spaces are constructed on reference elements, where the interface and the corresponding interface conditions are transformed accordingly. 
We show that, for scalar diffusion coefficients, the standard construction of IFE spaces also requires a non-obtuse-angle condition, which restricts the admissible quadrilateral elements to rectangles. 
A counterexample is constructed on a mildly skewed parallelogram to demonstrate the loss of unisolvence. 
Furthermore, for tensor diffusion coefficients, we provide another counterexample showing that unisolvence may fail on rectangular elements. Both counterexamples are presented in Appendix~\ref{appendix}.

To address the unisolvence issue, we observe that the discrete normal flux on quadrilateral and hexahedral elements is generally nonconstant. This provides additional freedom in choosing the point at which the discrete flux-continuity condition is imposed.
Interestingly, we find that an appropriate choice of the flux-enforcement point does indeed ensure the unisolvence of the local IFE basis functions on general quadrilateral and hexahedral elements with either scalar or tensor-valued diffusion coefficients.
We note that this strategy is not applicable to triangular IFEs, since the discrete flux is constant on each side of the interface and changing the flux-enforcement point does not affect the flux continuity condition in \eqref{dis_int_con}.

For both quadrilateral and hexahedral elements, we identify the
flux-enforcement point using the same general strategy. The
three-dimensional construction, however, requires a more elaborate
algebraic treatment because of the increased complexity of the
interface configurations.
We establish the optimal approximation properties of the resulting IFE spaces, although the flux continuity condition is no longer enforced exactly on the interface. The underlying reason is that the flux-enforcement point can be chosen within the element and is therefore only an $O(h)$ perturbation of a point on the exact interface, which does not affect the optimal approximation properties.
We also establish optimal error estimates for the proposed IFE methods, with constants independent of the interface position relative to the mesh.

Compared with existing nodal IFE methods on rectangular and cubical meshes, the proposed method accommodates general quadrilateral and hexahedral meshes, requires no restrictive angle conditions, and applies to tensor-valued diffusion coefficients. It therefore provides greater flexibility for meshing complex computational domains. To the best of our knowledge, this is the first nodal IFE construction whose unisolvence is guaranteed for every admissible interface configuration and every pair of symmetric positive definite diffusion tensors on both quadrilateral and hexahedral elements.

The remainder of this paper is organized as follows.
In Section~\ref{sec_construction}, we construct IFE spaces and derive explicit formulas for IFE basis functions. In Section~\ref{sec_flux_point_selection}, we identify the flux-enforcement points.
Section~\ref{sec_approx} establishes the approximation properties of the IFE spaces. In Section~\ref{sec_method}, we formulate the IFE methods and derive error estimates. Numerical results are presented in Section~\ref{sec_num}.

\section{Construction of IFE spaces}\label{sec_construction}
Let $\{\mathcal{K}_h\}_{h>0}$ be a family of conforming quadrilateral meshes for $N = 2$ and conforming hexahedral meshes for $N = 3$, generated independently of the interface $\Gamma$. The diameter of $K \in\mathcal{K}_h$ is $h_K$ and $h = \max_{K\in \mathcal{K}_h} h_K$.
An element \(K\in\mathcal{K}_h\) is called an interface element if
\(
    \Gamma\cap\operatorname{int}(K)\neq\emptyset;
\)
otherwise, it is called a non-interface element.
The sets of interface and non-interface elements are denoted by
\(\mathcal{K}_h^\Gamma\) and \(\mathcal{K}_h^{\rm non}\), respectively.

We take \( \widehat K =[-1,1]^N\) as the reference element and define the multilinear polynomial space
\[
Q_1(\widehat K)
=
\operatorname{span}
\left\{
\widehat{x}_1^{\alpha_1}\cdots
\widehat{x}_N^{\alpha_N}:
\alpha_i\in\{0,1\},\ i=1,\ldots,N
\right\}.
\]
The  physical element \(K\in\mathcal{K}_h\) is represented as the image of
\( \widehat K \)
under a bijective mapping
\(
    F_K \in Q_1(\widehat K)^N,
\)
which is assumed to be orientation-preserving, namely,  the Jacobian matrix  \(J_K:=DF_K\)  of $F_K$ satisfies 
\(
    \det J_K(\widehat x)>0
 \)
 for all
 \(
\widehat x\in\widehat K.
\)
The standard isoparametric finite element space on  $K$ is 
\[
   V(K)
    =
    \left\{
        v:
        v=\widehat v\circ F_K^{-1},
        \quad
        \widehat v\in Q_1(\widehat K)
    \right\}.
\]

On each interface element $K\in\mathcal K_h^\Gamma$, we modify $V(K)$ to incorporate the interface conditions \eqref{p1.2}--\eqref{p1.3}.  
We pull back the interface geometry and the interface conditions to the reference element and perform the modification there.

In the following subsections, we fix an arbitrary interface
element \(K\in\mathcal K_h^\Gamma\) and suppress the dependence on \(K\)
in the notation whenever no confusion can arise.
\subsection{Interface conditions on the reference element}
The pullback of a scalar-valued  function $v$ on $K$ is defined by
\(
    \widehat v=v\circ F_K,
\)
and the pushforward of $\widehat v$ onto $K$ is defined by
\(
    v=\widehat v\circ F_K^{-1}.
\)
The chain rule implies 
\[
    \nabla v(x)
    =
    J_K(\widehat x)^{-\top}
   \widehat \nabla \widehat v(\widehat x),
    \qquad x=F_K(\widehat x),
\]
where \(\widehat\nabla\) denotes the gradient with respect to \(\widehat x\).

Let \(\Gamma_K=\Gamma\cap K\) and 
\(
    \widehat\Gamma_K=F_K^{-1}(\Gamma_K).
\)
Let $\widehat n$  denote the  unit normal to $\widehat\Gamma_K$  pointing toward $\widehat K^+:=F_K^{-1}(K^+)$. 
Since $n$ is the unit normal vector to $\Gamma_K$ pointing toward $K^+$, the transformation $x=F_K(\widehat x)$ gives
\[
 n(x) = \frac{J_K^{-\top}\widehat n}
    {|J_K^{-\top}\widehat n|}(\widehat x).
\]
Thus, for the exact solution, we have
\[
(\mathbb B^\pm
    \nabla u^\pm\cdot n)(x)  =    \frac{
        J_K^{-1}(\widehat x)  \mathbb B^\pm(F_K(\widehat x))
       J_K^{-\top}(\widehat x)
    \widehat\nabla\widehat  u^\pm(\widehat x)\cdot   \widehat n(\widehat x)
    }{
        |J_K^{-\top}(\widehat x)\widehat n(\widehat x)|
    }.
\]
Define the transformed diffusion tensor by
\begin{equation}\label{def_b_ref}
    \widehat{\mathcal B}_K^\pm
    =
 J_K^{-1}  \widehat{\mathbb B}^\pm
       J_K^{-\top},\quad  \widehat{\mathbb B}^\pm ={\mathbb B}^\pm \circ F_K.
\end{equation}
Then, by  \eqref{p1.2}--\eqref{p1.3} and the fact that
\(
    |J_K^{-\top}\widehat n|
\)
has no jump across \(\widehat\Gamma_K\),  we obtain
\begin{equation}\label{jump_con_ref}
\llbracket \widehat u^\pm  \rrbracket|_{\widehat\Gamma_K}=0,\quad
    \llbracket
        \widehat{\mathcal B}_K^\pm
        \widehat\nabla\widehat u^\pm\cdot\widehat n
    \rrbracket|_{\widehat\Gamma_K}
    =0.
\end{equation}

\subsection{Local IFE spaces}
We first construct some approximations of 
\(\Gamma_K\) and \(\widehat{\Gamma}_K\).
We  select a point \(x_K\in K\)   and a constant unit vector \(n_{h}\) on the physical element \(K\).
On the reference element,  we define \(\widehat x_K\) and \(\widehat n_{h}\) as
\[
\widehat x_K=F_K^{-1}(x_K),\qquad
\widehat n_h=
\frac{J_K(\widehat x_K)^\top n_h}
{|J_K(\widehat x_K)^\top n_h|},
\]
and define a hyperplane by
\begin{equation}\label{def_H_ref}
    \widehat{\mathcal H}_K
    :=
    \widehat{\mathcal H}(\widehat x_K,\widehat n_{h})
    =
    \left\{
        \widehat x\in \mathbb{R}^N\,:\,
        \widehat \ell_K(\widehat x)=0
    \right\},\quad \widehat \ell_K(\widehat x)=(\widehat x-\widehat x_K)\cdot\widehat n_{h}.
\end{equation}
The planar approximation of \(\widehat{\Gamma}_K\) on the reference
element is then defined as
\(
\widehat{\Gamma}_{h,K}=\widehat{\mathcal H}_K\cap \widehat{K}.
\)
On the physical element \(K\), the interface \(\Gamma_{K}\) is approximated by
\(
\Gamma_{h,K}=F_K(\widehat{\Gamma}_{h,K}),
\)
which is not necessarily a  hyperplane patch since \(F_K\) is a non-affine mapping.

\begin{assumption}
\label{ass:compatible_cut}
The point \(x_K\in K\)   and the vector \(n_{h}\) are chosen such that 
\begin{equation}\label{est_x_K}
    |x_K-x_{\Gamma_K}|+h_K|n_{h}-n(x_{\Gamma_K})|\leq C h_K^2
\end{equation}
for some point \(x_{\Gamma_K}\in\Gamma_K\). 
Moreover, we assume that the pulled-back exact interface
\(\widehat\Gamma_K\) intersects each edge of \(\widehat K\) at most once and that the exact and discrete interfaces induce the same classification of the vertices of \(\widehat K\).
More precisely, let \(\widehat {\mathcal N}\) denote the set of  vertices of
\(\widehat K\), and define
\begin{equation}\label{def_I_K_class}
\begin{aligned}
&\mathcal I_K^+
=
\bigl\{
 \widehat A_i :\,  \widehat A_i\in \widehat K^+,\, \widehat A_i\in \widehat {\mathcal N}
\bigr\},\quad &&\mathcal I_K^-=\widehat {\mathcal N} \backslash \mathcal I_K^+,\\
&\mathcal I_{h,K}^+
=
\bigl\{
\widehat A_i : \, 
\widehat \ell_K (\widehat A_i)\geq0,\, \widehat A_i\in\widehat {\mathcal N}
\bigr\},\quad &&\mathcal I_{h,K}^-=\widehat {\mathcal N} \backslash \mathcal I_{h,K}^+.
\end{aligned}
\end{equation}
We assume that
\(
    \mathcal I_K^s=\mathcal I_{h,K}^s\not= \emptyset
\)
for
\(  s\in\{+, -\}.
\)
\end{assumption}


%
To handle variable diffusion coefficients, we choose arbitrary points
$\widehat {x}_{K^\pm}\in \widehat K^\pm=F_K^{-1}(K^\pm)$
and define the frozen diffusion matrices by
\begin{equation}\label{frozen_B}
 \widehat{\mathbb {B}}^\pm_{h}=\widehat{\mathbb {B}}^\pm(\widehat x_{K^\pm})={\mathbb {B}}^\pm(x_{K^\pm}),\quad x_{K^\pm}=F_K(\widehat x_{K^\pm}).
\end{equation}
Accordingly, by \eqref{def_b_ref}, we define 
\begin{equation}\label{B_h_trans}
    \widehat{\mathcal B}_{h}^\pm(\widehat x)
    =
 J_K^{-1}(\widehat x) \widehat {\mathbb B}^\pm_{h}
       J_K^{-\top}(\widehat x).
\end{equation}
Let \(\bm \alpha =(\alpha_1,\ldots,\alpha_N)^\top\in\{0,1\}^N \) and \(\widehat x^{\bm\alpha}=\widehat x_1^{\alpha_1}\ldots \widehat x_N^{\alpha_N}\), and
let
\(c_{\bm \alpha}(\widehat v)\) 
denote the coefficient of the
monomial $\widehat x^{\bm\alpha}$ in \(\widehat v\in Q_1(\widehat K)\), i.e.,
\(
    \widehat v (\widehat x)=\sum_{\bm\alpha\in\{0,1\}^N}
    c_{\bm\alpha}(\widehat v)\widehat x^{\bm\alpha}.
\)
Let 
\(
    \mathcal A_N=\{\bm\alpha\in\{0,1\}^N:|\bm\alpha|\ge2\}.
\)

For any $\widehat v^\pm\in Q_1(\widehat K)$, motivated by  \eqref{jump_con_ref}, we impose the discrete interface conditions 
\begin{subequations}\label{dis_jump_Q1}
\begin{align}
\llbracket \widehat v^\pm \rrbracket |_{\widehat{\mathcal H}_K}&=0,\label{dis_jump_Q1_1}\\
\llbracket  \widehat{\mathcal{B}}_{h}^\pm \widehat \nabla \widehat v^\pm\cdot \widehat n_{h} \rrbracket (\widehat M_K)&=0,\label{dis_jump_Q1_2}\\
\llbracket  c_{\bm \alpha} (\widehat v^\pm) \rrbracket&=0
    \quad \forall \bm\alpha\in \mathcal A_N,\label{dis_jump_Q1_3}
\end{align}
\end{subequations}
where $\widehat M_K$ is a point in $\widehat K$, referred to as the flux-enforcement
point, whose selection will be specified later. 
We emphasize that the notation
\(
    \llbracket v^\pm\rrbracket:=v^+-v^-
\)
is used for scalar-, vector-, or matrix-valued functions, where \(v^\pm\) are
defined on the same domain. 
This is slightly different from the original definition of the jump
\(\llbracket u^\pm\rrbracket\), where \(u^\pm\) are defined on different
subdomains. Since no ambiguity arises, the same notation is used for simplicity.
Define, on the reference element,
\[
   \widehat V_K^{\rm IFE}(\widehat K)
    =
    \left\{
        \widehat v: \widehat v|_{\widehat K^\pm}=\widehat v^\pm,\, \widehat v^\pm\in Q_1(\widehat K),~ \widehat v^\pm\text{ satisfy   \eqref{dis_jump_Q1_1}--\eqref{dis_jump_Q1_3}}
    \right\}.
\]
The  local IFE space on the physical element \(K\) is
defined by
\[
   V^{\rm IFE}(K)
    =
    \left\{
       v : v= \widehat v\circ F_K^{-1}, ~
        \widehat v\in
        \widehat V_K^{\rm IFE}(\widehat K)
    \right\}.
\]

\begin{remark}
For the practical implementation of the numerical experiments in this
paper, we construct \(\widehat x_K\) and \(\widehat n_{h}\) from the intersection points of
\(\widehat\Gamma_K\) with the edges of \(\widehat K\). In two dimensions,
\(\widehat x_K\) is the midpoint of the two intersection points, and
\(\widehat n_{h}\) is a unit normal to the line segment joining them.
In three dimensions, following \cite{guo2020immersed}, we select three
intersection points that form a triangle satisfying a uniform maximum-angle
condition. We then take \(\widehat x_K\) as its centroid and
\(\widehat n_{h}\) as a unit normal to its plane.
Other choices are also possible. For example, one may choose
\(x_K\in\Gamma_K\) and set \(n_h=n(x_K)\). Alternatively, the
construction may also be based on an interpolated level-set function
\cite{ji2023immersed}.
\end{remark}


\begin{remark}
Condition~\eqref{dis_jump_Q1_1} enforces the continuity of the function values and tangential derivatives at the point \(\widehat{x}_K\), i.e.,
\[
 \llbracket
  \widehat v^\pm
    \rrbracket(\widehat x_K)=0,
    \quad
     \llbracket
  \widehat   \nabla 
  \widehat v^\pm \cdot  \widehat t_{i,h}
    \rrbracket(\widehat x_K)=0, \quad i=1,\ldots, N-1,
\]
where \(\{\widehat t_{i,h}\}_{i=1}^{N-1}\) is an orthonormal basis of the tangent space to \(\widehat{\mathcal H}_K\).
\end{remark}

\begin{remark}
Condition~\eqref{dis_jump_Q1_3} is equivalent to
\(
    \llbracket
        \widehat\partial^{\bm\alpha}\widehat v^\pm
    \rrbracket
    =0
\)
for all
\(
\bm\alpha\in \mathcal A_N,
\)
where
\(
    \widehat\partial^{\bm\alpha}
    =
    \frac{\partial^{|\bm\alpha|}}
    {\partial\widehat x_1^{\alpha_1}\cdots
     \partial\widehat x_N^{\alpha_N}}.
\)
This condition enforces the continuity of the higher-order derivatives, thereby providing the additional coupling conditions required for the unisolvence of the local IFE basis functions.
\end{remark}

\subsection{Derivation of explicit IFE basis functions} \label{subsec_explicit_ife_basis}
We first establish that every
\(\widehat\phi\in\widehat V_K^{\rm IFE}(\widehat K)\)
is uniquely determined by its nodal values.
A direct derivation would require solving a linear system for the \(2^{N+1}\) coefficients of the two pieces \(\widehat\phi^\pm\). 
We instead follow an augmented approach developed in \cite{ji2023immersed} to determine these coefficients.

Let \(\{\widehat\phi_i\}_{i=1}^{2^N}\) be the standard nodal basis functions of \(Q_1(\widehat K)\), and 
let \(\widehat\Pi \) be the standard nodal interpolation operator, i.e., 
\(
    \widehat\Pi\widehat v
    =
    \sum_{i=1}^{2^N}
    \widehat v(\widehat A_i)\widehat\phi_i,
\)
where \(\widehat A_i\) is the vertex associated with
\(\widehat\phi_i\).

For \(\widehat\phi \in \widehat V_K^{\rm IFE}(\widehat K)\),  once the augmented variable
\[
\alpha_K:=\llbracket \widehat \nabla \widehat\phi^\pm \cdot \widehat n_{h}\rrbracket(\widehat M_K)
\]
is known, the discrete jump conditions \eqref{dis_jump_Q1_1} and \eqref{dis_jump_Q1_3} imply that
\begin{equation}\label{deco_phi}
\widehat \phi =\widehat\Pi \widehat \phi+ (\widehat w-\widehat\Pi \widehat w)\alpha_K,
\end{equation}
where 
\begin{equation}\label{def_w}
\widehat w|_{\widehat K^\pm}= \widehat w^\pm,\quad  \widehat w^-=0,\quad \widehat w^+=\widehat \ell_K(\widehat x)=(\widehat x-\widehat x_K)\cdot\widehat n_{h}.
\end{equation}

It remains to determine  the augmented variable \(\alpha_K\) from the discrete flux condition
\eqref{dis_jump_Q1_2}. Substituting \eqref{deco_phi} into \eqref{dis_jump_Q1_2} gives
\[\llbracket \widehat n_{h}^\top \widehat{\mathcal{B}}_{h}^\pm \widehat \nabla (\widehat w^\pm-\widehat\Pi \widehat w) \rrbracket |_{\widehat M_K}\alpha_K=-\llbracket  \widehat n_{h}^\top\widehat{\mathcal{B}}_{h}^\pm \widehat \nabla \widehat\Pi \widehat \phi \rrbracket|_{\widehat M_K}.\]
Using \( \widehat \nabla \widehat w^+= \widehat n_{h}\) and \(\widehat w^-=0\), and
decomposing
\(\widehat\nabla\widehat\Pi\widehat w\)
into its normal and tangential components, we have
\[\llbracket
\widehat n_{h}^\top \widehat{\mathcal{B}}_{h}^\pm \widehat \nabla (\widehat w^\pm-\widehat\Pi \widehat w)
\rrbracket
= \widehat n_{h}^\top \widehat{\mathcal{B}}_{h}^+\widehat n_{h} - \llbracket \widehat n_{h}^\top   \widehat{\mathcal{B}}_{h}^\pm \widehat n_{h} \rrbracket \widehat \nabla \widehat\Pi \widehat w  \cdot \widehat n_{h} - \sum_{i=1}^{N-1} \llbracket \widehat n_{h}^\top   \widehat{\mathcal{B}}_{h}^\pm \widehat t_{i,h} \rrbracket (\nabla   \widehat\Pi\widehat w \cdot\widehat  t_{i,h}).\]
Consequently, the augmented variable satisfies
\begin{equation}\label{sol_alpha}
\mathcal D(\widehat M_K) \alpha_K = -\left.\left((\rho^+)^{-1} \llbracket  \widehat n_{h}^\top\widehat{\mathcal{B}}_{h}^\pm \widehat \nabla \widehat\Pi \widehat \phi \rrbracket \right)\right|_{ \widehat M_K}, 
\end{equation}
where
\[
\begin{aligned}
&\mathcal D(\widehat x)=1+\left(\frac{\rho^-}{\rho^+}-1\right) \widehat \nabla \widehat\Pi \widehat w  \cdot \widehat n_{h}-\sum_{i=1}^{N-1}\frac{1}{\rho^+}\llbracket \widehat n_{h}^\top   \widehat{\mathcal{B}}_{h}^\pm \widehat t_{i,h} \rrbracket (\nabla   \widehat\Pi\widehat w) \cdot\widehat  t_{i,h},
\\
&\rho^\pm(\widehat x ) = \widehat n_{h}^\top \widehat{\mathcal{B}}_{h}^\pm\widehat n_{h}>0.
\end{aligned}
\]

The remaining task is  to select an appropriate flux-enforcement point \(\widehat M_K\) such that
\(\mathcal D(\widehat M_K)\neq0\). 
The following geometric property provides a sufficient condition for this purpose.

\begin{lemma}\label{lem_point_req}
There exists a point
\(\widehat M_K\) such that
\begin{equation}\label{key_iden}
\widehat M_K\in\widehat K,\qquad 
    (\widehat\nabla\widehat\Pi\widehat w)(\widehat M_K)
    =
    \lambda\widehat n_h,
    \quad
    \lambda\in[0,1].
\end{equation}
\end{lemma}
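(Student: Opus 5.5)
The plan is to make $\widehat\nabla\widehat\Pi\widehat w$ explicit and reduce \eqref{key_iden} to a statement about "positive fractions" of the edges of $\widehat K$. By \eqref{def_w}, $\widehat w=g:=\max(\widehat\ell_K,0)$ on $\widehat K$ up to the boundary convention, because Assumption~\ref{ass:compatible_cut} says the vertex classification agrees with the sign of $\widehat\ell_K$. Hence $\widehat\Pi\widehat w=\sum_{\widehat A_i\in\mathcal I^+_{h,K}}\widehat\ell_K(\widehat A_i)\widehat\phi_i$.

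\emph{Step 1: reduction to edge fractions.} For a $Q_1$ interpolant, $\widehat\partial_k\widehat\Pi\widehat w$ does not depend on $\widehat x_k$. It is the multilinear interpolation, in the remaining variables, of the difference quotients $\tfrac12\bigl(g(\widehat A')-g(\widehat A)\bigr)$ over the $2^{N-1}$ edges $[\widehat A,\widehat A']$ parallel to $\widehat e_k$. Since $g$ is the positive part of an affine function, each such quotient equals $\theta_e\,(\widehat n_h)_k$. Here $\theta_e\in[0,1]$ is the fraction of the edge $e$ lying in $\{\widehat\ell_K\ge0\}$: it is $1$ if both endpoints are in $\mathcal I^+_{h,K}$, $0$ if both are in $\mathcal I^-_{h,K}$, and otherwise a value read off from the edge cut point. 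Therefore
\[
\widehat\partial_k\widehat\Pi\widehat w(\widehat x)=(\widehat n_h)_k\,\Theta_k(\widehat x),\qquad \Theta_k(\widehat x)\in[0,1],
\]
where $\Theta_k$ is the multilinear interpolant of the $\theta_e$ in the variables $\widehat x_j$, $j\neq k$. The lemma is then equivalent to finding $\widehat M_K\in\widehat K$ and $\lambda\in[0,1]$ with $\Theta_k(\widehat M_K)=\lambda$ for every $k$ with $(\widehat n_h)_k\neq0$. Indices with $(\widehat n_h)_k=0$ impose no condition.

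\emph{Step 2: monotonicity.} Put $s_j=\operatorname{sign}((\widehat n_h)_j)$. Moving a $k$-edge in direction $s_j\widehat e_j$ increases $\widehat\ell_K$ along the whole edge, so $\theta_e$ is nondecreasing. Hence $\Theta_k$ is coordinatewise nondecreasing in $s_j\widehat x_j$, and it attains its extreme values at the two opposite corners $\widehat V^-$ (minimizing $\widehat\ell_K$) and $\widehat V^+$ (maximizing $\widehat\ell_K$). At $\widehat V^-$, every edge through the corner that points into $\widehat K$ starts at a vertex in $\mathcal I^-_{h,K}$. At $\widehat V^+$, every such edge ends at a vertex in $\mathcal I^+_{h,K}$. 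Both sets are nonempty by Assumption~\ref{ass:compatible_cut}.

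\emph{Step 3: simultaneous solution.} In 2D the system decouples: $\Theta_1=\Theta_1(\widehat x_2)$ and $\Theta_2=\Theta_2(\widehat x_1)$ are affine. It suffices that their ranges $[\Theta_1^{\min},\Theta_1^{\max}]$ and $[\Theta_2^{\min},\Theta_2^{\max}]$ intersect; I would check this by a short case analysis over the cut patterns (corner cut, or two opposite edges cut) using Step 2, then solve each equation separately. For $N=3$ the $\Theta_k$ share variables, so I would use a topological argument. Apply the Poincaré--Miranda theorem to
\[
f_k(\widehat x)=\Theta_{k+1}(\widehat x)-\Theta_k(\widehat x),\qquad k=1,2,
\]
on a suitable face-to-face slab, restricting first to the diagonal direction $\widehat V^-\to\widehat V^+$ to fix one parameter. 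Alternatively, run a continuation argument: follow the curve $\{\Theta_1=\Theta_2\}$ from a face where $\Theta_1\le\Theta_2$ to one where the reverse holds, and use the intermediate value theorem for $\Theta_3-\Theta_1$ along it.

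\emph{Main obstacle.} I expect Step 3 in three dimensions to be the hard part. The functions $\Theta_k$ are bilinear rather than affine, and the sign conditions that Poincaré--Miranda needs on opposite faces must be verified over all admissible cut configurations: one vertex cut off, two, three, or four. If the topological route becomes unwieldy, the fallback is an explicit case-by-case construction over the finitely many cut patterns of the cube, up to symmetry. In each case one would write down the bilinear $\Theta_k$ and exhibit $\widehat M_K$ directly, using Step 2 to handle the sign bookkeeping.
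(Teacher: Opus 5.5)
\textbf{Gap: the three-dimensional case is not proved.}

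Your Steps~1 and~2 are correct and give a clean reformulation: $\widehat\partial_k\widehat\Pi\widehat w=(\widehat n_h)_k\Theta_k$, with $\Theta_k\in[0,1]$ independent of $\widehat x_k$. In 2D your range-intersection check also goes through: a corner cut gives the common value $1$ or $0$, and an opposite-edge cut makes one range all of $[0,1]$. One small imprecision: $\widehat w$ is not $\max(\widehat\ell_K,0)$ on $\widehat K$, since $\widehat\Gamma_K\neq\widehat{\mathcal H}_K$. The two agree only at the vertices, which is all $\widehat\Pi\widehat w$ uses.

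The three-dimensional case is where the content of the lemma lies, and the tools you name are not shown to apply. For Poincar\'e--Miranda you specify neither the two-dimensional domain nor the opposite-side sign conditions, and these would have to be checked for every cut pattern. For the continuation argument, $\{\Theta_1=\Theta_2\}$ is generically a surface in the cube, not a curve. So ``following it'' and then applying the intermediate value theorem is not well defined as stated.

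\textbf{The missing idea.} A short combinatorial dichotomy makes topology unnecessary, in any dimension $N$. After reflecting so that $(\widehat n_h)_i\ge0$, the set $\mathcal I_{h,K}^+$ is an up-set.
\begin{itemize}
\item If some neighbor $\widehat V^+-2\widehat e_i$ of $\widehat V^+=(1,\ldots,1)^\top$ lies in $\mathcal I_{h,K}^-$, then the whole face $\{\widehat x_i=-1\}$ does.
\item If some neighbor $\widehat V^-+2\widehat e_j$ of $\widehat V^-$ lies in $\mathcal I_{h,K}^+$, then the whole face $\{\widehat x_j=1\}$ does.
\end{itemize}
For $i\neq j$ these two faces share a vertex, so both events can occur only if $i=j$, that is, $\mathcal I_{h,K}^+=\{\widehat x_i=1\}$.

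If both events do not occur, there are two possibilities. Either every edge through $\widehat V^+$ lies in $\{\widehat\ell_K\ge0\}$, so $\Theta_k(\widehat V^+)=1$ for all $k$ and $\lambda=1$. Or every edge through $\widehat V^-$ lies in $\{\widehat\ell_K<0\}$, so $\Theta_k(\widehat V^-)=0$ for all $k$ and $\lambda=0$.

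In the face case, $\Theta_k=(1+\widehat x_i)/2$ for every $k\neq i$. You may therefore fix the other coordinates arbitrarily and set $\widehat x_i=2\Theta_i-1\in[-1,1]$.

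This is exactly the paper's construction. Lemma~\ref{lem:vertex_gradient} handles the corner with $\lambda=1$ (Cases~2D-I, 3D-I, 3D-II). Lemma~\ref{lem:complementarity} handles the $\lambda=0$ corner (Cases~2D-III, 3D-IV). The explicit edge point $\widehat M_K=(\delta/n_1-1,-1,-1)^\top$ handles Case~3D-III, with its 2D analogue in Case~2D-II. Your fallback of a case-by-case construction would lead there, but as written the proposal stops before this step.
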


\begin{proof}
The proof is given constructively in Section~\ref{sec_flux_point_selection}.
\end{proof}

We select the flux-enforcement points as in
Sections~\ref{subsec_quad_selection}
and~\ref{subsec_hex_selection}. Then, using \eqref{key_iden}  we have
 \begin{equation}\label{fenmu}
 \mathcal D(\widehat M_K) = 1+(\rho_K^-/\rho_K^+-1) \lambda\ge \min\{1, \rho_K^-/\rho_K^+\}\ge \min\{1, \beta_{\min}^-/\beta_{\max}^+\}>0,
 \end{equation}
 where $\rho_K^\pm=\rho^\pm(\widehat M_K)$.
 Therefore, \(\alpha_K\) is uniquely determined by
\eqref{sol_alpha}, and we obtain the following  result.

 \begin{lemma}\label{unisolve_IFE}
For any prescribed nodal values
\(\{c_i\}_{i=1}^{2^N}\), there exists a unique
\(\widehat\phi\in\widehat V_K^{\rm IFE}(\widehat K)\)
such that
\(\widehat\phi(\widehat A_i)=c_i\), \(i=1,\ldots,2^N\).
\end{lemma}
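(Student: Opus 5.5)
We prove existence and uniqueness together by turning the construction in Section~\ref{subsec_explicit_ife_basis} into an equivalence: a function satisfies the constraints iff it has the form \eqref{deco_phi} with \(\alpha_K\) solving \eqref{sol_alpha}.

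\textbf{Existence.} Given \(\{c_i\}\), set \(\widehat\Pi\widehat\phi:=\sum_i c_i\widehat\phi_i\). By \eqref{fenmu}, \(\mathcal D(\widehat M_K)>0\), so \eqref{sol_alpha} defines a unique \(\alpha_K\). Then define \(\widehat\phi\) by \eqref{deco_phi}, namely \(\widehat\phi^\pm=\sum_i c_i\widehat\phi_i+\alpha_K(\widehat w^\pm-\widehat\Pi\widehat w)\). We check each requirement.
\begin{itemize}
\item Both pieces lie in \(Q_1(\widehat K)\), since \(\widehat w^\pm\) are affine.
\item Condition \eqref{dis_jump_Q1_1} holds: the jump equals \(\alpha_K\widehat\ell_K\), which vanishes on \(\widehat{\mathcal H}_K\).
\item Condition \eqref{dis_jump_Q1_3} holds: the jump \(\alpha_K\widehat\ell_K\) is affine, so all coefficients \(c_{\bm\alpha}\) with \(|\bm\alpha|\ge2\) of the jump vanish.
\item The nodal values are correct: \(\widehat w-\widehat\Pi\widehat w\) vanishes at every vertex. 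This uses that \(\widehat w(\widehat A_i)\) is evaluated with the piece on the side containing \(\widehat A_i\), and Assumption~\ref{ass:compatible_cut} guarantees that \(\widehat A_i\in\widehat K^\pm\) matches the sign of \(\widehat\ell_K(\widehat A_i)\). Hence \(\widehat\phi(\widehat A_i)=c_i\).
\item The flux condition \eqref{dis_jump_Q1_2} holds. Reversing the algebra preceding \eqref{sol_alpha} shows that \eqref{dis_jump_Q1_2} is equivalent to \eqref{sol_alpha}, which holds by the choice of \(\alpha_K\).
\end{itemize}

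\textbf{Uniqueness.} By linearity, it suffices to show that a \(\widehat\phi\in\widehat V_K^{\rm IFE}(\widehat K)\) with zero nodal values must vanish. Let \(d=\widehat\phi^+-\widehat\phi^-\in Q_1\). By \eqref{dis_jump_Q1_3}, \(d\) is affine. By \eqref{dis_jump_Q1_1}, the affine function \(d\) vanishes on the hyperplane \(\widehat{\mathcal H}_K\). Hence \(d=\alpha\widehat\ell_K\) for some scalar \(\alpha\), and since \(\widehat\nabla\widehat\ell_K=\widehat n_h\), this gives \(\alpha=\alpha_K\).

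Now set \(\widehat\psi^\pm=\widehat\phi^\pm-\alpha_K\widehat w^\pm\). Then \(\widehat\psi^+=\widehat\psi^-=:\widehat\psi\in Q_1\). Its nodal values are \(\widehat\psi(\widehat A_i)=\widehat\phi(\widehat A_i)-\alpha_K\widehat w(\widehat A_i)\), again using the vertex classification of Assumption~\ref{ass:compatible_cut}. Therefore \(\widehat\psi=\widehat\Pi\widehat\phi-\alpha_K\widehat\Pi\widehat w\), which reproduces \eqref{deco_phi}.

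Substituting into \eqref{dis_jump_Q1_2} yields \eqref{sol_alpha} with right-hand side zero, because \(\widehat\Pi\widehat\phi=0\). Since \(\mathcal D(\widehat M_K)\neq0\) by \eqref{fenmu}, we get \(\alpha_K=0\), and hence \(\widehat\phi=\widehat\Pi\widehat\phi=0\).

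\textbf{Main obstacle.} The only delicate point is the positivity of \(\mathcal D(\widehat M_K)\) at the chosen point. This rests on Lemma~\ref{lem_point_req}: since \(\widehat\nabla\widehat\Pi\widehat w(\widehat M_K)=\lambda\widehat n_h\), the tangential terms \((\widehat\nabla\widehat\Pi\widehat w)\cdot\widehat t_{i,h}\) vanish, which removes the possibly sign-indefinite tensor cross terms \(\llbracket\widehat n_h^\top\widehat{\mathcal B}_h^\pm\widehat t_{i,h}\rrbracket\). What remains is the convex combination \(1+(\rho^-/\rho^+-1)\lambda\) with \(\lambda\in[0,1]\), which is bounded below by \(\min\{1,\rho^-/\rho^+\}>0\). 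We take Lemma~\ref{lem_point_req} and its construction as given. The rest is linear bookkeeping, with the one subtle point being the consistency of the vertex classification used in evaluating \(\widehat\Pi\widehat w\).
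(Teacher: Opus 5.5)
Your proof is correct and follows the paper's augmented-variable argument: conditions \eqref{dis_jump_Q1_1} and \eqref{dis_jump_Q1_3} force the decomposition \eqref{deco_phi}, the flux condition \eqref{dis_jump_Q1_2} reduces to the scalar equation \eqref{sol_alpha}, and \(\mathcal D(\widehat M_K)\neq0\) follows from \eqref{key_iden}; you also spell out the existence and uniqueness directions that the paper leaves implicit. One small refinement: \((\widehat w-\widehat\Pi\widehat w)(\widehat A_i)=0\) holds by the interpolation property alone, so Assumption~\ref{ass:compatible_cut} is not needed there; it is needed only so that the \(\widehat\Pi\widehat w\) in \(\mathcal D\), built from the exact sets \(\widehat K^\pm\), coincides with the one computed from \(\mathcal I_{h,K}^+\) in Section~\ref{sec_flux_point_selection}, which is what makes Lemma~\ref{lem_point_req} applicable.
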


Define the local IFE basis functions
\(\{\widehat\phi_{i}^{\rm IFE}\}_{i=1}^{2^N}\) by
\begin{equation}\label{def_IFE_basis}
    \widehat\phi_{i}^{\rm IFE}
    \in
    \widehat V_K^{\rm IFE}(\widehat K),
    \qquad
    \widehat\phi_{i}^{\rm IFE}(\widehat A_j)
    =
    \delta_{ij},
    \qquad
    i,j=1,\ldots,2^N,
\end{equation}
where \(\delta_{ij}\) denotes the Kronecker delta.
Taking
\(\widehat\Pi\widehat\phi=\widehat\phi_i\)
in \eqref{deco_phi} gives 
\begin{equation}\label{phi_hat}
    \widehat\phi_{i}^{\rm IFE}
    =
    \widehat\phi_i
    +
    \alpha_{i}
    (
        \widehat w-\widehat\Pi\widehat w
    ),
    \qquad
    i=1,\ldots,2^N,
\end{equation}
where
\begin{equation}\label{phi_hat_fenmu}
    \alpha_{i}
    =
    -
   \left. \frac{
        (\rho^+)^{-1}
        \llbracket
            \widehat n_{h}^{\top}
            J_K^{-1} \widehat {\mathbb B}^\pm_{h}
       J_K^{-\top}
            \widehat\nabla\widehat\phi_i
        \rrbracket
    }{
       1+\left(\rho^-/\rho^+-1\right) \widehat \nabla \widehat\Pi \widehat w  \cdot \widehat n_{h}
    }\right|_{\widehat M_K}.
\end{equation}
The corresponding basis functions on the physical element \(K\) are
\[
    \phi_{i}^{\rm IFE}
    =
    \widehat\phi_{i}^{\rm IFE}\circ F_K^{-1},
    \qquad
    i=1,\ldots,2^N.
\]

\begin{remark}
If the centroid of \(\widehat\Gamma_{h,K}\) (the midpoint in two dimensions)
is chosen as the flux-enforcement point \(\widehat M_K\), then
\(\mathcal D(\widehat M_K)\) may vanish, leading to a loss of unisolvence
of the local IFE space. Two such counterexamples are provided in
Appendix~\ref{appendix}: one for scalar diffusion coefficients on a parallelogram and
the other for tensor-valued diffusion coefficients on a square.
\end{remark}

\section{Selection of flux-enforcement points}
\label{sec_flux_point_selection}
The purpose of this section is to select
a point \(\widehat M_K\) so that \eqref{key_iden} holds.
We first present three elementary results.
\begin{lemma}[Complementary configuration]\label{lem:complementarity}
By the complementary configuration, we mean the configuration obtained by
interchanging the roles of \(\widehat K^+\) and \(\widehat K^-\).
The corresponding  functions and unit normal vector
are defined by
\[
  \widehat w^{\,c}|_{\widehat K^-} = \widehat\ell_K^{\,c}(\widehat x)=-\widehat\ell_K(\widehat x),\quad
    \widehat w^{\,c}|_{\widehat K^+}=0, 
    \quad \widehat n_h^{\,c}=-\widehat n_h.
\]
Suppose that there exist
 \(\widehat M_K\in\widehat K\) and
\(\lambda^{c}\in[0,1]\) such that
\[
    \widehat\nabla
    \widehat\Pi\widehat w^{\,c}(\widehat M_K)
    =
    \lambda^{c}\widehat n_h^{\,c},
\]
then the same point \(\widehat M_K\) satisfies
\[
    \widehat\nabla
    \widehat\Pi\widehat w(\widehat M_K)
    =
    \lambda \widehat n_h,\quad \lambda=1-\lambda^{c}\in[0,1].
\]
\end{lemma}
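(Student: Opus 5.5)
The plan is to use the linearity of the nodal interpolation operator \(\widehat\Pi\) together with the identity \(\widehat w+\widehat w^{\,c}=\widehat\ell_K\) on all of \(\widehat K\). Because the interpolant of an affine function is that same affine function, this identity lets us read off \(\widehat\nabla\widehat\Pi\widehat w\) directly from \(\widehat\nabla\widehat\Pi\widehat w^{\,c}\).

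\emph{Step 1: the pointwise identity.} On \(\widehat K^+\) we have \(\widehat w=\widehat\ell_K\) and \(\widehat w^{\,c}=0\). On \(\widehat K^-\) we have \(\widehat w=0\) and \(\widehat w^{\,c}=-\widehat\ell_K\). Hence
\[
\widehat w-\widehat w^{\,c}=\widehat\ell_K\quad\text{on }\widehat K .
\]
Only the vertex values enter \(\widehat\Pi\), and every vertex lies in exactly one of \(\widehat K^\pm\). Assumption~\ref{ass:compatible_cut} makes this classification consistent; with \(\Gamma\) assigned to the positive side, it is unambiguous. So the identity holds at every vertex \(\widehat A_i\).

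\emph{Step 2: interpolate.} Since \(\widehat\ell_K\) is affine, it belongs to \(Q_1(\widehat K)\), so \(\widehat\Pi\widehat\ell_K=\widehat\ell_K\). Linearity of \(\widehat\Pi\) then gives
\[
\widehat\Pi\widehat w=\widehat\Pi\widehat w^{\,c}+\widehat\ell_K .
\]
Taking gradients and using \(\widehat\nabla\widehat\ell_K=\widehat n_h\), we obtain for every \(\widehat x\in\widehat K\)
\[
\widehat\nabla\widehat\Pi\widehat w(\widehat x)=\widehat\nabla\widehat\Pi\widehat w^{\,c}(\widehat x)+\widehat n_h .
\]

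\emph{Step 3: evaluate at \(\widehat M_K\).} Substituting the hypothesis \(\widehat\nabla\widehat\Pi\widehat w^{\,c}(\widehat M_K)=\lambda^c\widehat n_h^{\,c}=-\lambda^c\widehat n_h\) gives
\[
\widehat\nabla\widehat\Pi\widehat w(\widehat M_K)=(1-\lambda^c)\widehat n_h .
\]
Since \(\lambda^c\in[0,1]\), we also have \(\lambda=1-\lambda^c\in[0,1]\), which is the claim.

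There is no genuine obstacle in this argument. The only point that needs care is Step 1: the vertex identity is all that is required, and it needs no condition on how \(\widehat\Gamma_K\) cuts the interior of \(\widehat K\).
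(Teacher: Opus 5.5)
Your proof is correct and is essentially the paper's argument: the paper also writes \(\widehat w=\widehat\ell_K+\widehat w^{\,c}\), uses \(\widehat\Pi\widehat\ell_K=\widehat\ell_K\) and \(\widehat\nabla\widehat\ell_K=\widehat n_h\), and then takes the gradient at \(\widehat M_K\). One correction: your opening sentence states \(\widehat w+\widehat w^{\,c}=\widehat\ell_K\), but the identity you correctly derive and use in Step~1 is \(\widehat w-\widehat w^{\,c}=\widehat\ell_K\).
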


\begin{proof}
By definition, we have
\(
    \widehat w
    =
    \widehat\ell_K+\widehat w^{\,c}.
\)
Since \(\widehat\ell_K\) is an affine function,  we have \(\widehat\Pi\widehat\ell_K=\widehat\ell_K\), and hence
\(
    \widehat\Pi\widehat w
    =
    \widehat\ell_K+\widehat\Pi\widehat w^{\,c}.
\)
Note that \( \nabla \widehat\ell_K =\widehat n_h\).
Taking the gradient at \(\widehat M_K\) proves the result.
\end{proof}

\begin{lemma}[Invariance under coordinate reflections and permutations]
\label{lem:coordinate_symmetry}
Let   \(\mathsf P\) be a permutation matrix and \( \mathsf S=\operatorname{diag}(s_1,\ldots,s_N)\),
\(s_i\in\{-1,1\}\).
Define
\(
\mathsf G=\mathsf P\mathsf S
\)
and
\(
    \widetilde x=\mathsf G\widehat x.
\)
Let
\(\widetilde K^\pm=\{ \widetilde x  : \widetilde x  =\mathsf G\widehat x, \widehat x\in \widehat K^\pm \}\).
We refer to the cutting configuration
\(\widetilde K^\pm\) as the
\(\mathsf G\)-transformed configuration of
\(\widehat K^\pm\).
For  the
\(\mathsf G\)-transformed configuration, 
\[
    \widetilde w^+=
    \widetilde\ell_K(\widetilde x)
    =
    \widehat\ell_K(\mathsf G^{-1}\widetilde x),
    \quad
    \widetilde w^-=0,\quad 
    \widetilde n_h
    =
   \mathsf G^{-\top}\widehat n_h.\]
Then \(|\widetilde n_h|=1\).  In addition, 
let
\(\widetilde\Pi\) denote the standard \(Q_1\) interpolation operator
in the \(\widetilde x\)-coordinates, and 
suppose that there exists \(\widetilde M_K\in\widehat K\) such that 
\(
    \nabla_{\widetilde x}
    \widetilde\Pi\widetilde w(\widetilde M_K)
    =
    \lambda\widetilde n_h
\) with
\(
    \lambda\in[0,1],
\)
then
\(
    \widehat M_K=\mathsf G^{-1}\widetilde M_K
\)
satisfies the requirement \eqref{key_iden}.
\end{lemma}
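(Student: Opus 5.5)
The proof reduces to a change-of-variables computation for the \(Q_1\) interpolant under the signed permutation \(\mathsf G\). There are three steps: verify \(|\widetilde n_h|=1\), identify \(\widetilde\Pi\widetilde w\) as the pullback of \(\widehat\Pi\widehat w\), and transform gradients by the chain rule.

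\emph{Step 1: the normal.} Since \(\mathsf P\) is a permutation matrix and \(\mathsf S\) is diagonal with entries \(\pm1\), both are orthogonal. Hence \(\mathsf G=\mathsf P\mathsf S\) is orthogonal, so \(\mathsf G^{-\top}=\mathsf G\) and \(|\widetilde n_h|=|\mathsf G\widehat n_h|=|\widehat n_h|=1\). Moreover \(\mathsf G\) maps \(\widehat K=[-1,1]^N\) onto itself and permutes its vertex set \(\widehat{\mathcal N}\). Consequently \(\widetilde M_K\in\widehat K\) implies \(\widehat M_K=\mathsf G^{-1}\widetilde M_K\in\widehat K\).

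\emph{Step 2: the interpolants.} By definition, \(\widetilde w(\widetilde x)=\widehat w(\mathsf G^{-1}\widetilde x)\) on \(\widehat K\). Indeed, \(\widetilde x\in\widetilde K^+\) if and only if \(\mathsf G^{-1}\widetilde x\in\widehat K^+\), and there \(\widetilde w^+=\widehat\ell_K\circ\mathsf G^{-1}\); otherwise both functions vanish. We next show that \(Q_1(\widehat K)\) is invariant under composition with \(\mathsf G^{-1}\). A reflection \(\widehat x_i\mapsto-\widehat x_i\) and a permutation of coordinates both map multilinear monomials to multilinear monomials, up to sign. Thus \(q\circ\mathsf G^{-1}\in Q_1\) whenever \(q\in Q_1\). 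The function \((\widehat\Pi\widehat w)\circ\mathsf G^{-1}\) therefore lies in \(Q_1\). At each vertex \(\widetilde A=\mathsf G\widehat A\) it takes the value \(\widehat w(\widehat A)=\widetilde w(\widetilde A)\). By unisolvence of nodal \(Q_1\) interpolation,
\[
\widetilde\Pi\widetilde w=(\widehat\Pi\widehat w)\circ\mathsf G^{-1}.
\]
This is the step where the specific structure of \(\mathsf G\) matters. A general orthogonal map would neither preserve \(Q_1\) nor permute the vertices, so the argument would fail.

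\emph{Step 3: the chain rule.} Differentiating the identity from Step 2 gives \(\nabla_{\widetilde x}\widetilde\Pi\widetilde w(\widetilde x)=\mathsf G^{-\top}\widehat\nabla\widehat\Pi\widehat w(\mathsf G^{-1}\widetilde x)\). Evaluate at \(\widetilde M_K\) and use the hypothesis:
\[
\mathsf G^{-\top}\widehat\nabla\widehat\Pi\widehat w(\widehat M_K)=\lambda\widetilde n_h=\lambda\mathsf G^{-\top}\widehat n_h.
\]
Multiplying by \(\mathsf G^{\top}\) yields \(\widehat\nabla\widehat\Pi\widehat w(\widehat M_K)=\lambda\widehat n_h\) with \(\lambda\in[0,1]\). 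Together with \(\widehat M_K\in\widehat K\) from Step 1, this is exactly \eqref{key_iden}.

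None of these steps is a real obstacle. The only point needing care is the bookkeeping in Step 2: one must check that \(\widetilde w\) is defined piecewise consistently with \(\widehat w\), so that the nodal values agree, including vertices lying on the discrete interface, where \(\widehat\ell_K(\widehat A_i)=0\) gives \(\widehat w=0\) on either side.
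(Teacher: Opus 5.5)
Your proof is correct and follows the paper's route. Orthogonality of \(\mathsf G\) gives \(|\widetilde n_h|=1\), then the identity \(\widetilde\Pi\widetilde w=(\widehat\Pi\widehat w)\circ\mathsf G^{-1}\) is combined with the chain rule and \(\mathsf G^\top\mathsf G^{-\top}=\mathbb I\). Beyond the paper, you justify the interpolant identity, which the paper only calls easy to verify, via \(Q_1\)-invariance under signed permutations and the permutation of vertices, and you check \(\widehat M_K\in\widehat K\), which the paper leaves implicit.
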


\begin{proof}
The relation \(|\widetilde n_h|=1\) is immediate since
\(|\widehat n_h|=1\) and \(\mathsf G\) is an orthogonal transformation.
It is easy to verify that 
\(
   \widehat\Pi\widehat w(\widehat x)
    =
    \widetilde\Pi\widetilde w(\widetilde x),
    \)
and  
\(\widehat x=\mathsf G^{-1} \widetilde x.
\)
By the chain rule and the fact \( \mathsf G^{-1} =\mathsf G^\top\), we have
\[
    \widehat\nabla\widehat\Pi\widehat w (\widehat M_K)
    =
    \mathsf G^\top
    \nabla_{\widetilde x} \widetilde\Pi\widetilde w(\widetilde M_K)=\lambda\mathsf G^\top\widetilde n_h
    =
    \lambda\mathsf G^\top\mathsf G^{-\top}\widehat n_h=
    \lambda\widehat n_h.
\]
\end{proof}

\begin{lemma}\label{lem:vertex_gradient}
Let \(    \bm\nu=(\nu_1,\ldots,\nu_N)^\top
    \in\widehat {\mathcal N}\) be a vertex of
\(\widehat K=[-1,1]^N\). 
Suppose that \(\widehat\ell_K\) is nonnegative at
\(\bm\nu\) and all its adjacent vertices, namely,
\[
    \widehat\ell_K(\bm\nu)\ge0
    \quad\text{and}\quad
    \widehat\ell_K(\bm\nu^{\langle i\rangle})\ge0,
    \qquad i=1,\ldots,N,
\]
where \(\bm\nu^{\langle i\rangle}\) denotes the vertex adjacent to
\(\bm\nu\) obtained by changing the sign of the \(i\)th component,
then
\(
    \widehat\nabla\widehat\Pi\widehat w(\bm\nu)
    =
    \widehat n_h.
\)
\end{lemma}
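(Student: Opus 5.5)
The plan is to compute the gradient of the multilinear interpolant directly at a vertex. Only the nodal values at \(\bm\nu\) and its \(N\) neighbours \(\bm\nu^{\langle i\rangle}\) should matter.

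First, recall that \(\widehat\Pi\widehat w\in Q_1(\widehat K)\) is linear in each coordinate separately. Restrict \(\widehat\Pi\widehat w\) to the edge through \(\bm\nu\) and \(\bm\nu^{\langle i\rangle}\), that is, vary only \(\widehat x_i\) while keeping the other coordinates equal to those of \(\bm\nu\). On this edge the interpolant is the affine function interpolating its two endpoint values. The edge has length \(2\), and \(\bm\nu^{\langle i\rangle}_i-\nu_i=-2\nu_i\). Hence
\[
\partial_{\widehat x_i}\widehat\Pi\widehat w(\bm\nu)
=\frac{\widehat w(\bm\nu^{\langle i\rangle})-\widehat w(\bm\nu)}{-2\nu_i}.
\]
This is the standard fact that a \(Q_1\) function restricted to an edge is linear, and that the partial derivative along an edge direction at a vertex is the difference quotient along that edge.

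Second, I use the sign hypotheses. By \eqref{def_I_K_class} and Assumption~\ref{ass:compatible_cut}, a vertex with \(\widehat\ell_K\ge0\) lies in \(\mathcal I_{h,K}^+=\mathcal I_K^+\), so it belongs to \(\widehat K^+\). By \eqref{def_w} the value of \(\widehat w\) there is \(\widehat\ell_K\). This holds for \(\bm\nu\) and for all \(\bm\nu^{\langle i\rangle}\). The difference quotient therefore equals
\[
\frac{\widehat\ell_K(\bm\nu^{\langle i\rangle})-\widehat\ell_K(\bm\nu)}{-2\nu_i}.
\]
Since \(\widehat\ell_K\) is affine with gradient \(\widehat n_h\), this is exactly \(\partial_{\widehat x_i}\widehat\ell_K=(\widehat n_h)_i\). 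Collecting the components over \(i=1,\ldots,N\) gives \(\widehat\nabla\widehat\Pi\widehat w(\bm\nu)=\widehat n_h\).

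The only delicate point is to justify the evaluation \(\widehat w(\widehat A)=\widehat\ell_K(\widehat A)\) at vertices. This uses the convention \(\Omega^+\supset\Gamma\) and the vertex-classification assumption. A vertex on which \(\widehat\ell_K=0\) lies in \(\mathcal I_{h,K}^+=\mathcal I_K^+\), so it is assigned to \(\widehat K^+\). Even if it were assigned to \(\widehat K^-\), its value would be \(0=\widehat\ell_K\), so the conclusion is unaffected either way. Apart from this point, the argument is a direct computation.
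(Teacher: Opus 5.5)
Your proof is correct and follows the same route as the paper: on each edge from \(\bm\nu\) to \(\bm\nu^{\langle i\rangle}\), the \(Q_1\) interpolant \(\widehat\Pi\widehat w\) is affine and agrees with \(\widehat\ell_K\) at both endpoints, so \(\partial_{\widehat x_i}\widehat\Pi\widehat w(\bm\nu)=(\widehat n_h)_i\). You also state explicitly that Assumption~\ref{ass:compatible_cut} is what ensures \(\widehat w=\widehat\ell_K\) at these vertices; the paper's proof uses this only implicitly.
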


\begin{proof}
Recalling that
\(
\widehat w(\widehat x)|_{\widehat K^+}=
    \widehat\ell_K(\widehat x)
    =
    (\widehat x-\widehat x_K)\cdot\widehat n_h
\)
and
\(\widehat w|_{\widehat K^-}=0\),
we have
\(
\widehat\Pi\widehat w  = \widehat \ell_K 
\)
 on the edge connecting
\(
\bm\nu\) and \(\bm\nu^{\langle i\rangle}.
\)
Let \(\widehat n_h=(n_1,\ldots,n_{N})^\top\). 
Therefore, we have
\[
\bigl(
\partial_{\widehat x_i}\widehat\Pi\widehat w
\bigr)(\bm\nu)
=\bigl(
\partial_{\widehat x_i}\widehat \ell_K 
\bigr)(\bm\nu)
=
n_i,
\quad i=1,\ldots,N.
\]
This completes the proof.
\end{proof}

On \(\widehat K=[-1,1]^N\), the  standard \(Q_1\) nodal basis functions are
\begin{equation}\label{basis_ex}
    \widehat\phi_{\bm\nu}(\widehat x)
    =
    \prod_{i=1}^{N}
    \widehat\varphi_{\nu_i}(\widehat x_i),
    \quad
    \widehat\varphi_{\nu_i}(t)=\frac12(1+\nu_i t), 
\end{equation}
where  \(\bm\nu=(\nu_1,\ldots,\nu_N)^\top\in\widehat {\mathcal N}\) denotes a vertex of \(\widehat K\).

\subsection{Selection for quadrilateral elements}
\label{subsec_quad_selection}
In this subsection,
\(
    \widehat K=[-1,1]^2,
\)
and 
\(
    \widehat n_h=(n_1,n_2)^\top.
\)
After 
choosing
\(
    s_i=1
\)
if \(n_i\ge 0\),  
\(
    s_i=-1
\)
if \(n_i<0\),
and applying Lemma~\ref{lem:coordinate_symmetry}, it suffices to consider the case
\(
    n_i\geq 0,
\)
\(i=1,2\).
Then, the set \(\mathcal I_{h,K}^+\) defined in
\eqref{def_I_K_class} is upward closed in the sense that if
\(\bm\mu\geq\bm\nu\) componentwise, \(\bm\nu\in\mathcal I_{h,K}^+\), and
\(\bm\mu\in\widehat{\mathcal N}\), then
\(\bm\mu\in\mathcal I_{h,K}^+\).
Indeed, since \(n_i\geq0\) for all \(i\),
\(
    \widehat\ell_K(\bm{\mu})
    =
    \widehat\ell_K(\bm{\nu})
    +
    \widehat n_h\cdot
    (\bm{\mu}-\bm{\nu})
    \geq 0.
\)
Therefore, up to a permutation of coordinates, there are three nontrivial
canonical configurations. Ordered by decreasing cardinality of
\(\mathcal  I_{h,K}^+\), they are illustrated in
Fig.~\ref{fig:quad_flux_selection}.

\begin{figure}[t]
\centering
\begin{tikzpicture}[
    scale=1,
    line cap=round,
    line join=round,
    every node/.style={font=\footnotesize}
]

\begin{scope}[xshift=0cm]

  \draw[thick] (-1,-1) rectangle (1,1);
  \draw[interface] (-1,-0.5)--(-0.5,-1);

  \Neg{(-1,-1)}
  \Pos{(1,-1)}
  \Pos{(-1,1)}
  \Pos{(1,1)}

  \node[above right] at (1,1) {$\widehat M_K$};

\end{scope}

\begin{scope}[xshift=3.2cm]

  \draw[thick] (-1,-1) rectangle (1,1);
  \draw[interface] (-0.2,1)--(0.2,-1);

  \Neg{(-1,-1)}
  \Neg{(-1,1)}
  \Pos{(1,-1)}
  \Pos{(1,1)}

  \fill (-0.2,-1) circle (1.2pt);
  \node[below] at (-0.2,-1) {$\widehat M_K$};

\end{scope}

\begin{scope}[xshift=6.4cm]

  \draw[thick] (-1,-1) rectangle (1,1);
  \draw[interface] (0.5,1)--(1,0.5);

  \Neg{(-1,-1)}
  \Neg{(1,-1)}
  \Neg{(-1,1)}
  \Pos{(1,1)}

  \node[below left] at (-1,-1) {$\widehat M_K$};

\end{scope}

\end{tikzpicture}
\caption{Canonical two-dimensional configurations, ordered by
decreasing number of nonnegative vertices.
Filled circles indicate nonnegative vertices.}
\label{fig:quad_flux_selection}
\end{figure}

\textbf{Case 2D-I: \(\lvert\mathcal  I_{h,K}^+\rvert=3 \).}
As shown in the first configuration in Fig.~\ref{fig:quad_flux_selection}, the vertex \((1,1)^\top\) and its two
adjacent vertices \((1,-1)^\top\) and \((-1,1)^\top\) belong to
\(\mathcal  I_{h,K}^+\). Lemma~\ref{lem:vertex_gradient} therefore gives
\[
    \widehat M_K=(1,1)^\top,
    \qquad
    \widehat\nabla
    \widehat\Pi\widehat w(\widehat M_K)
    =
    \widehat n_h.
\]

\textbf{Case 2D-II: \(\lvert\mathcal  I_{h,K}^+\rvert=2\).}
As shown in the second configuration in Fig.~\ref{fig:quad_flux_selection},
\begin{equation}\label{2d_2}
    \mathcal I_{h,K}^+
    =
    \{(1,-1)^\top,(1,1)^\top\}.
\end{equation}
Let
\(
    \delta:=\widehat\ell_K(1,-1)\geq0.
\)
Since   \((1,1)^\top\in\mathcal  I_{h,K}^+\) and  \((-1,-1)^\top\notin\mathcal  I_{h,K}^+\), by \eqref{def_w}, 
\[
\widehat\ell_K(1,1) =  \widehat\ell_K(1,-1)+2n_2=\delta+2n_2\ge 0,\qquad
\widehat\ell_K(-1,-1)
    =
    \delta-2n_1<0.
\]
Hence,
\(
    0\leq\delta<2n_1.
\)
By \eqref{basis_ex} and the definition of \(\widehat\Pi\), we have 
\[
    \widehat\Pi\widehat w(\widehat x)
    =\frac{\delta}{4}(1+\widehat x_1) (1-\widehat x_2)+\frac{\delta+2n_2}{4}(1+\widehat x_1) (1+\widehat x_2)
    =
    \frac{1+\widehat x_1}{2}
    \left[
        \delta+n_2(1+\widehat x_2)
    \right].
\]
Choose
\[
    \widehat M_K
    =
    \left(
        \frac{\delta}{n_1}-1,-1
    \right)^\top.
\]
 A direct calculation gives
\[
    \widehat\nabla
    \widehat\Pi\widehat w(\widehat M_K)
    =
    \frac{\delta}{2n_1}
(
        n_1, n_2
)^\top.
\]
Note that \(n_1>0\). Indeed, if \(n_1=0\), then
\(\widehat\ell_K\) would be independent of \(\widehat x_1\), and hence
\((-1,-1)^\top\) would also belong to \(\mathcal I^{\geq 0}\), which contradicts
\eqref{2d_2}.
The bound \(0\leq\delta<2n_1\) implies
\(
  \widehat M_K\in\widehat K,
  \)
  and
  \(\lambda=\delta/(2n_1)\in[0,1).
\)

\textbf{Case 2D-III: \(\lvert\mathcal  I_{h,K}^+\rvert=1\).}
Applying Lemma~\ref{lem:coordinate_symmetry} with \(
\mathsf G=\mathsf S
\)  and \(s_1=s_2=-1\), we get the \(\mathsf G\)-transformed configuration which is the complementary configuration of \textbf{Case~2D-I}. 
 By Lemma~\ref{lem:complementarity}, we can select the same point \((1,1)^\top\) as in \textbf{Case~2D-I} for the \(\mathsf G\)-transformed configuration. Finally, mapping the selected point back gives \(\widehat M_K=(-1,-1)^\top\). At this point, \(\widehat\nabla\widehat\Pi\widehat w(\widehat M_K) =0\) and \(\lambda=0\).

\subsection{Selection for hexahedral elements}
\label{subsec_hex_selection}

We now consider
\(
    \widehat K=[-1,1]^3
\)
and
\(
    \widehat n_h=(n_1,n_2,n_3)^\top.
\)
After 
choosing
\(
    s_i=1
\)
if \(n_i\ge 0\),  
\(
    s_i=-1
\)
if \(n_i<0\),
and applying Lemma~\ref{lem:coordinate_symmetry}, it suffices to consider 
\(
    n_i\geq0
\)
for
\( i=1,2,3
\).
Then, the set \(\mathcal  I_{h,K}^+\) defined in \eqref{def_I_K_class} is
upward closed. 
Up to a permutation of the coordinates, five
configurations with \(\lvert\mathcal  I_{h,K}^+\rvert\geq4\), ordered by decreasing
cardinality, are illustrated in Fig.~\ref{fig_3D}, 
 and the corresponding
nonnegative vertex sets \(\mathcal I_{h,K}^+\) are
\begin{equation}\label{eq:hex_canonical_configurations}
\begin{aligned}
\text{\rm(a)}\quad&
\widehat {\mathcal N}\setminus\{(-1,-1,-1)^\top\},
\\
\text{\rm(b)}\quad&
\widehat {\mathcal N}\setminus
\{(-1,-1,-1)^\top, (1,-1,-1)^\top\},
\\
\text{\rm(c)}\quad&
\widehat {\mathcal N}\setminus
\{(-1,-1,-1)^\top,
  (1,-1,-1)^\top,
  (-1,1,-1)^\top\},
\\
\text{\rm(d)}\quad&
\{(1,1,-1)^\top,(1,-1,1)^\top,(-1,1,1)^\top,(1,1,1)^\top\},
\\
\text{\rm(e)}\quad&
\{(1,-1,-1)^\top,(1,1,-1)^\top,(1,-1,1)^\top,(1,1,1)^\top\}.
\end{aligned}
\end{equation}

\begin{figure}[t]
\centering
\begin{tikzpicture}

\begin{scope}[xshift=0cm,
    x={(1.0cm,0cm)},
    y={(0.45cm,0.25cm)},
    z={(0cm,1.0cm)},
    scale=0.67]

  \filldraw[interfaceplane]
  (-1,-1,-0.5)--(-1,-0.5,-1)--(-0.5,-1,-1)--cycle;

  \CubeMinusOneOne
  \Neg{(-1,-1,-1)}
  \Pos{(1,-1,-1)}
  \Pos{(-1,1,-1)}
  \Pos{(-1,-1,1)}
  \Pos{(1,1,-1)}
  \Pos{(1,-1,1)}
  \Pos{(-1,1,1)}
  \Pos{(1,1,1)}

  \node[above] at (1,1,1) {$\widehat M_K$};
\end{scope}

\begin{scope}[xshift=2.65cm,
    x={(1.0cm,0cm)},
    y={(0.45cm,0.25cm)},
    z={(0cm,1.0cm)},
    scale=0.67]

  \filldraw[interfaceplane]
  (-1,0.2,-1)--(1,-0.2,-1)
  --(1,-1,-0.2)--(-1,-1,0.2)--cycle;

  \CubeMinusOneOne
  \Neg{(-1,-1,-1)}
  \Neg{(1,-1,-1)}
  \Pos{(-1,1,-1)}
  \Pos{(-1,-1,1)}
  \Pos{(1,1,-1)}
  \Pos{(1,-1,1)}
  \Pos{(-1,1,1)}
  \Pos{(1,1,1)}

  \node[above] at (1,1,1) {$\widehat M_K$};
\end{scope}

\begin{scope}[xshift=5.3cm,
    x={(1.0cm,0cm)},
    y={(0.45cm,0.25cm)},
    z={(0cm,1.0cm)},
    scale=0.67]

  \filldraw[interfaceplane]
  (-1,-1,0.25)
  --(1,-1,-0.65)
  --(1,0,-1)
  --(0.5,1,-1)
  --(-1,1,-0)
  --cycle;

  \CubeMinusOneOne
  \Neg{(-1,-1,-1)}
  \Neg{(1,-1,-1)}
  \Neg{(-1,1,-1)}
  \Pos{(-1,-1,1)}
  \Pos{(1,1,-1)}
  \Pos{(1,-1,1)}
  \Pos{(-1,1,1)}
  \Pos{(1,1,1)}

  \node[above] at (1,1,1) {$\widehat M_K$};
\end{scope}

\begin{scope}[xshift=7.95cm,
    x={(1.0cm,0cm)},
    y={(0.45cm,0.25cm)},
    z={(0cm,1.0cm)},
    scale=0.67]

  \filldraw[interfaceplane]
  (1,-1,0)--(1,0,-1)--(0,1,-1)
  --(-1,1,0)--(-1,0,1)--(0,-1,1)--cycle;

  \CubeMinusOneOne
  \Neg{(-1,-1,-1)}
  \Neg{(1,-1,-1)}
  \Neg{(-1,1,-1)}
  \Neg{(-1,-1,1)}
  \Pos{(1,1,-1)}
  \Pos{(1,-1,1)}
  \Pos{(-1,1,1)}
  \Pos{(1,1,1)}

  \node[above] at (1,1,1) {$\widehat M_K$};
\end{scope}

\begin{scope}[xshift=10.6cm,
    x={(1.0cm,0cm)},
    y={(0.45cm,0.25cm)},
    z={(0cm,1.0cm)},
    scale=0.67]

  \filldraw[interfaceplane]
  (-0.3,1,1)--(-0.1,1,-1)
  --(0.3,-1,-1)--(0.1,-1,1)--cycle;

  \CubeMinusOneOne
  \Neg{(-1,-1,-1)}
  \Pos{(1,-1,-1)}
  \Neg{(-1,1,-1)}
  \Neg{(-1,-1,1)}
  \Pos{(1,1,-1)}
  \Pos{(1,-1,1)}
  \Neg{(-1,1,1)}
  \Pos{(1,1,1)}

  \fill (-0.3,-1,-1) circle (1.2pt);
  \node[above] at (-0.3,-1,-1) {$\widehat M_K$};
\end{scope}

\end{tikzpicture}
\caption{Canonical three-dimensional configurations with at least four
nonnegative vertices, ordered by decreasing cardinality.
Filled circles indicate nonnegative vertices.}
\label{fig_3D}
\end{figure}

\textbf{Case 3D-I: \(\lvert\mathcal  I_{h,K}^+\rvert\geq5\).}
 As shown in the first three configurations in Fig.~\ref{fig_3D}, 
the vertex \((1,1,1)^\top\) and its three adjacent vertices
\(
    (1,1,-1)^\top,
    (1,-1,1)^\top,
    (-1,1,1)^\top
\)
belong to \(\mathcal  I_{h,K}^+\). Lemma~\ref{lem:vertex_gradient} therefore
gives
\[
    \widehat M_K=(1,1,1)^\top,
    \qquad
    \widehat\nabla
    \widehat\Pi\widehat w(\widehat M_K)
    =
    \widehat n_h.
\]

\textbf{Case 3D-II: \(\lvert\mathcal  I_{h,K}^+\rvert=4\),  Configuration~(d) in \eqref{eq:hex_canonical_configurations}.}
As shown in the  fourth  configuration in Fig.~\ref{fig_3D}, 
the vertex \((1,1,1)^\top\) and all its adjacent vertices belong to
\(\mathcal  I_{h,K}^+\). Hence Lemma~\ref{lem:vertex_gradient} gives
\(
    \widehat M_K=(1,1,1)^\top
\)
and
\( 
    \lambda=1.
\)

\textbf{Case 3D-III: \(\lvert\mathcal  I_{h,K}^+\rvert=4\),  Configuration~(e) in \eqref{eq:hex_canonical_configurations}.}
As shown in the  last  configuration in Fig.~\ref{fig_3D}, 
 the four nonnegative vertices form the face \(\widehat x_1=1\). In this case,
\(n_1>0\). Otherwise, \(\widehat\ell_K\) would be independent of
\(\widehat x_1\), and the corresponding vertices on the opposite face
would also belong to \(\mathcal  I_{h,K}^+\), which contradicts
\(\lvert\mathcal  I_{h,K}^+\rvert=4\).

Let
\(
    \delta:=\widehat\ell_K(1,-1,-1)\geq0.
\)
By \eqref{def_w} and the definition of \(\mathcal  I_{h,K}^+\), we have
\begin{equation}\label{3d_p_value}
\begin{aligned}
    &    \widehat\ell_K(1,1,-1)
    = \delta+2n_2\ge 0,\quad
        \widehat\ell_K(1,-1,1)
    =\delta+2n_3\ge 0,\\
   &        \widehat\ell_K(1,1,1)
    =\delta+2n_2+2n_3\ge 0,\quad
        \widehat\ell_K(-1,-1,-1)
    = \delta-2n_1<0.
 \end{aligned}
\end{equation}
Thus, we have the bound:
\(
    0\leq \delta<2n_1.
\)
By \eqref{basis_ex}, \eqref{3d_p_value} and the definition of \(\widehat\Pi\), we have 
\[
    \widehat\Pi\widehat w(\widehat x)
    =\sum_{\bm{\nu}\in \mathcal  I_{h,K}^+}\widehat\ell_K(\bm{\nu}) \widehat\phi_{\bm{\nu}}(\widehat x)=\frac{1+\widehat x_1}{2}
    \left(
        \delta+n_2(1+\widehat x_2)
         +n_3(1+\widehat x_3)
    \right).
\]
Choose
\[
    \widehat M_K
    =
    \left(
        \frac{\delta}{n_1}-1,-1,-1
    \right)^\top.
\]
 A direct calculation gives
\[
    \widehat\nabla
    \widehat\Pi\widehat w(\widehat M_K)
    =
    \frac{\delta}{2n_1} (n_1,n_2,n_3)^\top.
\]
The bound \(0\leq \delta<2n_1\) implies
 \(
 \widehat M_K\in\widehat K,
 \)
 and
 \(\lambda=\frac{\delta}{2n_1}\in[0,1).
 \)

\textbf{Case 3D-IV: \(\lvert\mathcal  I_{h,K}^+\rvert\leq3\).} 
Applying Lemma~\ref{lem:coordinate_symmetry} with \(\mathsf G=\mathsf S\) and \(s_i=-1, i=1,2,3\), 
we get the \(\mathsf G\)-transformed configuration which is the complementary configuration of \textbf{Case~3D-I}. 
 By Lemma~\ref{lem:complementarity}, we can select the same point \((1,1,1)^\top\) as in \textbf{Case~3D-I} for the \(\mathsf G\)-transformed configuration. Finally, mapping the selected point back gives \(\widehat M_K=(-1,-1,-1)^\top\). At this point, \(\widehat\nabla\widehat\Pi\widehat w(\widehat M_K)=0\) and \(\lambda=0\).

\subsection{Global IFE space}
The global  IFE space is defined by
\[
\begin{aligned}
V_h^{\rm IFE}=\{v_h:\;&v_h|_K\in V^{\rm IFE}(K)
        \,\forall K\in\mathcal K_h^{\Gamma},\, v_h|_K\in V(K)
        \,\forall K\in\mathcal K_h^{\rm non},\\
    &v_h\text{ is single-valued at every mesh vertex}\}.
\end{aligned}
\]
The subspace incorporating the homogeneous Dirichlet condition is
\[
V_{h,0}^{\rm IFE}
=\{v_h\in V_h^{\rm IFE}:v_h(A)=0
\text{ at every vertex }A \in\partial\Omega\}.
\]
Functions in \(V_h^{\rm IFE}\) are continuous across every face not
intersected by the interface, whereas across an interface face they are,
in general, discontinuous.

\section{Approximation Properties of IFE Spaces}\label{sec_approx}
Let \(d(x)|_{\Omega^\pm}=  \pm\operatorname{dist}(x,\Gamma)\) denote the signed distance function associated with \(\Gamma\).
For \(\delta>0\), define the tubular neighborhood
\[
    U(\Gamma,\delta)
    =
    \{x\in\Omega: |d(x)|<\delta\}.
\]
Since \(\Gamma\) is of class \(C^2\), there exists \(\delta_0>0\) such
that \(d\in C^2(U(\Gamma,\delta_0))\). On the interface \(\Gamma\), the unit normal satisfies
\(
    n=\nabla d.
\)
We extend the unit normal field
from \(\Gamma\) to \(U(\Gamma,\delta_0)\) by setting
\[
    n(x)=\nabla d(x),\qquad x\in U(\Gamma,\delta_0).
\]
Then \(n\in C^1(U(\Gamma,\delta_0))^N\) and \(|n|=1\) throughout this
neighborhood. 
We regard \(n_h\) as a piecewise constant vector
field on the interface elements.
Therefore, for each interface element \(K\subset U(\Gamma,\delta_0)\), it follows from
\eqref{est_x_K}  that
\begin{equation}\label{eq:normal_approximation_on_K}
    \|n_h-n\|_{L^\infty(K)}
    \leq
    |n_h-n(x_{\Gamma_K})|
    +
    \|n(x_{\Gamma_K})-n\|_{L^\infty(K)}\leq Ch_K.
\end{equation}

Throughout the paper, \(C\), \(C_1\), and \(C_2\) denote generic positive
constants that are independent of
the mesh size \(h\) and the position of the interface relative to the mesh, but may depend on the diffusion coefficient \(\mathbb{B}(x)\). 

We impose the following assumption on the mesh family.
\begin{assumption}
\label{assump:mesh_regularity}
The mesh family \(\{\mathcal{K}_h\}_{h>0}\) satisfies the following
conditions:
\begin{itemize}
  \item It is shape regular in the sense that 
  \begin{equation}\label{eq:isoparametric_regularity}
      \|J_K\|_{L^\infty(\widehat K)}
      \leq C h_K,
      \qquad
      \|J_K^{-1}\|_{L^\infty(\widehat K)}
      \leq C h_K^{-1}.
  \end{equation}

  \item It is asymptotically affine in the sense that
  \begin{equation}\label{eq:asymptotically_affine_derivatives}
      \|D^2F_K\|_{L^\infty(\widehat K)}
      \leq C h_K^2.
  \end{equation}
\end{itemize}
\end{assumption}

The asymptotically affine condition~\eqref{eq:asymptotically_affine_derivatives} is not specific to the proposed IFE method and is commonly used in the approximation analysis of mapped finite elements whose
reference shape-function spaces are not invariant under general
bilinear or trilinear mappings; see, e.g.,
\cite{92Simple,ArnoldBoffiFalk2002}.
This condition is automatically satisfied by mesh families obtained either
by regularly refining a fixed quadrilateral or hexahedral mesh or by
applying a fixed smooth mapping to Cartesian meshes.

In the present analysis, the asymptotically affine condition
\eqref{eq:asymptotically_affine_derivatives} is used in two places.
First, it enters the derivation of the IFE interpolation error estimates.
More precisely, it controls both the perturbation error arising from the
flux-enforcement condition \eqref{dis_jump_Q1_2} and the
higher-order coefficient term induced by the higher-order coupling condition
\eqref{dis_jump_Q1_3}; see the proof of
Lemma~\ref{lem:local_interpolation}. For the latter term, the chain rule
and \eqref{eq:asymptotically_affine_derivatives} give, with \(x=F_K(\widehat x)\),
\begin{equation}\label{ref_phy_H2}
\begin{aligned}
\bigl|
    \widehat{\nabla}^{\,2}\widehat v(\widehat x)
\bigr|
&\leq
C\lvert J_K(\widehat x)\rvert^2
\bigl|\nabla^2v(x)\bigr|
+
C\lvert D^2F_K(\widehat x)\rvert
\bigl|\nabla v(  x )\bigr| \\
&\leq
C h_K^2
\left(
    \bigl|\nabla^2v(  x )\bigr|
    +
    \bigl|\nabla v( x )\bigr|
\right),
\end{aligned}
\end{equation}
where the second term is generated by the non-affinity of
\(F_K\) and the asymptotically affine condition~\eqref{eq:asymptotically_affine_derivatives} ensures that this
term is also of order \(h_K^2\). 

Second, the asymptotically affine condition
\eqref{eq:asymptotically_affine_derivatives} yields the following
\(O(h_K)\) bound from \(\widehat\Gamma_K\) to
\(\widehat{\mathcal H}_K\), which is used in the consistency analysis;
see Lemma~\ref{lem:exact-interface-jump}.

\begin{lemma}\label{lem:interface_proximity}
Let \(K\in\mathcal K_h^\Gamma\) be an interface element. Recall the
definitions of \(\widehat{\mathcal H}_K\) and \(\widehat\ell_K\) in
\eqref{def_H_ref}. 
Then
\begin{equation}\label{eq:reference_interface_distance}
    \|\widehat\ell_K\|_{L^\infty(\widehat\Gamma_K)}
    \leq Ch_K.
\end{equation}
\end{lemma}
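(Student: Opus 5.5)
We bound \(\widehat\ell_K(\widehat y)\) at an arbitrary \(\widehat y\in\widehat\Gamma_K\) by working in the physical element, where the interface is \(C^2\) and Assumption~\ref{ass:compatible_cut} gives \(O(h_K^2)\) closeness of \(x_K,n_h\) to a point \(x_{\Gamma_K}\) of \(\Gamma_K\) and its normal. Set \(y=F_K(\widehat y)\in\Gamma_K\). Since \(\widehat n_h=J_K(\widehat x_K)^\top n_h/|J_K(\widehat x_K)^\top n_h|\), we have
\[
\widehat\ell_K(\widehat y)=\frac{\bigl(J_K(\widehat x_K)(\widehat y-\widehat x_K)\bigr)\cdot n_h}{|J_K(\widehat x_K)^\top n_h|}.
\]
The proof reduces to bounding the numerator by \(Ch_K^2\) and the denominator from below by \(ch_K\).

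\emph{Denominator.} The shape-regularity bound \(\|J_K^{-1}\|\le Ch_K^{-1}\) in \eqref{eq:isoparametric_regularity} gives \(|J_K^\top n_h|\ge |n_h|/\|J_K^{-\top}\|\ge C^{-1}h_K\), since \(|n_h|=1\).

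\emph{Numerator.} Taylor expansion of \(F_K\) about \(\widehat x_K\) along the segment to \(\widehat y\), which stays in the convex set \(\widehat K\), gives
\[
F_K(\widehat y)-F_K(\widehat x_K)=J_K(\widehat x_K)(\widehat y-\widehat x_K)+R,\qquad |R|\le C\|D^2F_K\|_{L^\infty(\widehat K)}\le Ch_K^2,
\]
by \eqref{eq:asymptotically_affine_derivatives} and \(|\widehat y-\widehat x_K|\le 2\sqrt N\). Hence the numerator equals \((y-x_K)\cdot n_h+O(h_K^2)\). Writing
\[
(y-x_K)\cdot n_h=(y-x_{\Gamma_K})\cdot n(x_{\Gamma_K})+(y-x_{\Gamma_K})\cdot\bigl(n_h-n(x_{\Gamma_K})\bigr)+(x_{\Gamma_K}-x_K)\cdot n_h,
\]
we bound the last two terms by \(Ch_K\cdot Ch_K\) and \(Ch_K^2\) using \eqref{est_x_K} and \(|y-x_{\Gamma_K}|\le h_K\). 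The first term is the standard estimate that a \(C^2\) surface deviates from its tangent plane quadratically: for \(y,x_{\Gamma_K}\in\Gamma\) with \(|y-x_{\Gamma_K}|\le h_K\), we have \(|(y-x_{\Gamma_K})\cdot n(x_{\Gamma_K})|\le C|y-x_{\Gamma_K}|^2\le Ch_K^2\), with \(C\) depending on the curvature bound of \(\Gamma\).

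Dividing the numerator bound \(Ch_K^2\) by the denominator bound \(C^{-1}h_K\) yields \(|\widehat\ell_K(\widehat y)|\le Ch_K\), uniformly in \(\widehat y\in\widehat\Gamma_K\), which proves the lemma.

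The only delicate step is the quadratic tangent-plane estimate. It needs \(h_K\) small, so that \(K\subset U(\Gamma,\delta_0)\) and locally \(\Gamma\) is a \(C^2\) graph over its tangent plane at \(x_{\Gamma_K}\) with uniformly bounded second derivatives. I would prove it by writing \(\Gamma\) near \(x_{\Gamma_K}\) as such a graph and using a second-order Taylor expansion. Equivalently, apply Taylor to the signed distance \(d\in C^2\) on the segment from \(x_{\Gamma_K}\) to \(y\): since \(d(y)=d(x_{\Gamma_K})=0\), this gives \(|\nabla d(x_{\Gamma_K})\cdot(y-x_{\Gamma_K})|\le \tfrac12\|\nabla^2 d\|_\infty|y-x_{\Gamma_K}|^2\), provided the segment lies in \(U(\Gamma,\delta_0)\), which holds for \(h_K\) small. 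For larger \(h_K\) the bound is trivial, because \(|\widehat\ell_K|\le 2\sqrt N\) on \(\widehat K\).
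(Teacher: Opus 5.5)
Your proposal is correct and follows essentially the same route as the paper. You rewrite $\widehat\ell_K$ through $J_K(\widehat x_K)$ and $n_h$, Taylor-expand $F_K$ about $\widehat x_K$ with the remainder controlled by the asymptotically affine condition, and split $(y-x_K)\cdot n_h$ into the tangent-plane term (handled by Taylor expansion of $d$) plus two terms controlled by \eqref{est_x_K}. You then divide by $|J_K(\widehat x_K)^\top n_h| = |J_K(\widehat x_K)^{-\top}\widehat n_h|^{-1}\ge C^{-1}h_K$, which is exactly the factor the paper multiplies by. Your extra remarks — that the segment from $x_{\Gamma_K}$ to $y$ stays in the tubular neighborhood, and that the large-$h_K$ case is trivial — fill in details the paper leaves implicit.
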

\begin{proof}
Let \(\widehat x\in\widehat\Gamma_K\). 
Then 
\(
    x=F_K(\widehat x)\in\Gamma_K.
\)
Let \(x_{\Gamma_K}\in\Gamma_K\) be the point appearing in
\eqref{est_x_K}. Taylor expansion
of \(d(x)\) about \(x_{\Gamma_K}\) gives
\[
    \bigl|
        (x-x_{\Gamma_K})\cdot n(x_{\Gamma_K})
    \bigr|
    \leq C|x-x_{\Gamma_K}|^2
    \leq Ch_K^2.
\]
Consequently, using \eqref{est_x_K}, we obtain
\[
    |(x-x_K)\cdot n_h|
    \leq
    |
        (x-x_{\Gamma_K})\cdot n(x_{\Gamma_K})
    |
    +
    |x-x_{\Gamma_K}|\,
    |n_h-n(x_{\Gamma_K})|
    +
    |x_{\Gamma_K}-x_K|
\leq Ch_K^2.
\]
Since \(x=F_K(\widehat x)\) and
\(x_K=F_K(\widehat x_K)\), a Taylor expansion of \(F_K\) about
\(\widehat x_K\), together with the identity
\[
    J_K(\widehat x_K)^\top  n_h
    =
    \frac{\widehat{n}_h}
    {\bigl|J_K(\widehat x_K)^{-\top}\widehat{n}_h\bigr|},
\]
gives
\[
\begin{aligned}
    (x-x_K)\cdot n_h
    &=
    \bigl[J_K(\widehat x_K)
    (\widehat x-\widehat x_K)\bigr]\cdot n_h
    +R_K(\widehat x)\cdot n_h
    \\
    &=
    \frac{
        (\widehat x-\widehat x_K)\cdot\widehat{n}_h
    }{
        \bigl|J_K(\widehat x_K)^{-\top}
        \widehat{n}_h\bigr|
    }
    +R_K(\widehat x)\cdot n_h .
\end{aligned}
\]
Here \(R_K(\widehat x)\) denotes the Taylor remainder. By the
asymptotically affine  condition
\eqref{eq:asymptotically_affine_derivatives} and the boundedness of
\(\widehat K\), it satisfies
\[
    |R_K(\widehat x)|
    \le
    C\|D^2F_K\|_{L^\infty(\widehat K)}
    |\widehat x-\widehat x_K|^2
    \le Ch_K^2,
    \qquad \widehat x\in\widehat K.
\]
Therefore, 
\[
\begin{aligned}
|\widehat\ell_K(\widehat x)|&=
|(\widehat x-\widehat x_K)\cdot\widehat n_h|\\
&\leq|J_K(\widehat x_K)^{-\top}\widehat n_h|
\left(|(x-x_K)\cdot n_h|+|R_K(\widehat x)\cdot n_h|\right)\leq Ch_K,
\end{aligned}
\]
which completes the proof.
\end{proof}

We also impose the following assumption.
\begin{assumption}\label{assump:mesh_size}
The mesh size \(h\) is sufficiently small that every interface element
\(K\in\mathcal K_h^\Gamma\) satisfies
\(
    K\subset U(\Gamma,\delta_0)
 \)
 and
 \(
    \|\widehat\ell_K\|_{L^\infty(\widehat\Gamma_K)}
    \leq 1/2.
\)
\end{assumption}

For \(v\in H^1(U(\Gamma,\delta_0))\) and $0<\delta\le\delta_0$, we shall use the standard strip estimates  (see, e.g., \cite{huang2002mortar,Li2010Optimal}):
\begin{equation}\label{eq:strip_estimates}
\begin{aligned}
&\|v\|_{L^2(U(\Gamma,\delta))}
\le C\sqrt{\delta}\| v\|_{H^1(U(\Gamma,\delta_0))},\\
&\|v\|_{L^2(U(\Gamma,\delta))}
\le C\delta\|\nabla v\|_{L^2(U(\Gamma,\delta))},
\quad\text{ if }v|_\Gamma=0.
\end{aligned}
\end{equation}


For $v^\pm\in H^2(\Omega^\pm)$, let $v_E^\pm$ denote extensions to
$U(\Gamma,\delta_0)$ satisfying (see, e.g., \cite{Gilbargbook})
\begin{equation}\label{eq:H2_extension}
    \|v_E^\pm\|_{H^2(U(\Gamma,\delta_0))}
    \le C\|v^\pm\|_{H^2(\Omega^\pm)}.
\end{equation}
We assume that \(\mathbb B^\pm\) admit \(C^1\) extensions
\(\mathbb B_E^\pm\) such that 
\begin{equation}\label{eq:coefficient_freezing}
    \|\mathbb B_E^\pm-\mathbb B_{h}^\pm\|_{L^\infty(K)}
    \le Ch_K.
\end{equation}

For any subdomain \(D\subset\Omega\), define the broken space
\begin{equation}\label{def_H2}
\widetilde H^2(D)
=
\left\{
    v\in H^2(D^+\cup D^-):
    \llbracket v^\pm\rrbracket=0,~
    \llbracket \mathbb B^\pm\nabla v^\pm\cdot n\rrbracket=0
    \ \text{on }\Gamma\cap D
\right\}.
\end{equation}
Problem~\eqref{p1} admits a unique solution
\(u\in\widetilde H^2(\Omega)\) satisfying (see, e.g., \cite{bramble1996finite,McLean})
\begin{equation}\label{regular}
    \|u\|_{H^2(\Omega^+\cup\Omega^-)}
    \leq
    C\|f\|_{L^2(\Omega)}.
\end{equation}

For \(v\in\widetilde H^2(\Omega)\), define the extended jumps in
\(U(\Gamma,\delta_0)\) by
\begin{equation}\label{def_extended_transmission_residuals}
    G_0(v)=\llbracket\nabla v_E^\pm\rrbracket,\quad
    G_\tau(v)=(\mathbb{I}-nn^\top)G_0(v),\quad
    G_n(v)=\llbracket
        \mathbb B_E^\pm\nabla v_E^\pm\cdot n
    \rrbracket,
\end{equation}
where \(\mathbb{I}\) is the \(N\times N\) identity matrix.
The interface conditions in \eqref{def_H2} imply
$G_\tau(v)|_\Gamma=0$ and $G_n(v)|_\Gamma=0$, whereas $G_0(v)$ generally has
a nonzero trace on $\Gamma$.
Taking \(\delta=h\) in the strip estimates
\eqref{eq:strip_estimates} and using
Assumption~\ref{assump:mesh_size}, we obtain
\begin{equation}\label{esti_strip}
    \sum_{K\in\mathcal K_h^\Gamma}
    \Big(
        \|G_\tau(v)\|_{L^2(K)}^2
        +
        \|G_n(v)\|_{L^2(K)}^2
    \Big)
    \leq
    Ch^2\|v\|_{H^2(\Omega^+\cup\Omega^-)}^2.
\end{equation}

\subsection{Bounds for IFE basis functions}
We establish uniform bounds for the IFE basis functions on \(K\in\mathcal K_h^\Gamma\).
\begin{lemma}\label{bound_IFE_basis}
Let \(\{\widehat\phi_i^{\rm IFE}\}_{i=1}^{2^N}\) be the IFE basis
functions defined in \eqref{def_IFE_basis}. Then
\[
    \left\|(\widehat\phi_i^{\rm IFE})^s\right\|_
    {W^{m,\infty}(\widehat K)}
    \leq C,
    \qquad
    i=1,\ldots,2^N,\quad
    m=0,1,\quad
    s\in\{+,-\}.
\]
\end{lemma}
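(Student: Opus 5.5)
We use the explicit representation \eqref{phi_hat},
\[
\widehat\phi_i^{\rm IFE}=\widehat\phi_i+\alpha_i(\widehat w-\widehat\Pi\widehat w),
\]
and bound each factor separately. The standard basis functions satisfy $\|\widehat\phi_i\|_{W^{1,\infty}(\widehat K)}\le C$ trivially.

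\textbf{Step 1: bound on $\widehat w^\pm-\widehat\Pi\widehat w$.} Since $\widehat w^+=\widehat\ell_K$ and $\widehat w^-=0$ are affine, with $|\widehat n_h|=1$ and $\widehat x_K\in\widehat K$, both pieces have $W^{1,\infty}(\widehat K)$ norms bounded by a constant depending only on $\operatorname{diam}\widehat K$. The nodal values of $\widehat w$ are likewise bounded, because $|\widehat\ell_K(\widehat A_j)|\le C$. Hence $\|\widehat\Pi\widehat w\|_{W^{1,\infty}(\widehat K)}\le C$ and $\|\widehat w^s-\widehat\Pi\widehat w\|_{W^{1,\infty}(\widehat K)}\le C$ for $s\in\{+,-\}$.

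\textbf{Step 2: bound on $|\alpha_i|$.} This is the main point. By \eqref{fenmu}, which relies on the choice of $\widehat M_K$ from Lemma~\ref{lem_point_req} and Section~\ref{sec_flux_point_selection}, the denominator in \eqref{phi_hat_fenmu} is at least $\min\{1,\beta^-_{\min}/\beta^+_{\max}\}>0$. This bound is independent of the interface position. For the numerator, we scale by $h_K$. Write
\[
\widehat{\mathcal B}^\pm_h=J_K^{-1}\widehat{\mathbb B}^\pm_hJ_K^{-\top}=\widehat v^\top\mathbb B^\pm_h\widehat v\text{-type quantities with }\widehat v=J_K^{-\top}\widehat n_h .
\]
Then
\[
\rho^\pm(\widehat M_K)=(J_K^{-\top}\widehat n_h)^\top\widehat{\mathbb B}^\pm_h(J_K^{-\top}\widehat n_h)\ge\beta^\pm_{\min}|J_K^{-\top}\widehat n_h|^2 .
\]
Using \eqref{eq:isoparametric_regularity}, we have $|J_K^{-\top}\widehat n_h|\ge\|J_K^\top\|^{-1}\ge Ch_K^{-1}$. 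The numerator is bounded by
\[
|\widehat n_h^\top J_K^{-1}\llbracket\widehat{\mathbb B}^\pm_h\rrbracket J_K^{-\top}\widehat\nabla\widehat\phi_i|\le C|J_K^{-\top}\widehat n_h|\,\|J_K^{-1}\|\,|\widehat\nabla\widehat\phi_i|\le C|J_K^{-\top}\widehat n_h|\,h_K^{-1}.
\]
Dividing by $\rho^+$, we obtain
\[
|\alpha_i|\le C\,\frac{h_K^{-1}}{\beta^+_{\min}|J_K^{-\top}\widehat n_h|}\le C,
\]
so the powers of $h_K$ cancel. One must be careful to keep a single factor $|J_K^{-\top}\widehat n_h|$ in the numerator rather than bounding it crudely by $h_K^{-1}$. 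Otherwise the lower bound $|J_K^{-\top}\widehat n_h|\ge Ch_K^{-1}$ from shape regularity would be used twice and the cancellation would be lost.

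\textbf{Step 3: conclusion.} Combining Steps 1 and 2 in \eqref{phi_hat} gives the bound for $m=0,1$ and $s=\pm$. All constants depend only on $\beta^\pm_{\min}$, $\beta^\pm_{\max}$ and the shape-regularity constants. In particular, they are independent of $h_K$ and of the interface location, because $\widehat M_K\in\widehat K$ enters only through the uniform bound \eqref{fenmu} and through evaluations of the bounded quantities in Steps 1–2.
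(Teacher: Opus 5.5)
Your proof is correct and follows the paper's route: you reduce via \eqref{phi_hat} to a uniform bound on $|\alpha_i|$, use \eqref{fenmu} for the denominator, and use the scaling bounds \eqref{eq:isoparametric_regularity} for $\rho^+$ and the numerator; your Step 1 also makes explicit the boundedness of $\widehat w-\widehat\Pi\widehat w$, which the paper leaves implicit. However, the warning at the end of your Step 2 is mistaken: the paper takes exactly the ``crude'' route, bounding the numerator by $Ch_K^{-2}$ (using only the upper bound $\|J_K^{-1}\|\le Ch_K^{-1}$) and $\rho^+$ from below by $C_1h_K^{-2}$, so the lower bound on $|J_K^{-\top}\widehat n_h|$ enters only through $\rho^+$ and no cancellation is lost.
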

\begin{proof}
The standard \(Q_1\) basis functions satisfy
\[
    \|\widehat\phi_i\|_{W^{m,\infty}(\widehat K)}
    \leq C,
    \qquad m=0,1.
\]
Therefore, by the explicit representation \eqref{phi_hat} of the IFE
basis functions, it suffices to establish the uniform bound
\(
    |\alpha_i|\leq C.
\)
By \eqref{frozen_B}, \eqref{B_h_trans}, \eqref{def_betaM}, and \eqref{eq:isoparametric_regularity}, we have
\[
    C_1h_K^{-2}
    \leq
    \rho^\pm
    :=
    \widehat n_h^\top
    \widehat{\mathcal B}_h^\pm
    \widehat n_h
    \leq
    C_2h_K^{-2}.
\]
 Similarly,
\[
    \left|
            \widehat n_h^\top
            \widehat{\mathcal B}_h^\pm
            \widehat\nabla\widehat\phi_i
    \right|
    \leq C|\widehat{\mathcal B}_h^\pm||\widehat\nabla\widehat\phi_i|\leq Ch_K^{-2}.
\]
Moreover, \eqref{fenmu} provides a uniform positive lower bound for the
denominator in the explicit expression \eqref{phi_hat_fenmu} for \(\alpha_i\). Hence,
\(
    |\alpha_i|\leq C.
\)
This completes the proof.
\end{proof}

\subsection{Local IFE interpolation error}
Recall that \(\widehat\Pi\) denotes the standard nodal \(Q_1\)
interpolation operator on the reference element \(\widehat K\), and let
\(\Pi_K\) denote the corresponding interpolation operator on the physical
element \(K\). For \(v\in\widetilde H^2(K)\), define its IFE interpolant
\(\Pi_K^{\rm IFE}v\) by
\[
    \Pi_K^{\rm IFE}v\in V^{\rm IFE}(K),
    \qquad
    (\Pi_K^{\rm IFE}v)(A_i)=v(A_i),
    \quad i=1,\ldots,2^N,
\]
where \(A_i=F_K(\widehat A_i)\).
The IFE interpolant on the reference
element  is defined by
\[
    \widehat\Pi_K^{\rm IFE}\widehat v
    \in\widehat V^{\rm IFE}_K(\widehat K),
    \qquad
    (\widehat\Pi_K^{\rm IFE}\widehat v)(\widehat A_i)
    =
    \widehat v(\widehat A_i),
    \quad i=1,\ldots,2^N.
\]
Let \(\widehat Z_K\) denote the space of piecewise-\(Q_1\) functions
whose values vanish at all vertices:
\[
    \widehat Z_K
    :=
    \left\{
        \widehat z
        \,:\,
        \widehat z|_{\widehat K^\pm}
        =
        \widehat z^\pm,~
        \widehat z^\pm\in Q_1(\widehat K),
      ~
        \widehat z(\widehat A_i)=0,
        ~ \widehat A_i\in \widehat {\mathcal N}
    \right\}.
\]
We emphasize that \(\widehat Z_K\) depends on the physical element \(K\),
since the subdomains
\(
    \widehat K^\pm=F_K^{-1}(K^\pm)
\)
depend on \(K\).

Define the  operator
\(
    \widehat{\mathscr R}_K:
    \widehat Z_K\longrightarrow\mathbb R^{2^N}
\)
by
\begin{equation}\label{def_residual_map}
\begin{aligned}
\widehat{\mathscr R}_K(\widehat z)
=
\Big(&
    \llbracket\widehat z^\pm\rrbracket(\widehat x_K),
    \big\{
        \llbracket
            \widehat\nabla\widehat z^\pm
            \cdot\widehat t_{i,h}
        \rrbracket(\widehat x_K)
    \big\}_{i=1}^{N-1},
    \\
    &h_K^2
    \llbracket
        \widehat{\mathcal B}_h^\pm
        \widehat\nabla\widehat z^\pm
        \cdot\widehat n_h
    \rrbracket(\widehat M_K),
    \big\{
        \llbracket
            c_{\bm\alpha}(\widehat z^\pm)
        \rrbracket
    \big\}_{\bm\alpha\in\mathcal A_N}
\Big).
\end{aligned}
\end{equation}
We then have the following uniform stability result.
\begin{lemma}\label{lem:auxiliary_functions}
The operator
\(
    \widehat{\mathscr R}_K
\)
is an isomorphism. Moreover, 
for every \(\widehat z\in\widehat Z_K\), 
\[
    \|\widehat z^s\|_{W^{m,\infty}(\widehat K)}
    \leq
    C\bigl|\widehat{\mathscr R}_K(\widehat z)\bigr|,
    \quad m=0,1,\quad s\in\{+,-\}.
\]
\end{lemma}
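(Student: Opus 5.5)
Since \(\dim\widehat Z_K=2\cdot 2^N-2^N=2^N\) and the target space has dimension \(1+(N-1)+1+|\mathcal A_N|=N+1+(2^N-N-1)=2^N\), I plan to prove both claims at once by writing down an explicit inverse of \(\widehat{\mathscr R}_K\) and bounding it uniformly. Injectivity alone is quick: if \(\widehat{\mathscr R}_K(\widehat z)=0\), then \(\widehat z\) satisfies \eqref{dis_jump_Q1_1}--\eqref{dis_jump_Q1_3} (the tangential conditions together with the value at \(\widehat x_K\) are equivalent to \eqref{dis_jump_Q1_1}, by the remark following the definition of the local space). So \(\widehat z\in\widehat V_K^{\rm IFE}(\widehat K)\) has zero nodal values, and Lemma~\ref{unisolve_IFE} gives \(\widehat z=0\). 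The real content is the uniform estimate, which I will obtain constructively, following the augmented approach of Section~\ref{subsec_explicit_ife_basis}.

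\textbf{Step 1: the jump.} Fix data \(r=(r_0,\{r_i\}_{i=1}^{N-1},r_{\rm f},\{r_{\bm\alpha}\}_{\bm\alpha\in\mathcal A_N})\) and set \(\widehat q=\widehat z^+-\widehat z^-\in Q_1(\widehat K)\). The data prescribe, with respect to the orthonormal frame \(\{\widehat t_{i,h},\widehat n_h\}\) at \(\widehat x_K\):
\begin{itemize}
\item the value \(\widehat q(\widehat x_K)\);
\item the \(N-1\) tangential derivatives of \(\widehat q\) at \(\widehat x_K\);
\item all coefficients \(c_{\bm\alpha}(\widehat q)\), \(\bm\alpha\in\mathcal A_N\).
\end{itemize}
The only free quantity is the normal derivative \(a:=\widehat\nabla\widehat q(\widehat x_K)\cdot\widehat n_h\). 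Expanding \(\widehat q\) about \(\widehat x_K\), I will write \(\widehat q=\widehat q_0+a\,\widehat\ell_K\). Here \(\widehat q_0\in Q_1(\widehat K)\) is determined linearly by \(r_0,r_i,r_{\bm\alpha}\). Because \(\widehat x_K\) lies in the bounded set \(\widehat K\) and the frame is orthonormal, \(\|\widehat q_0\|_{W^{1,\infty}(\widehat K)}\le C|r|\), and a fortiori the coefficients of \(\widehat q_0\) are bounded by \(C|r|\).

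\textbf{Step 2: vanishing nodal values.} By Assumption~\ref{ass:compatible_cut}, the vertex classification for \(\widehat K^\pm\) coincides with that for \(\widehat\ell_K\). Let \(\chi\) be the indicator of \(\widehat K^+\). The candidate is
\[
\widehat z=\widehat q\,\chi-\widehat\Pi(\widehat q\,\chi)
=\bigl(\widehat q_0\chi-\widehat\Pi(\widehat q_0\chi)\bigr)+a\,(\widehat w-\widehat\Pi\widehat w).
\]
This is piecewise \(Q_1\), vanishes at every vertex, and has jump \(\widehat q\). Moreover, \(\widehat z^-=-\widehat\Pi(\widehat q\chi)\) and \(\widehat z^+=\widehat q-\widehat\Pi(\widehat q\chi)\) are polynomials on all of \(\widehat K\). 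Since nodal values of \(\widehat q_0\) are bounded by \(C|r|\), the first bracket has \(W^{1,\infty}\) norm at most \(C|r|\) on each side. Likewise, \(\widehat w^\pm-\widehat\Pi\widehat w\) are bounded in \(W^{1,\infty}(\widehat K)\), since \(|\widehat\ell_K|\le C\) on the vertices.

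\textbf{Step 3: the flux component (main obstacle).} It remains to show that the scaled flux equation determines \(a\) with \(|a|\le C|r|\), uniformly in the cut and in \(K\). Imposing the flux component gives, exactly as in the derivation of \eqref{sol_alpha},
\[
h_K^2\rho^+\mathcal D(\widehat M_K)\,a
= r_{\rm f}-h_K^2\llbracket\widehat{\mathcal B}_h^\pm\widehat\nabla(\widehat q_0\chi-\widehat\Pi(\widehat q_0\chi))^\pm\cdot\widehat n_h\rrbracket(\widehat M_K).
\]
By \eqref{fenmu}, \(\mathcal D(\widehat M_K)\ge\min\{1,\beta^-_{\min}/\beta^+_{\max}\}\). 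As in the proof of Lemma~\ref{bound_IFE_basis}, \(\rho^+\ge C_1h_K^{-2}\) and \(|\widehat{\mathcal B}^\pm_h|\le Ch_K^{-2}\). Hence the left-hand coefficient is bounded below uniformly and the right-hand side is at most \(C|r|\), so \(|a|\le C|r|\). This step hinges on the choice of \(\widehat M_K\) from Lemma~\ref{lem_point_req} and on the \(h_K^2\) scaling in \eqref{def_residual_map}.

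\textbf{Conclusion.} The construction defines a linear right inverse of \(\widehat{\mathscr R}_K\), so \(\widehat{\mathscr R}_K\) is surjective. With the dimension count (or the injectivity above) it is an isomorphism, and its inverse is the construction just given. Combining Steps 1--3 yields \(\|\widehat z^s\|_{W^{m,\infty}(\widehat K)}\le C|\widehat{\mathscr R}_K(\widehat z)|\) for \(m=0,1\) and \(s\in\{+,-\}\).
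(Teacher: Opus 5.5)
Your proof is correct, but it builds $\widehat{\mathscr R}_K^{-1}$ along a different route from the paper's.

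\textbf{The paper's route.} It first constructs a particular piecewise function $\widehat p$ with $\widehat p^-=0$ and $\widehat p^+\in Q_1(\widehat K)$ carrying all the residual data. The flux condition for $\widehat p^+$ alone requires only dividing by $\rho^+\ge C_1h_K^{-2}$, so nothing about $\widehat M_K$ enters at that stage. It then removes the nodal values by setting $\widehat z=\widehat p-\widehat\Pi_K^{\rm IFE}\widehat p$, which leaves the residual unchanged. The bound follows from Lemma~\ref{bound_IFE_basis}, inside which the lower bound \eqref{fenmu} on $\mathcal D(\widehat M_K)$ is hidden.

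\textbf{Your route.} You make the nodal values vanish structurally, through the identity $\widehat z=\widehat q\chi-\widehat\Pi(\widehat q\chi)$ for the jump $\widehat q$. This identity is the inhomogeneous-jump version of \eqref{deco_phi}. It leaves a single scalar unknown $a$, whose coefficient in the flux equation is exactly $h_K^2\rho^+\mathcal D(\widehat M_K)$.

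\textbf{What each buys.} Your argument is self-contained and gives an explicit formula for the inverse. It proves surjectivity directly, so together with the dimension count the appeal to Lemma~\ref{unisolve_IFE} becomes optional. It also shows transparently that the stability constant is governed by $\mathcal D(\widehat M_K)$. The paper's argument is shorter because it reuses Lemma~\ref{bound_IFE_basis} and confines the flux-point issue to that earlier lemma.

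\textbf{One phrasing point.} The value and tangential conditions at $\widehat x_K$ are equivalent to \eqref{dis_jump_Q1_1} only in the presence of \eqref{dis_jump_Q1_3}, which makes the jump affine. Your injectivity argument is nevertheless valid, since the kernel condition includes the $c_{\bm\alpha}$-components.
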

\begin{proof}
If \(\widehat z\in\ker\widehat{\mathscr R}_K\), then \(\widehat z\)
satisfies all the local IFE coupling conditions~\eqref{dis_jump_Q1_1}-\eqref{dis_jump_Q1_3} and vanishes at every
vertex. Hence, Lemma~\ref{unisolve_IFE} implies \(\widehat z=0\). Since
\(
    \dim\widehat Z_K=2^N
    =
    \dim\mathbb R^{2^N},
\)
the operator \(\widehat{\mathscr R}_K\) is an isomorphism.

To prove the uniform stability estimate, we let
\[
\bm r = 
\bigl(r_0,\{r_i\}_{i=1}^{N-1},r_n,
          \{r_{\bm\alpha}\}_{\bm\alpha\in\mathcal A_N}\bigr):= \widehat{\mathscr R}_K(\widehat z)
\]
and construct \(\widehat p^+\in Q_1(\widehat K)\) such that
\[
\begin{aligned}
  & \widehat p^+(\widehat x_K)=r_0,\quad
&&(\widehat \nabla \widehat p^+
            \cdot\widehat t_{i,h}
        )(\widehat x_K)=r_i,\quad i=1,\ldots,N-1,\\
    &h_K^2
   (
        \widehat{\mathcal B}_h^+
       \widehat \nabla \widehat p^+
        \cdot\widehat n_h
   )(\widehat M_K)=r_n,\quad
            &&c_{\bm\alpha}(\widehat p^+)=r_{\bm\alpha},\quad \bm\alpha\in\mathcal A_N.
\end{aligned}
\]
Since \(\widehat p^+\in Q_1(\widehat K)\), it can be written as 
\begin{equation}\label{def_p_plus}
  \widehat p^+(\widehat{\bm x})
  =a_0+\bm a\cdot\widehat{ x}+\widehat H(\widehat{ x}),
  \qquad
  \widehat H(\widehat{ x})
  :=\sum_{\bm\alpha\in\mathcal A_N}
        r_{\bm\alpha}\widehat{ x}^{\bm\alpha}.
\end{equation}
Obviously, the high-order term satisfies 
\begin{equation}\label{esti_H}
  \|\widehat H\|_{W^{m,\infty}(\widehat K)}
  \le C|\bm r|,
  \quad m=0,1,
\end{equation}
where  \(C\) is independent of \(h\) and the position of the interface.
Using the tangential derivative condition, we have
\begin{equation}\label{decom_a}
 |\bm a\cdot\widehat{ t}_{i,h}|
  =|r_i-\widehat\nabla\widehat H(\widehat x_K)
          \cdot\widehat{ t}_{i,h}|\leq C|\bm r|.
\end{equation}
Similarly, the normal derivative condition gives 
\begin{equation}\label{nor_deri}
 \bm a\cdot\widehat{n}_{h}
   =(\nabla \widehat p^+\cdot \widehat n_h)(\widehat M_K)-\widehat\nabla\widehat H(\widehat M_K)
          \cdot\widehat{n}_{h}.
 \end{equation}
Next, we estimate the first term on the right-hand side of the above equation.
Decomposing 
\(\widehat \nabla \widehat p^+\)
into its normal and tangential components yields 
\[
\widehat{\mathcal B}_h^+
       \widehat \nabla \widehat p^+
        \cdot\widehat n_h=\widehat n_h^\top\widehat{\mathcal B}_h^+\widehat n_h (\nabla \widehat p^+\cdot \widehat n_h
        )+
        \sum_{i=1}^{N-1}  \widehat n_{h}^\top   \widehat{\mathcal{B}}_{h}^+ \widehat t_{i,h}  (\nabla   \widehat p^+ \cdot\widehat  t_{i,h}).
\]
Therefore, at the point \(\widehat M_K\), we have
\[
\nabla \widehat p^+\cdot \widehat n_h=
(\widehat n_h^\top\widehat{\mathcal B}_h^+\widehat n_h)^{-1}
\Big(
h_K^{-2}r_n
        -\sum_{i=1}^{N-1}  \widehat n_{h}^\top   \widehat{\mathcal{B}}_{h}^+ \widehat t_{i,h}  
        (\bm a+\widehat\nabla\widehat H) \cdot\widehat  t_{i,h}\Big).
\]
 It follows from \eqref{frozen_B}, \eqref{B_h_trans}, \eqref{def_betaM}, \eqref{eq:isoparametric_regularity}, \eqref{esti_H} and \eqref{decom_a} that
 \[
 |(\nabla \widehat p^+\cdot \widehat n_h)(\widehat M_K)|\leq C|\bm r|,
 \]
 which, together with 
 \eqref{esti_H}, \eqref{decom_a} and \eqref{nor_deri}, yields 
\begin{equation}\label{esti_a}
\|\bm a\cdot\widehat{ x}\|_{W^{m,\infty}(\widehat K)}\leq C\sum_{i=1}^{N-1}| \bm a\cdot\widehat{ t}_{i,h}|+C|\bm a\cdot\widehat{n}_{h}|\leq C|\bm r|.
\end{equation}
The value condition at $\widehat x_K$ determines the constant term:
\begin{equation}\label{est_a_0}
  |a_0|=|r_0-\bm a\cdot\widehat x_K-\widehat H(\widehat x_K)|\leq C|\bm r|.
\end{equation}
Combining  \eqref{def_p_plus}, \eqref{decom_a}, \eqref{esti_a} and \eqref{est_a_0} yields 
\begin{equation}\label{eq:p_stability}
    \|\widehat p^+\|_{W^{m,\infty}(\widehat K)}
    \leq
    C|\bm r|
    \leq
    C\bigl|\widehat{\mathscr R}_K(\widehat z)\bigr|,
    \qquad m=0,1,
\end{equation}
where \(C\) is independent of \(h\) and the position of the interface.

Set \(\widehat p^-=0\) and  define \(\widehat p|_{\widehat K^\pm}=\widehat p^\pm\). Then, it is easy to verify that 
\[
    \widehat z
    =
    \widehat p-\widehat\Pi_K^{\rm IFE}\widehat p=\widehat p-
    \sum_{i=1}^{2^N}
    \widehat p(\widehat A_i)
    \widehat\phi_i^{\rm IFE}.
\]
Lemma~\ref{bound_IFE_basis} and \eqref{eq:p_stability} yield the desired result 
\[
    \|\widehat z^s\|_{W^{m,\infty}(\widehat K)}
    \leq
    \|\widehat p^+\|_{W^{m,\infty}(\widehat K)}
    +
    C\sum_{i=1}^{2^N}
    |\widehat p(\widehat A_i)|  
    \leq
    C\bigl|
        \widehat{\mathscr R}_K(\widehat z)
    \bigl|.
\]
\end{proof}

\begin{lemma}[Local interpolation estimate]\label{lem:local_interpolation}
Let $K\in\mathcal K_h^\Gamma$ and $v\in\widetilde H^2(\Omega)$.  For
$m=0,1$ and $s\in\{+,-\}$, the following estimate holds:
\begin{equation}\label{eq:local_interpolation}
\begin{aligned}
\left|v_E^s-(\Pi_K^{\rm IFE}v)^s\right|_{H^m(K)}
\le{}&C h_K^{2-m}
\sum_{s=\pm}\|v_E^s\|_{H^2(K)}\\
&+C h_K^{1-m}
\left(\|G_\tau(v)\|_{L^2(K)}
+\|G_n(v)\|_{L^2(K)}\right).
\end{aligned}
\end{equation}
\end{lemma}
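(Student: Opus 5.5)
We work on the reference element and rely on Lemma~\ref{lem:auxiliary_functions} as the stability tool. Let $\widehat v_E^\pm=v_E^\pm\circ F_K$, and let $\widehat I^\pm\in Q_1(\widehat K)$ be the standard $Q_1$ interpolants $\widehat I^\pm=\widehat\Pi\widehat v_E^\pm$ of the two extensions. Define the piecewise function $\widehat p|_{\widehat K^\pm}=\widehat I^\pm$. Then $\widehat p(\widehat A_i)=\widehat v(\widehat A_i)$ at every vertex, because each vertex lies in the subdomain whose extension coincides with $v$ there. Consequently
\[
\widehat z:=\widehat p-\widehat\Pi_K^{\rm IFE}\widehat v\in\widehat Z_K .
\]
This gives the splitting
\[
\widehat v_E^s-(\widehat\Pi_K^{\rm IFE}\widehat v)^s=(\widehat v_E^s-\widehat I^s)+\widehat z^s .
\]
The first term is handled by the standard isoparametric $Q_1$ interpolation estimate under Assumption~\ref{assump:mesh_regularity}: $|v_E^s-\Pi_K v_E^s|_{H^m(K)}\le Ch_K^{2-m}\|v_E^s\|_{H^2(K)}$. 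For the second term, Lemma~\ref{lem:auxiliary_functions} gives $\|\widehat z^s\|_{W^{m,\infty}(\widehat K)}\le C|\widehat{\mathscr R}_K(\widehat z)|$. Scaling back to $K$ with $|K|^{1/2}\sim h_K^{N/2}$ and \eqref{eq:isoparametric_regularity} gives $|z^s|_{H^m(K)}\le Ch_K^{N/2-m}|\widehat{\mathscr R}_K(\widehat z)|$. It therefore suffices to show
\[
|\widehat{\mathscr R}_K(\widehat z)|\le Ch_K^{2-N/2}\sum_\pm\|v_E^\pm\|_{H^2(K)}+Ch_K^{1-N/2}\bigl(\|G_\tau\|_{L^2(K)}+\|G_n\|_{L^2(K)}\bigr).
\]

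Since $\widehat\Pi_K^{\rm IFE}\widehat v$ satisfies \eqref{dis_jump_Q1}, its contribution to $\widehat{\mathscr R}_K$ vanishes, so $\widehat{\mathscr R}_K(\widehat z)=\widehat{\mathscr R}_K(\widehat p)$ and we bound each component using the jumps of $\widehat I^\pm$.

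\textbf{Higher-order coefficients.} The coefficient $c_{\bm\alpha}(\widehat I^\pm)$ is a vertex difference quotient, i.e.\ an average of $\widehat\partial^{\bm\alpha}\widehat v_E^\pm$. For $N=2$ it is bounded by $\|\widehat\nabla^2\widehat v_E^\pm\|_{L^2(\widehat K)}$. With \eqref{ref_phy_H2} this gives $Ch_K^{2-N/2}\|v_E^\pm\|_{H^2(K)}$. For $N=3$ the mixed third derivative is not in $H^2$, so the trilinear coefficient needs care. One option is to bound it through edge and face averages of second derivatives using traces. Alternatively, note that the $\bm\alpha$ components enter Lemma~\ref{lem:auxiliary_functions} only through $\widehat H$, so one can subtract a suitable averaged multilinear part instead. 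I would try the first option first.

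\textbf{Value and tangential jumps at $\widehat x_K$.} Write $\llbracket\widehat I^\pm\rrbracket=\widehat\Pi\llbracket\widehat v_E^\pm\rrbracket$, an interpolant of $\llbracket v_E\rrbracket$. This function vanishes on $\Gamma$, its gradient $G_0$ has tangential part $G_\tau$, and it is $O(h_K)$-close to $\widehat{\mathcal H}_K$ by Lemma~\ref{lem:interface_proximity}. Use a Taylor expansion of $\llbracket\widehat v_E\rrbracket$ about $\widehat\Gamma_K$, together with the fact that $\widehat\ell_K$ is $O(h_K)$ on $\widehat\Gamma_K$. The value jump is then controlled by $h_K$ times the reference gradient jump plus second derivatives. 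The tangential jump is controlled by $G_\tau$, in reference scaling $h_K\|G_\tau\|_{L^\infty}$-like quantities, which are turned into $L^2$ bounds via inverse/Bramble–Hilbert arguments on $\widehat K$ (the discrete quantities are $Q_1$). Here I would also use $\|n_h-n\|_{L^\infty(K)}\le Ch_K$ from \eqref{eq:normal_approximation_on_K}. The error from approximating tangent directions by $\widehat t_{i,h}$ produces $h_K|G_0|$, which is absorbed into the $H^2$ term.

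\textbf{Flux jump at $\widehat M_K$ (main obstacle).} $\widehat M_K$ is not on the interface, but it lies in $\widehat K$, hence within $O(h_K)$ physically. Write
\[
h_K^2\llbracket\widehat{\mathcal B}_h\widehat\nabla\widehat I\cdot\widehat n_h\rrbracket(\widehat M_K)
\]
as $h_K^2$ times a physical flux jump, up to the factor $|J_K^{-\top}\widehat n_h|$ and $J_K$ variations bounded by \eqref{eq:asymptotically_affine_derivatives}. Then compare with $G_n(v)$ evaluated near $M_K$, using three replacements:
\begin{itemize}
\item $\mathbb B_h$ by $\mathbb B_E$, via \eqref{eq:coefficient_freezing};
\item $n_h$ (and $J_K(\widehat M_K)^\top n_h$ versus the pushed $\widehat n_h$ at $\widehat x_K$) by $n$, via \eqref{eq:normal_approximation_on_K} and the $D^2F_K$ bound;
\item $\nabla\Pi_K v_E$ by $\nabla v_E$, via interpolation.
\end{itemize}
Each replacement costs $h_K|\nabla v_E|+h_K|\nabla^2 v_E|$-type terms. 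The pointwise-to-$L^2$ passage is delicate because $v_E\in H^2$ only. I would first average over $K$ (the $Q_1$ gradient is controlled by averages) rather than evaluate pointwise, giving $h_K^{-N/2}\|G_n\|_{L^2(K)}+h_K^{1-N/2}\|v_E\|_{H^2(K)}$ after the scaling factors. The bookkeeping of powers of $h_K$ through $J_K$ and the non-constancy of $J_K$ at $\widehat M_K$ versus $\widehat x_K$ is where I expect the main effort.

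Collecting the three component bounds into the scaled estimate yields \eqref{eq:local_interpolation}.
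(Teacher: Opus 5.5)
Your decomposition is the same as the paper's. The paper also sets $q^\pm=\Pi_K v_E^\pm$, glued along the exact subdomains. It uses $\widehat{\mathscr R}_K(\widehat q-\widehat\Pi_K^{\rm IFE}\widehat v)=\widehat{\mathscr R}_K(\widehat q)$ together with Lemma~\ref{lem:auxiliary_functions} and the factor $h_K^{N/2-m}$, and bounds the residual componentwise. Your plan for the flux component is also the paper's: pass to physical variables, use $|J_K(\widehat x_K)J_K^{-1}(\widehat x)-\mathbb I|\le Ch_K$ from \eqref{eq:asymptotically_affine_derivatives}, apply an inverse estimate to pass to $L^2(K)$, and split off $G_n(v)$ via \eqref{eq:coefficient_freezing} and \eqref{eq:normal_approximation_on_K}.

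\textbf{The gap.} You leave the higher-order coefficient component for $N=3$ open, and the route you would try first cannot be closed. Among $\bm\alpha\in\mathcal A_N$, only $\bm\alpha=(1,\ldots,1)$ makes $c_{\bm\alpha}(\widehat\Pi\widehat v)$ a volume average of $\widehat\partial^{\bm\alpha}\widehat v$. In three dimensions, for example, $c_{(1,1,0)}$ and $c_{(1,1,1)}$ are half the sum and half the difference of the averages of $\widehat\partial_1\widehat\partial_2\widehat v$ over the faces $\widehat x_3=\pm1$. Equivalently, $c_{(1,1,1)}$ is a signed sum of edge integrals of $\widehat\partial_3\widehat v$. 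For $\widehat v\in H^2(\widehat K)$, second derivatives lie only in $L^2$ and first derivatives only in $H^1$. Their traces on faces, respectively edges, are therefore undefined, so a bound ``through edge and face averages of second derivatives using traces'' is not available. The same objection applies to the 3D bilinear coefficients, which you implicitly treat as in the 2D case.

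\textbf{The missing idea.} Never differentiate $\widehat v_E^s$ more than twice; work with the coefficient of the discrete function $\widehat q^s=\widehat\Pi\widehat v_E^s\in Q_1(\widehat K)$. The seminorm $|\cdot|_{H^2(\widehat K)}$ is a norm on the finite-dimensional quotient $Q_1(\widehat K)/P_1(\widehat K)$, since a $Q_1$ function whose second derivatives vanish is affine. Hence $|c_{\bm\alpha}(\widehat q^s)|\le C|\widehat q^s|_{H^2(\widehat K)}$ for all $\bm\alpha\in\mathcal A_N$. The paper then applies \eqref{ref_phy_H2} to $q^s$ and uses the $H^2$-stability $\|q^s\|_{H^2(K)}\le C\|v_E^s\|_{H^2(K)}$ of the isoparametric interpolant. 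This gives $Ch_K^{2-N/2}\|v_E^s\|_{H^2(K)}$ uniformly for $N=2,3$. Equivalently, apply Bramble--Hilbert to the functional $\widehat v\mapsto c_{\bm\alpha}(\widehat\Pi\widehat v)$, which is bounded on $H^2(\widehat K)\hookrightarrow C^0(\widehat K)$ and vanishes on $P_1(\widehat K)$.

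\textbf{Value component.} The same principle should replace your Taylor expansion of $\llbracket\widehat v_E^\pm\rrbracket$, which is only $H^2$: work with the polynomial $\widehat q$ and compare with $v_E$ only at one point or in $L^2(K)$. The relevant proximity is $|x_K-x_{\Gamma_K}|\le Ch_K^2$ from \eqref{est_x_K}. Lemma~\ref{lem:interface_proximity} is the wrong tool here, since it bounds the distance from points of $\widehat\Gamma_K$ to $\widehat{\mathcal H}_K$, which is the wrong direction. Move $\llbracket\widehat q^\pm\rrbracket$ from $\widehat x_K$ to $\widehat x_{\Gamma_K}$ using the mean value theorem and an inverse estimate. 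Then use $\llbracket v_E^\pm\rrbracket(x_{\Gamma_K})=0$ and the $L^\infty$ interpolation error.
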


\begin{proof}
Set \(
q^s:=\Pi_K v_E^s.
\)
Under Assumption~\ref{assump:mesh_regularity}, the following estimates hold:
\begin{equation}\label{standard_Q1_local_bounds}
\begin{aligned}
&h_K^j\|v_E^s-q^s\|_{H^j(K)}+ h_K^{N/2}\|v_E^s-q^s\|_{L^\infty(K)}\leq Ch_K^2\|v_E^s\|_{H^2(K)},\quad j=0,1,2,\\
&\|q^s\|_{H^1(K)}\leq C\|v_E^s\|_{H^2(K)}.
\end{aligned}
\end{equation}
Define a function \(q\) by 
\(
    q|_{K^\pm}=q^\pm.
\)
Set 
$\eta:=q -\Pi_K^{\rm IFE}v$. Then
\begin{equation}\label{pro_inter0}
\begin{aligned}
\left|v_E^s-(\Pi_K^{\rm IFE}v)^s\right|_{H^m(K)}&\leq \left|v_E^s-q^s\right|_{H^m(K)}+\left|q^s-(\Pi_K^{\rm IFE}v)^s\right|_{H^m(K)}\\
&\leq Ch_K^{2-m}\|v_E^s\|_{H^2(K)}+|\eta^s|_{H^m(K)}.
\end{aligned}
\end{equation}
By Assumption~\ref{ass:compatible_cut},
\(\Gamma_{h,K}\) and \(\Gamma_K\) induce the same partition of the
vertices of \(K\). Consequently, \(q\) and
\(\Pi_K^{\rm IFE}v\) have identical nodal values, and hence the
pullback \(\widehat\eta\) belongs to \(\widehat Z_K\).
Since
\(\widehat{\mathscr R}_K
(\widehat\Pi_K^{\rm IFE}\widehat v)=0\),  we have
\begin{equation}\label{eq:eta_residual_identity}
    \widehat{\mathscr R}_K(\widehat\eta)
    =\widehat{\mathscr R}_K(\widehat q-\widehat \Pi_K^{\rm IFE}\widehat v)=\widehat{\mathscr R}_K(\widehat q).
\end{equation}
Pulling \(\eta^s\) back to the reference element and using Lemma~\ref{lem:auxiliary_functions}, we obtain
\begin{equation}\label{pro_inter1}
|\eta^s|_{H^m(K)}\leq Ch_K^{N/2-m}\|\widehat \eta^s\|_{W^{m,\infty}(\widehat K)}\leq Ch_K^{N/2-m}\left|\widehat{\mathscr R}_K(\widehat q)\right|.
\end{equation}
We estimate the components of the residual \(\widehat{\mathscr R}_K(\widehat q)\) separately.

\textbf{Estimate of the value component.}
Since $\llbracket v_E^\pm\rrbracket(x_{\Gamma_K})=0$, we have
\[
|\llbracket \widehat q^\pm\rrbracket(\widehat x_K)|=|\llbracket  q^\pm\rrbracket( x_K)|
\le |\llbracket q^\pm\rrbracket(x_K)
      -\llbracket q^\pm\rrbracket(x_{\Gamma_K})|
+\sum_{s=\pm}|q^s(x_{\Gamma_K})-v_E^s(x_{\Gamma_K})|.
\]
Because $|x_K-x_{\Gamma_K}|\le Ch_K^2$, the first term is
bounded by
\[
 Ch_K^2\|\nabla\llbracket q^\pm\rrbracket\|_{L^\infty(K)}
 \le Ch_K^{2-N/2}\sum_{s=\pm}\|q^s\|_{H^1(K)}.
\]
Combining the above estimates and
\eqref{standard_Q1_local_bounds}
 yields
\begin{equation}\label{eq:value_residual_estimate}
 |\llbracket \widehat q \rrbracket(\widehat x_K)|
 \le Ch_K^{2-N/2}\sum_{s=\pm}\|v_E^s\|_{H^2(K)}.
\end{equation}

\textbf{Estimate of the tangential components.}
Let
\(
 t_{i,h}=
 J_K(\widehat x_K)\widehat t_{i,h}/
 |J_K(\widehat x_K)\widehat t_{i,h}|.
\)
Then 
\[
\left|\llbracket\widehat\nabla\widehat q^\pm
\cdot\widehat t_{i,h}\rrbracket(\widehat x_K)\right|
=
\left|J_K(\widehat x_K)\widehat t_{i,h}\right| \left| \llbracket\nabla q^\pm\rrbracket(x_K)\cdot t_{i,h}\right|\leq Ch_K
\left| \llbracket\nabla q^\pm\rrbracket(x_K)\cdot t_{i,h}\right|.
\]
Using the inverse estimate on the whole element, we have
\[
\begin{aligned}
\left|
\llbracket\widehat\nabla\widehat q^\pm
\cdot\widehat t_{i,h}\rrbracket(\widehat x_K)
\right|
&\le Ch_K^{1-N/2}
\|\llbracket\nabla q^\pm\rrbracket\cdot t_{i,h}\|_{L^2(K)}\\
&\leq Ch_K^{1-N/2}\left(\|\llbracket\nabla (q^\pm-v_E^\pm)\rrbracket\cdot t_{i,h}\|_{L^2(K)}+\|\llbracket \nabla v_E^\pm\rrbracket\cdot t_{i,h}\|_{L^2(K)}
\right).
\end{aligned}
\]
By \eqref{def_extended_transmission_residuals}, the relation $t_{i,h}\cdot n_{h}=0$, and  the estimate 
\eqref{eq:normal_approximation_on_K}, we further get
\[
\begin{aligned}
\left|\llbracket \nabla v_E^\pm\rrbracket\cdot t_{i,h}\right|&=
|G_0(v)\cdot t_{i,h}|
\le |G_\tau(v)|
 +|n^\top G_0(v) ((n-n_h)\cdot t_{i,h}+n_h\cdot t_{i,h})|\\
&\le |G_\tau(v)|
 +Ch_K\bigl(|\nabla v_E^+|+|\nabla v_E^-|\bigr).
\end{aligned}
\]
Combining the above estimates and 
\eqref{standard_Q1_local_bounds} yields
\begin{equation}\label{eq:tangential_residual_estimate}
\left|\llbracket\widehat\nabla\widehat q^\pm
\cdot\widehat t_{i,h}\rrbracket(\widehat x_K)\right|\leq
Ch_K^{2-N/2}\sum_{s=\pm}\|v_E^s\|_{H^2(K)}
+Ch_K^{1-N/2}\|G_\tau(v)\|_{L^2(K)}.
\end{equation}

\textbf{Estimate of the flux component.}
By  \eqref{B_h_trans} and the relation 
\[
n_{h}
=J_K(\widehat x_K)^{-\top}\widehat n_h/|J_K(\widehat x_K)^{-\top}\widehat n_h|,
\]
we have, for \(x=F_K(\widehat x), ~ \widehat x \in \widehat K\) and \(s\in\{+,-\}\),
\[
(\widehat{\mathcal B}_{h}^s
\widehat\nabla\widehat q^s\cdot\widehat n_h)(\widehat x)
=|J_K^{-\top}(\widehat x_K)\widehat n_h|
n_h^\top J_K(\widehat x_K)J_K^{-1}(\widehat x)\mathbb B_{h}^s\nabla q^s(x).
\]
It follows from  \eqref{eq:isoparametric_regularity} that
\[
 \left|\llbracket \widehat{\mathcal B}_{h}^\pm
 \widehat\nabla\widehat q^\pm\cdot\widehat n_{h}\rrbracket\right|
 \leq Ch_K^{-1}\left(\left|\llbracket n_h^\top\mathbb B_{h}^\pm\nabla q^\pm\rrbracket\right|+\left|n_h^\top(J_K(\widehat x_K)J_K^{-1}(\widehat x)-\mathbb{I})\llbracket \mathbb B_{h}^\pm\nabla q^\pm\rrbracket\right|\right).
\]
Using the asymptotically affine condition~\eqref{eq:asymptotically_affine_derivatives}, we have
\[
\begin{aligned}
|J_K(\widehat x_K)J_K^{-1}(\widehat x)-\mathbb{I}|
    &=
|(J_K(\widehat x_K)-J_K(\widehat x))J_K^{-1}(\widehat x)|
    \leq
|J_K(\widehat x_K)-J_K(\widehat x)||J_K^{-1}(\widehat x)|
\\
    &\leq
    Ch_K^{-1}
    \|D^2F_K\|_{L^\infty(\widehat K)}
    |\widehat x-\widehat x_K|
    \leq Ch_K.
\end{aligned}
\]
Therefore, 
\[
h_K^2\left|
\llbracket\widehat{\mathcal B}_{h}^\pm
\widehat\nabla\widehat q^\pm \cdot\widehat n_{h}
\rrbracket\right|\\
\le Ch_K^{1-N/2}
\|\llbracket
\mathbb B_{h}^\pm\nabla q^\pm\cdot n_{h}
\rrbracket\|_{L^2(K)}+Ch_K^{2-N/2}
\sum_{s=\pm}\|q^s\|_{H^1(K)}.
\]
Decompose the normal-flux jump in the first term as
\[
\llbracket
\mathbb B_{h}^\pm\nabla q^\pm\cdot n_{h}
\rrbracket\\
=\llbracket
\mathbb B_{h}^\pm\nabla(q^\pm-v_E^\pm)
\cdot n_{h}\rrbracket\\
+\llbracket
(\mathbb B_{h}^\pm-\mathbb B_E^\pm)
\nabla v_E^\pm\cdot n_{h}\rrbracket\\
+\llbracket
\mathbb B_E^\pm\nabla v_E^\pm\cdot(n_{h}-n)\rrbracket
+G_n(v),
\]
where \(G_n(v)\) is defined in \eqref{def_extended_transmission_residuals}.
Using \eqref{standard_Q1_local_bounds}, \eqref{eq:coefficient_freezing}, and
\eqref{eq:normal_approximation_on_K}, we have
\begin{equation}\label{eq:flux_residual_estimate}
h_K^2\left|
\llbracket\widehat{\mathcal B}_{h}^\pm
\widehat\nabla\widehat q^\pm \cdot\widehat n_{h}
\rrbracket(\widehat M_K)\right|
\leq
Ch_K^{2-N/2}\sum_{s=\pm}\|v_E^s\|_{H^2(K)}
+Ch_K^{1-N/2}\|G_n(v)\|_{L^2(K)}.
\end{equation}

\textbf{Estimate of the higher-order coefficient components.}
Let \(P_1(\widehat K)\) denote the space of polynomials of total degree at
most one on \(\widehat K\).
Since
\(
    c_{\bm\alpha}(\widehat q^s)
    =
    \widehat\partial^{\bm\alpha}
    \widehat q^s(\bm 0),
\)
the equivalence of norms on the finite-dimensional
quotient space \(Q_1(\widehat K)/P_1(\widehat K)\) yields, for every
\(\bm\alpha\in\{0,1\}^N\) with \(|\bm\alpha|\geq2\),
\[
|c_{\bm\alpha}(\widehat q^s)|
\leq \|\widehat\partial^{\bm\alpha}\widehat q^s\|_{L^\infty(\widehat K)}
\leq C |\widehat q^s|_{H^2(\widehat K)}.
\]
By \eqref{ref_phy_H2}, we obtain
\[
\lvert\widehat q^s\rvert_{H^2(\widehat K)}
\leq
C h_K^{2-\frac N2}
\left(
    \lvert q^s\rvert_{H^2(K)}
    +
    \lvert q^s\rvert_{H^1(K)}
\right)\leq
C h_K^{2-\frac N2}
\|q^s\|_{H^2(K)}.
\]
Consequently,
\begin{equation}\label{eq:mixed_residual_estimate}
\left|\llbracket  c_{\bm\alpha}(\widehat q^\pm)\rrbracket\right|
\le
Ch_K^{2-N/2}\sum_{s=\pm}\|q^s\|_{H^2(K)}
\le
Ch_K^{2-N/2}\sum_{s=\pm}\|v_E^s\|_{H^2(K)}.
\end{equation}

Combining \eqref{pro_inter0}--\eqref{eq:mixed_residual_estimate} with \eqref{def_residual_map} completes the proof.
\end{proof}

\subsection{Global interpolation estimates}
Define the global IFE interpolation operator \(\Pi_h^{\rm IFE}\)
elementwise by
\(\left.(\Pi_h^{\rm IFE}v)\right|_K=\Pi_Kv\) on non-interface elements
and \(\left.(\Pi_h^{\rm IFE}v)\right|_K=\Pi_K^{\rm IFE}v\) on interface
elements. By \eqref{esti_strip},  \eqref{eq:H2_extension}, and Lemma~\ref{lem:local_interpolation}, we have the following result.

\begin{theorem}\label{thm:optimal_interpolation}
For every \(v\in\widetilde H^2(\Omega)\), we have, for \(m=0,1\),
\[
    \sum_{K\in\mathcal K_h}
    \|v-\Pi_h^{\rm IFE}v\|_{H^m(K^+\cup K^-)}^2
    \leq
    Ch^{4-2m}
    \|v\|_{H^2(\Omega^+\cup\Omega^-)}^2.
\]
\end{theorem}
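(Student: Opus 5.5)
The plan is to split the sum over \(\mathcal K_h\) into non-interface and interface elements and treat each part with results already in hand.

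\textbf{Non-interface elements.} For \(K\in\mathcal K_h^{\rm non}\), the function \(v\) restricted to \(K\) lies in \(H^2(K)\), since \(K\) lies entirely in \(\Omega^+\) or entirely in \(\Omega^-\). Moreover \(\Pi_h^{\rm IFE}v|_K=\Pi_Kv\) is the standard isoparametric \(Q_1\) interpolant. Under Assumption~\ref{assump:mesh_regularity}, the first line of \eqref{standard_Q1_local_bounds} (with \(v_E^s\) replaced by \(v\)) gives
\[
\|v-\Pi_Kv\|_{H^m(K)}\le Ch_K^{2-m}\|v\|_{H^2(K)} .
\]
Squaring and summing over \(K\in\mathcal K_h^{\rm non}\) bounds this part by \(Ch^{4-2m}\|v\|_{H^2(\Omega^+\cup\Omega^-)}^2\).

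\textbf{Interface elements.} For \(K\in\mathcal K_h^\Gamma\), on \(K^s\) we have \(v=v_E^s\). Hence
\[
\|v-\Pi_K^{\rm IFE}v\|_{H^m(K^+\cup K^-)}^2\le\sum_{s=\pm}\|v_E^s-(\Pi_K^{\rm IFE}v)^s\|_{H^m(K)}^2 ,
\]
where the right-hand side uses full-element norms of the polynomial pieces. Lemma~\ref{lem:local_interpolation} bounds the seminorm for each \(m\in\{0,1\}\). To get the full \(H^1\) norm when \(m=1\), I add the \(m=0\) estimate; since \(h_K\le h\le C\), the \(m=0\) bound is dominated by the \(m=1\) bound. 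Squaring gives
\[
\|v-\Pi_K^{\rm IFE}v\|_{H^m(K^+\cup K^-)}^2\le Ch_K^{4-2m}\sum_{s=\pm}\|v_E^s\|_{H^2(K)}^2+Ch_K^{2-2m}\bigl(\|G_\tau(v)\|_{L^2(K)}^2+\|G_n(v)\|_{L^2(K)}^2\bigr).
\]

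\textbf{Summing the interface terms.} By Assumption~\ref{assump:mesh_size}, every interface element lies in \(U(\Gamma,\delta_0)\). Together with the finite overlap of the mesh (each point belongs to a bounded number of elements), this gives
\[
\sum_{K\in\mathcal K_h^\Gamma}\|v_E^s\|_{H^2(K)}^2\le C\|v_E^s\|_{H^2(U(\Gamma,\delta_0))}^2 ,
\]
and \eqref{eq:H2_extension} bounds the right-hand side by \(C\|v^s\|_{H^2(\Omega^s)}^2\). For the remaining terms, I use \(h_K^{2-2m}\le h^{2-2m}\) and then apply the strip estimate \eqref{esti_strip}. This bounds the \(G_\tau\) and \(G_n\) contributions by
\[
h^{2-2m}\cdot Ch^2\|v\|_{H^2(\Omega^+\cup\Omega^-)}^2=Ch^{4-2m}\|v\|_{H^2(\Omega^+\cup\Omega^-)}^2 .
\]
Adding the non-interface and interface contributions gives the claimed estimate.

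The only subtle point is the lower-order terms \(G_\tau\) and \(G_n\). Locally they carry only the factor \(h_K^{1-m}\), which is one power of \(h\) short of optimal. The missing power is recovered globally from the strip estimate, which uses the fact that \(G_\tau(v)\) and \(G_n(v)\) vanish on \(\Gamma\) and that the interface elements lie in an \(O(h)\)-neighbourhood of \(\Gamma\). That estimate is already recorded in \eqref{esti_strip}, so I expect no real obstacle. One routine point to confirm is that the width of the strip is \(O(h)\), so that \(\delta=Ch\) can be used in \eqref{eq:strip_estimates}.
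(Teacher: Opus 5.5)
Your proposal is correct and follows the paper's argument. The paper derives the theorem in one line from Lemma~\ref{lem:local_interpolation}, the extension bound \eqref{eq:H2_extension}, and the strip estimate \eqref{esti_strip}, leaving implicit the standard isoparametric estimate on non-interface elements and the step from seminorms to full \(H^m\) norms, both of which you spell out.
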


\section{IFE Formulation and Error Analysis}\label{sec_method}
Let \(\mathcal E_h\) denote the set of all \((N-1)\)-dimensional mesh
faces. A face \(e\in\mathcal E_h\) is called an interface face if
\(e\cap\Gamma\neq\emptyset\), and a non-interface face otherwise. The
corresponding sets are denoted by \(\mathcal E_h^\Gamma\) and
\(\mathcal E_h^{\rm non}\), respectively.
We use the standard notation for jumps and averages. For each interior face
\(e\), let \(K_1^e\) and \(K_2^e\) be the two elements
sharing \(e\), and fix a unit normal \(n_e(x)\) directed from \(K_1^e\) to
\(K_2^e\). We set
\(
    [v]_e:=v|_{K_1^e}-v|_{K_2^e},
\)
and
\(
    \{v\}_e:=\frac12
    \bigl(v|_{K_1^e}+v|_{K_2^e}\bigr).
\)

For each \(e\in\mathcal E_h^\Gamma\), let
\(
    \mathbf Q_e
    =
    \left\{
        \mathbf q \,:\,
        \mathbf q|_{K_i^e}=\nabla v_i, \,
        v_i\in V^{\rm IFE}(K_i^e), \, i=1,2
    \right\}.
\)
The local lifting operator
\(\mathbf r_e:L^2(e)\to\mathbf Q_e\) is defined by
\[
    \int_{K_1^e\cup K_2^e}
        \mathbb B\mathbf r_e(v)\cdot\mathbf q\,dx
    =
    \int_e
        v\,
        \bigl\{(\mathbb B\mathbf q)\cdot n_e\bigr\}_e\,ds
    \qquad
    \forall\,\mathbf q\in\mathbf Q_e.
\]

For \(v,w\in\widetilde H^2(\Omega)+V_h^{\rm IFE}\), define
\begin{subequations}\label{def_AAH}
\begin{align}
    A_h(v,w)
    &=
    a_h(v,w)+b_h(v,w)+s_h(v,w),\\
    a_h(v,w)
    &=
    \int_\Omega
        (\mathbb B\nabla_hv)\cdot\nabla_hw,
        \label{def_AAH_ah}\\
    b_h(v,w)
    &=
    -\sum_{e\in\mathcal E_h^\Gamma}
    \int_e
    \left(
        \bigl\{(\mathbb B\nabla_hv)\cdot n_e\bigr\}_e[w]_e
        +
        \bigl\{(\mathbb B\nabla_hw)\cdot n_e\bigr\}_e[v]_e
    \right),
        \label{def_AAH_bh}\\
    s_h(v,w)
    &=
    \eta_0
    \sum_{e\in\mathcal E_h^\Gamma}
    \int_{K_1^e\cup K_2^e}
        \mathbb B\mathbf r_e([v]_e)
        \cdot\mathbf r_e([w]_e),
        \label{def_AAH_sh}
\end{align}
\end{subequations}
where \(\eta_0\ge 12\), and \(\nabla_h\) is defined elementwise by
\(
    (\nabla_hv)|_{K^\pm}:=\nabla(v|_{K^\pm})
\)
for all
\( K\in\mathcal K_h.
\)
The immersed finite element method reads as follows: find
\(u_h\in V_{h,0}^{\rm IFE}\) such that
\begin{equation}\label{method_IFE}
    A_h(u_h,v_h)
    =
    \int_\Omega fv_h\,dx
    \qquad
    \forall\,v_h\in V_{h,0}^{\rm IFE}.
\end{equation}
\begin{remark}
With the lifting stabilization \eqref{def_AAH_sh}, coercivity holds
for every \(\eta_0\geq 12\).
Let $h_e$ denote the diameter of $e$.
One may use the penalty stabilization
\[
    \widetilde{s}_h(v,w)
    =
    \sum_{e\in\mathcal E_h^\Gamma}
    \frac{\eta_0}{h_e}
    \int_e [v]_e[w]_e.
\]
For this choice, however, \(\eta_0\) must be
 large enough  to ensure coercivity.
\end{remark}

\subsection{Error estimates}
Define the energy and augmented norms by 
\[
\begin{aligned}
\|v\|_h^2&=a_h(v,v),\\
\interleave v \interleave_h^2&=\|v\|_{h}^2+\sum_{e\in\mathcal{E}_h^\Gamma}\Big(h_e\|\{\mathbb{B}\nabla_h v\}_e\|^2_{L^2(e)}+h_e^{-1}\| [v]_e\|^2_{L^2(e)}\Big)+s_h(v,v).
\end{aligned}
\]
By the Cauchy--Schwarz inequality, \(A_h(\cdot,\cdot)\) satisfies
the continuity estimate
\begin{equation}\label{bound_A}
    |A_h(v,w)|
    \le C
    \interleave v\interleave_h
    \interleave w\interleave_h
    \qquad
    \forall\,v,w\in
    \widetilde H^2(\Omega)+V_h^{\rm IFE}.
\end{equation}
The coercivity of \(A_h(\cdot,\cdot)\) follows by adapting the proof of \cite[Lemma 5.1]{ji2023immersed} and observing that each interface element is counted at most six times in the sum over interface faces
(see the fourth configuration in Fig.~\ref{fig_3D}). We have, for \(\eta_0\ge 12\),
\begin{equation}\label{coer_A}
    A_h(v_h,v_h)
    \geq
    \frac12\|v_h\|_h^2
    \qquad
    \forall\,v_h\in V_{h,0}^{\rm IFE}.
\end{equation}

We next establish a uniform estimate for the polynomial extensions of the two pieces of a local IFE function.
\begin{lemma}
Let \(K\in\mathcal K_h^\Gamma\). For every \(v_h\in V^{\rm IFE}(K)\),
\begin{equation}\label{vh_pm_vh01}
    |v_h^s|_{H^m(K)}
    \le C
    |v_h|_{H^m(K^+\cup K^-)},
    \quad
    m=0,1,\quad s\in\{+,-\}.
\end{equation}
\end{lemma}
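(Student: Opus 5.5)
The plan is to reduce the estimate to the reference element and to exploit the fact that the two polynomial pieces of a local IFE function differ only by a multiple of the affine function \(\widehat\ell_K\). By \eqref{deco_phi}, every \(\widehat v\in\widehat V_K^{\rm IFE}(\widehat K)\) satisfies
\[
\widehat v^+-\widehat v^-=\alpha\,\widehat\ell_K,\qquad \alpha\in\mathbb R .
\]
This follows because \(\widehat\Pi\widehat v\) and \(\widehat\Pi\widehat w\) are single polynomials, while \(\widehat w^+-\widehat w^-=\widehat\ell_K\).

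Since \(|\widehat K^+|+|\widehat K^-|=2^N\), at least one side \(s_0\in\{+,-\}\) satisfies \(|\widehat K^{s_0}|\ge 2^{N-1}\). The argument then has three steps.

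\emph{Step 1: control of the large side.} I will show that for every \(\widehat p\in Q_1(\widehat K)\) and every measurable \(E\subset\widehat K\) with \(|E|\ge 2^{N-1}\),
\[
\|\widehat p\|_{L^\infty(\widehat K)}\le C\|\widehat p\|_{L^2(E)},\qquad |\widehat p|_{W^{1,\infty}(\widehat K)}\le C|\widehat p|_{H^1(E)},
\]
with \(C\) independent of \(E\). For the first bound I would use a compactness argument. The unit sphere \(\{\|\widehat p\|_{L^\infty}=1\}\) of \(Q_1\) is compact, and nonzero polynomials have null zero sets. Hence \(\sup_{\widehat p}|\{|\widehat p|<\varepsilon\}|\to0\) as \(\varepsilon\to0\). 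Choosing \(\varepsilon\) so that this measure is below \(2^{N-2}\), we get \(\|\widehat p\|_{L^2(E)}^2\ge\varepsilon^2 2^{N-2}\). The second bound follows by applying the same argument to the finite-dimensional space of gradients of \(Q_1\), or equivalently to \(Q_1/\mathbb R\).

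\emph{Step 2: bound on \(\alpha\) through the flux condition.} Substituting \(\widehat v^{-s_0}=\widehat v^{s_0}\mp\alpha\widehat\ell_K\) into \eqref{dis_jump_Q1_2} and using \(\widehat\nabla\widehat\ell_K=\widehat n_h\), I expect to obtain
\[
\rho^{-s_0}(\widehat M_K)\,\alpha=\pm\bigl(\widehat n_h^\top\llbracket\widehat{\mathcal B}_h^\pm\rrbracket\widehat\nabla\widehat v^{s_0}\bigr)(\widehat M_K).
\]
The point is to write the equation in terms of the large-side piece only. This way the denominator \(\mathcal D\) never enters, and we only need \(\rho^{\pm}\ge C_1h_K^{-2}\) together with \(|\widehat{\mathcal B}_h^\pm|\le Ch_K^{-2}\) (see the proof of Lemma~\ref{bound_IFE_basis}). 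This gives
\[
|\alpha|\le C|\widehat v^{s_0}|_{W^{1,\infty}(\widehat K)}\le C\|\widehat v^{s_0}\|_{L^\infty(\widehat K)},
\]
where the last inequality is the inverse estimate on \(Q_1\). Since \(\|\widehat\ell_K\|_{W^{1,\infty}(\widehat K)}\le C\) (as \(\widehat x_K\in\widehat K\)), it follows that \(\|\widehat v^{-s_0}\|_{W^{m,\infty}}\) is controlled by \(\|\widehat v^{s_0}\|_{W^{m,\infty}}\) for \(m=0\). For \(m=1\) the analogous control holds in the seminorm, since the \(m=1\) bound on \(\alpha\) uses only the gradient of \(\widehat v^{s_0}\).

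\emph{Step 3: return to the physical element.} Combining Steps 1 and 2 gives
\[
|\widehat v^s|_{H^m(\widehat K)}\le C|\widehat v^{s_0}|_{H^m(\widehat K^{s_0})}\le C|\widehat v|_{H^m(\widehat K^+\cup\widehat K^-)}\qquad\text{for both } s.
\]
Mapping back with \eqref{eq:isoparametric_regularity} and \(\det J_K\simeq h_K^N\) produces equal scaling factors \(h_K^{N/2-m}\) on both sides, which proves \eqref{vh_pm_vh01}.

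The main obstacle is the uniformity in Step 1 with respect to arbitrary subdomains \(E\) of measure at least half, which is a Remez-type inequality; the compactness argument above is the route I would try first. A secondary point to watch is making sure the bound on \(\alpha\) in Step 2 is expressed through the large side only, so that it does not depend on the position of the interface.
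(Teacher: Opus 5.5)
Your proof is correct. Its skeleton matches the paper's:
\begin{itemize}
\item the jump $\widehat v^+-\widehat v^-$ equals $\alpha\widehat\ell_K$;
\item the flux condition is solved for $\alpha$ through the piece on a distinguished side $s_0$, dividing by $\rho^{-s_0}$, so $\mathcal D(\widehat M_K)$ never enters;
\item a polynomial norm equivalence carries control from part of $\widehat K$ to all of $\widehat K$, followed by scaling back.
\end{itemize}
A minor point: in both cases one gets $\rho^{-s_0}\alpha=-\widehat n_h^\top\llbracket\widehat{\mathcal B}_h^\pm\rrbracket\widehat\nabla\widehat v^{s_0}$, so your $\pm$ is always a minus. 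This does not matter, since only $|\alpha|$ is used.

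The genuine difference is how the distinguished side is chosen and exploited. The paper uses Assumption~\ref{assump:mesh_size}, namely $\|\widehat\ell_K\|_{L^\infty(\widehat\Gamma_K)}\le 1/2$, together with a geometric lemma from the literature to inscribe a ball $B(\widehat x_0,1/4)\subset\widehat K^{s_0}$. It then applies norm equivalence for $Q_1$ on the concentric balls $B(\widehat x_0,1/4)\subset\widehat K\subset B(\widehat x_0,\operatorname{diam}\widehat K)$.

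You instead take a side with $|\widehat K^{s_0}|\ge 2^{N-1}$, which exists trivially. You then prove a Remez-type bound for $Q_1$ and its gradients, uniform over all measurable sets of at least half measure, by compactness together with the fact that nonzero polynomials have null zero sets. Your compactness argument is sound, including the passage to the limit polynomial via continuity of measure from above.

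What your route buys is generality and self-containedness. The estimate then holds for an arbitrary measurable splitting of $\widehat K$, with no information about where $\widehat\Gamma_K$ lies relative to $\widehat{\mathcal H}_K$. In particular, the mesh-size assumption and the cited geometric lemma are not needed at this step. What the paper's route buys is a constructive argument whose constant could in principle be tracked explicitly. Your compactness constant is not explicit, though it is still independent of the interface position, which is all the lemma requires.
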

\begin{proof}

Under Assumption~\ref{assump:mesh_size}, an argument similar to that in
\cite[Lemma~4.3]{Ji2026JSC} shows that there exist
\(s_0\in\{+,-\}\) and \(\widehat{x}_0\in\widehat{K}^{s_0}\) such that
the ball \(B(\widehat{x}_0,r_0)\), centered at
\(\widehat{x}_0\) with radius \(r_0=1/4\), satisfies
\(
B(\widehat{x}_0,r_0)\subset\widehat{K}^{s_0}.
\)
By
\eqref{dis_jump_Q1_1} and \eqref{dis_jump_Q1_3},
\(\llbracket\widehat v_h^\pm\rrbracket\) is affine and vanishes on
\(\widehat{\mathcal H}_K\). Hence,
\(
\llbracket\widehat v_h^\pm\rrbracket
=
\widehat\ell_K\,
\llbracket\widehat\nabla\widehat v_h^\pm\rrbracket
\cdot\widehat{n}_h.
\)
Using \eqref{dis_jump_Q1_2}, we have
\[
\llbracket\widehat\nabla\widehat v_h^\pm\rrbracket
\cdot\widehat{n}_h
=
\frac{
\widehat{ n}_h^\top
(\widehat{\mathcal B}_h^--\widehat{\mathcal B}_h^+)
\widehat\nabla\widehat v_h^-}
{
\widehat{ n}_h^\top
\widehat{\mathcal B}_h^+
\widehat{ n}_h}(\widehat M_K)
=
\frac{
\widehat{ n}_h^\top
(\widehat{\mathcal B}_h^--\widehat{\mathcal B}_h^+)
\widehat\nabla\widehat v_h^+(\widehat M_K)}
{
\widehat{ n}_h^\top
\widehat{\mathcal B}_h^-
\widehat{ n}_h}(\widehat M_K).
\]
Therefore, 
\[
|\widehat {v}_h^+|_{H^m(\widehat K^-)}\leq |\widehat {v}_h^-|_{H^m(\widehat K^-)}+C| \nabla \widehat v_h^{s_0}(\widehat M_K)|.
\]
Let \(r_1=\operatorname{diam}(\widehat K)\). Define a ball \(B(\widehat x_0,r_1)\). 
Then \(B(\widehat x_0,r_0)\subset \widehat K \subset B(\widehat x_0,r_1)\). 
 An argument similar to that in \cite[Lemma 2.2]{2016High} gives
\[
\|\nabla \widehat v_h^{s_0}\|_{L^2(\widehat K)}\leq \|\nabla \widehat v_h^{s_0}\|_{L^2(B(\widehat x_0,r_1))}\leq C\|\nabla \widehat v_h^{s_0}\|_{L^2(B(\widehat x_0,r_0))},
\]
where the constant \(C\) is independent of the interface.
Therefore,
\[
| \nabla \widehat v_h^{s_0}(\widehat M_K)|\leq \|\nabla \widehat v_h^{s_0}\|_{L^\infty(\widehat K)}\leq C\|\nabla \widehat v_h^{s_0}\|_{L^2(\widehat K)}\leq C|\widehat v_h^{s_0}|_{H^m(B(\widehat x_0,r_0))}.
\]
Combining the above results, we have
\[
|\widehat {v}_h^+|_{H^m(\widehat K)}\leq C|\widehat {v}_h^-|_{H^m(\widehat K^-)}+C|\widehat v_h^{s_0}|_{H^m(\widehat K^{s_0})}\leq C|\widehat v_h|_{H^m(\widehat K^+\cup \widehat K^-)},
\]
which, together with the standard scaling
argument, proves \eqref{vh_pm_vh01} for \(s=+\). For \(s=-\), the proof is similar.
\end{proof}

Using the extension estimate \eqref{vh_pm_vh01} and arguing as in
\cite[Lemmas 4.4--4.5]{Ji2026JSC},  we get the following trace inequality on an interface element \(K\in\mathcal K_h^\Gamma\):
\[
\|\nabla_h v_h\|_{L^2(\partial K)}\leq C h_K^{-1/2}\|\nabla_h v_h\|_{L^2(K)}~\quad \forall v_h\in V^{\mathrm{IFE}}(K),
\]
and the stability estimate of the lifting operator $\mathbf{r}_e$:
\[
\|\mathbf{r}_e(v)\|_{L^2(K_1^e\cup K_2^e)}\leq C h_e^{-1/2}\|v\|_{L^2(e)}\quad \forall v\in L^2(e).
\]

The extension estimate \eqref{vh_pm_vh01} is also used in the following estimate.
\begin{lemma}\label{lem_jum_less}
Let $\mathcal{K}_h^e$ be the set of elements having $e$ as a face.  For all \(e\in\mathcal{E}_h^\Gamma\),
\[
\|[v_h]_e\|_{L^2(e)}^2\le C h_e\sum_{K\in\mathcal{K}_h^e}|v_h|^2_{H^1( K^+\cup K^-)} \quad  \forall v_h\in V_{h}^{\rm IFE}.
\]
\end{lemma}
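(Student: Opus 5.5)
The plan is to exploit the fact that $v_h$ is single-valued at every mesh vertex. Hence the jump $[v_h]_e$ vanishes at all vertices of $e$.

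Fix an interface face $e$ shared by $K_1=K_1^e$ and $K_2=K_2^e$, and set $v_i:=v_h|_{K_i}$. On each $K_i$, I compare $v_i$ with its standard isoparametric nodal interpolant $\Pi_{K_i}v_i\in V(K_i)$. Since $v_h$ is single-valued at mesh vertices and both $\Pi_{K_1}v_1$ and $\Pi_{K_2}v_2$ restricted to $e$ are the same $Q_1$-type trace determined by the common nodal values on $e$, we get $\Pi_{K_1}v_1|_e=\Pi_{K_2}v_2|_e$. Therefore
\[
[v_h]_e=(v_1-\Pi_{K_1}v_1)|_e-(v_2-\Pi_{K_2}v_2)|_e,
\]
and it suffices to show that $\|v_i-\Pi_{K_i}v_i\|_{L^2(e)}^2\le Ch_e|v_h|^2_{H^1(K_i^+\cup K_i^-)}$. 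If $K_i$ is a non-interface element, then $v_i=\Pi_{K_i}v_i$ and there is nothing to prove. So let $K=K_i$ be an interface element.

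On an interface element I pass to $\widehat K$. There $\widehat v_h-\widehat\Pi\widehat v_h=\alpha_K(\widehat w-\widehat\Pi\widehat w)$ by \eqref{deco_phi}, with $\alpha_K=\llbracket\widehat\nabla\widehat v_h^\pm\cdot\widehat n_h\rrbracket(\widehat M_K)$. The functions $\widehat w$ and $\widehat\Pi\widehat w$ are bounded in $L^\infty(\widehat K)$ independently of the interface, since $|\widehat\ell_K|\le C$ on $\widehat K$. Hence
\[
\|\widehat v_h-\widehat\Pi\widehat v_h\|_{L^\infty(\widehat K)}\le C|\alpha_K|\le C\sum_{s=\pm}\|\widehat\nabla\widehat v_h^s\|_{L^\infty(\widehat K)}\le C\sum_{s=\pm}|\widehat v_h^s|_{H^1(\widehat K)},
\]
where the last step uses norm equivalence on the finite-dimensional space $Q_1(\widehat K)$ modulo constants. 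Mapping back with \eqref{eq:isoparametric_regularity} gives $|\widehat v_h^s|_{H^1(\widehat K)}\le Ch_K^{1-N/2}|v_h^s|_{H^1(K)}$. The extension estimate \eqref{vh_pm_vh01} with $m=1$ then bounds $|v_h^s|_{H^1(K)}\le C|v_h|_{H^1(K^+\cup K^-)}$. Finally, $\|v_i-\Pi_{K}v_i\|_{L^2(e)}^2\le |e|\,\|\cdot\|_{L^\infty}^2\le Ch_e^{N-1}h_K^{2-N}|v_h|_{H^1(K^+\cup K^-)}^2$. Shape regularity gives $h_K\simeq h_e$, so this is $\le Ch_e|v_h|^2_{H^1(K^+\cup K^-)}$. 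Summing over the two elements and using the triangle inequality yields the claim.

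The main obstacle is the step that must be independent of the interface location: controlling $|\alpha_K|$ by the broken $H^1$ seminorm when one subelement may be tiny. The pieces $\widehat v_h^\pm$ are polynomials on the whole of $\widehat K$, and their full-element seminorms could a priori be large relative to their restrictions. This is exactly what \eqref{vh_pm_vh01} resolves, so the argument hinges on invoking it rather than estimating on the possibly degenerate subregions. A secondary point to verify is the trace identity $\Pi_{K_1}v_1|_e=\Pi_{K_2}v_2|_e$. It holds because the isoparametric maps of conforming neighbours agree on $e$, so the $Q_1$ traces coincide given equal vertex values.
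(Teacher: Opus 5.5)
Your proposal is correct and takes essentially the same approach as the paper. You subtract the continuous standard isoparametric interpolant so that $[v_h]_e=[v_h-\Pi_hv_h]_e$, bound $\widehat v_h-\widehat\Pi\widehat v_h=\alpha_K(\widehat w-\widehat\Pi\widehat w)$ in $L^\infty(\widehat K)$ by the gradient jump, scale back to $e$, and close with the extension estimate \eqref{vh_pm_vh01}. The only cosmetic differences are that you control $|\alpha_K|$ through $L^\infty$ norms of the full gradients plus norm equivalence on $Q_1(\widehat K)$ (the paper uses the $L^2(\widehat K)$ norm of the constant normal-gradient jump), and that you explicitly dispose of non-interface neighbours and the trace compatibility $\Pi_{K_1}v_1|_e=\Pi_{K_2}v_2|_e$, which the paper leaves implicit.
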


\begin{proof}
Let \((\Pi_h v_h)|_K= \Pi_K (v_h|_K)\) for all \(K\in\mathcal{K}_h\). Then, on \(e\), we have
\begin{equation}\label{pro_edg_jump0}
[v_h]_e=[v_h-\Pi_h v_h]_e.
\end{equation}
We next estimate \(v_h-\Pi_Kv_h\) on \(K\in \mathcal{K}_h^e\). On the reference
element, \eqref{deco_phi} gives
\[
    \|\widehat v_h-\widehat\Pi\widehat v_h\|_{L^\infty(\widehat K)}
    =
     \|\llbracket
        \widehat\nabla\widehat v_h^\pm
        \cdot\widehat n_{h}
    \rrbracket
    \bigl(
        \widehat w-\widehat\Pi\widehat w
    \bigr)\|_{L^\infty(\widehat K)}\leq 
    C\|\llbracket
        \widehat\nabla\widehat v_h^\pm
        \cdot\widehat n_{h}
    \rrbracket \|_{L^2(\widehat K)}.
\]
Using the standard scaling relations for the isoparametric map, we have
\[
    \|v_h-\Pi_Kv_h\|_{L^2(e)}^2
    \leq     C h_K\|\llbracket
         \nabla  v_h^\pm
    \rrbracket \|^2_{L^2( K)}
    \leq Ch_K\sum_{s=\pm}|v_h^s|^2_{H^1(K)}.
\]
It follows from \eqref{vh_pm_vh01} that 
\[
    \|v_h-\Pi_Kv_h\|_{L^2(e)}^2\leq Ch_K  |v_h|^2_{H^1(K^+\cup K^-)},
\]
which, together with \eqref{pro_edg_jump0}, proves this lemma.
\end{proof}

Combining the above results, we conclude that
\(\|\cdot\|_h\) and \(\interleave\cdot\interleave_h\) are uniformly
equivalent on \(V_{h,0}^{\rm IFE}\):
\begin{equation}\label{norm_equ}
    C_1\|v_h\|_h
    \leq
    \interleave v_h\interleave_h
    \leq
    C_2\|v_h\|_h
    \qquad
    \forall\,v_h\in V_{h,0}^{\rm IFE}.
\end{equation}

Arguing as in \cite[Lemma 5.5]{ji2023immersed}, and using the local
interpolation estimate \eqref{eq:local_interpolation}, the strip
estimate \eqref{esti_strip}, the extension estimate
\eqref{eq:H2_extension}, and the standard trace inequality, we obtain
the following optimal interpolation error estimate in the augmented
norm:
\begin{equation}\label{aug_inter}
    \interleave
        v-\Pi_h^{\rm IFE}v
    \interleave_h
    \le
    Ch\,
    \|v\|_{H^2(\Omega^+\cup\Omega^-)}
    \qquad
    \forall v\in\widetilde H^2(\Omega).
\end{equation}

Since an IFE function generally has a nonzero jump across \(\Gamma\), this jump contributes to the consistency error. The following lemma estimates the jump and establishes a sharper bound for the IFE interpolant.
\begin{lemma}\label{lem:exact-interface-jump}
For every \(v_h\in V_h^{\rm IFE}\), the jump across the exact interface
satisfies
\begin{equation}\label{eq:discrete-exact-interface-jump}
    \left\|
        \llbracket v_h^\pm\rrbracket
    \right\|_{L^2(\Gamma)}
    \leq
    C h^{3/2}\|v_h\|_h.
\end{equation}
Moreover, for every \(v\in\widetilde H^2(\Omega)\), its IFE interpolant
\(v_I:=\Pi_h^{\rm IFE}v\) satisfies
\begin{equation}\label{interpolant-exact-interface-jump}
    \left\|
        \llbracket v_I^\pm\rrbracket
    \right\|_{L^2(\Gamma)}
    \leq
    C h^2
    \|v\|_{H^2(\Omega^+\cup\Omega^-)}.
\end{equation}
\end{lemma}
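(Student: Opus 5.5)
The plan is to work element by element on the reference element and exploit the identity already used in the proof of \eqref{vh_pm_vh01}. By \eqref{dis_jump_Q1_1} and \eqref{dis_jump_Q1_3}, the jump $\llbracket\widehat v_h^\pm\rrbracket$ is affine and vanishes on $\widehat{\mathcal H}_K$. Hence
\[
\llbracket\widehat v_h^\pm\rrbracket=\widehat\ell_K\,\llbracket\widehat\nabla\widehat v_h^\pm\rrbracket\cdot\widehat n_h
\]
with a constant factor. Lemma~\ref{lem:interface_proximity} then gives $|\widehat\ell_K|\le Ch_K$ on $\widehat\Gamma_K$, so
\[
\|\llbracket\widehat v_h^\pm\rrbracket\|_{L^\infty(\widehat\Gamma_K)}\le Ch_K\,|\llbracket\widehat\nabla\widehat v_h^\pm\rrbracket\cdot\widehat n_h|.
\]
Mapping back by $F_K$, the surface measure of $\Gamma_K$ is $O(h_K^{N-1})$. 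Therefore
\[
\|\llbracket v_h^\pm\rrbracket\|_{L^2(\Gamma_K)}\le Ch_K^{(N-1)/2}\,h_K\,|\llbracket\widehat\nabla\widehat v_h^\pm\rrbracket\cdot\widehat n_h|.
\]
Summing the squares over $K\in\mathcal K_h^\Gamma$ gives both estimates, once the constant $|\llbracket\widehat\nabla\widehat v_h^\pm\rrbracket\cdot\widehat n_h|$ is bounded appropriately in each case.

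For \eqref{eq:discrete-exact-interface-jump}, bound the constant by the norm of the jump of the reference gradients:
\[
|\llbracket\widehat\nabla\widehat v_h^\pm\rrbracket\cdot\widehat n_h|\le C\|\llbracket\widehat\nabla\widehat v_h^\pm\rrbracket\|_{L^2(\widehat K)}\le C\sum_s|\widehat v_h^s|_{H^1(\widehat K)}.
\]
Scaling gives $|\widehat v_h^s|_{H^1(\widehat K)}\le Ch_K^{1-N/2}|v_h^s|_{H^1(K)}$, and \eqref{vh_pm_vh01} gives $|v_h^s|_{H^1(K)}\le C|v_h|_{H^1(K^+\cup K^-)}$. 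Combining, each element contributes
\[
Ch_K^{(N-1)/2+1+1-N/2}|v_h|_{H^1}=Ch_K^{3/2}|v_h|_{H^1(K^+\cup K^-)}.
\]
By \eqref{def_betaM}, $|v_h|_{H^1}$ is controlled by the energy norm, so summing yields $Ch^{3/2}\|v_h\|_h$.

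For \eqref{interpolant-exact-interface-jump}, the constant is $\alpha_K$ for $v_I$, and we need it to be one order smaller. I would use the decomposition $\widehat v_I=\widehat q-\widehat\eta$ from the proof of Lemma~\ref{lem:local_interpolation}, with $q^s=\Pi_Kv_E^s$. Then
\[
\llbracket\widehat\nabla\widehat v_I^\pm\rrbracket=\llbracket\widehat\nabla\widehat q^\pm\rrbracket-\llbracket\widehat\nabla\widehat\eta^\pm\rrbracket.
\]
Lemma~\ref{lem:auxiliary_functions}, together with the residual estimates \eqref{eq:value_residual_estimate}--\eqref{eq:mixed_residual_estimate}, bounds $\widehat\eta$ in $W^{1,\infty}$ by
\[
Ch_K^{2-N/2}\sum_s\|v_E^s\|_{H^2(K)}+Ch_K^{1-N/2}\bigl(\|G_\tau\|+\|G_n\|\bigr).
\]
For $\llbracket\widehat\nabla\widehat q^\pm\rrbracket\cdot\widehat n_h$, I would compare with $v_E$: the quantity $\llbracket\nabla q^\pm\rrbracket$ differs from $G_0(v)$ by $O(h_K^{1-N/2}\|v_E\|_{H^2})$ in $L^\infty$, and $G_0(v)$ is only $O(1)$. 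This is the main obstacle: the normal gradient jump of the exact solution does not vanish, so bounding $\alpha_K$ alone loses an order.

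I expect the fix is to avoid bounding $\alpha_K$ by itself and instead estimate the whole jump directly. Write
\[
\llbracket v_I^\pm\rrbracket=\llbracket v_I^\pm-v_E^\pm\rrbracket,
\]
which is valid on $\Gamma$ because $\llbracket v_E^\pm\rrbracket=0$ there. On each $\Gamma_K$, bound this in $L^\infty$ by the $L^\infty(K)$ interpolation error of each piece. For that, split $v_E^s-(\Pi_K^{\rm IFE}v)^s=(v_E^s-q^s)+\eta^s$ and use \eqref{standard_Q1_local_bounds} together with the $L^\infty$ bound on $\eta^s$ from Lemma~\ref{lem:auxiliary_functions}. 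This gives a per-element $L^\infty$ bound of
\[
Ch_K^{2-N/2}\sum_s\|v_E^s\|_{H^2(K)}+Ch_K^{1-N/2}\bigl(\|G_\tau\|_{L^2(K)}+\|G_n\|_{L^2(K)}\bigr).
\]
Multiplying by $|\Gamma_K|^{1/2}\sim h_K^{(N-1)/2}$ yields $h_K^{3/2}$ and $h_K^{1/2}$ powers. Summing and using \eqref{esti_strip} and \eqref{eq:H2_extension} then gives only $h^{3/2}$, so the crude bound is half an order short.

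To recover the full $h^2$, I would replace the volume $L^2$ norms of $G_\tau,G_n$ in the residual estimates by their traces on $\Gamma_K$, which vanish. The plan is to evaluate the residual components at points of $\Gamma$, using the $O(h_K^2)$ closeness in \eqref{est_x_K} and the $C^1$ regularity of $n$ and $\mathbb B_E$. For the flux component at $\widehat M_K$, whose distance from $\Gamma$ is $O(h_K)$, the plan is a Taylor expansion of $G_n$ with a piecewise-$H^2$ bound. Alternatively, a sharper strip estimate, $\|G\|_{L^2(U(\Gamma,h))}\le Ch\|\nabla G\|$ from \eqref{eq:strip_estimates}, applied globally gives $\sum\|G\|^2_{L^2(K)}\le Ch^2\|v\|^2_{H^2}$ rather than $h$. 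With this, the $h_K^{1/2}$ term improves to $h^{3/2}$, which I suspect is still insufficient. The honest bottleneck, then, is making the $G$-contributions $O(h_K^{2-N/2+1/2})$ per element, and that is where the argument needs the most care.
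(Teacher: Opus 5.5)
\textbf{First estimate.} Your proof of \eqref{eq:discrete-exact-interface-jump} is correct and is the paper's argument. The jump equals $\widehat\ell_K$ times a constant, Lemma~\ref{lem:interface_proximity} supplies the factor $h_K$ on $\widehat\Gamma_K$, and scaling plus \eqref{vh_pm_vh01} gives $Ch_K^{3/2}$ per element.

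\textbf{Second estimate: gap.} For \eqref{interpolant-exact-interface-jump} your proposal does not contain a proof; it stops at $h^{3/2}$. The missing idea is that the per-element bound from your first part already suffices for $v_h=v_I$:
\[
\|\llbracket v_h^\pm\rrbracket\|_{L^2(\Gamma_K)}^2\le Ch_K^3\sum_{s=\pm}|v_h^s|_{H^1(K)}^2 .
\]
You only need to avoid passing to the global energy norm, and instead use that the sum runs over interface elements only. Split $|v_I^s|_{H^1(K)}\le|v_E^s|_{H^1(K)}+|v_E^s-v_I^s|_{H^1(K)}$. The second part sums to $Ch^2\|v\|_{H^2(\Omega^+\cup\Omega^-)}^2$ by Lemma~\ref{lem:local_interpolation}, \eqref{esti_strip} and \eqref{eq:H2_extension}. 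For the first part, all interface elements lie in the tube $U(\Gamma,2h)$. The first strip estimate in \eqref{eq:strip_estimates}, applied to $\nabla v_E^s\in H^1(U(\Gamma,\delta_0))$, then gives
\[
\sum_{K\in\mathcal K_h^\Gamma}|v_E^s|_{H^1(K)}^2\le\|\nabla v_E^s\|_{L^2(U(\Gamma,2h))}^2\le Ch\|v\|_{H^2(\Omega^+\cup\Omega^-)}^2 .
\]
Hence $\|\llbracket v_I^\pm\rrbracket\|_{L^2(\Gamma)}^2\le Ch^3\cdot Ch\|v\|_{H^2(\Omega^+\cup\Omega^-)}^2$, which is exactly how the paper concludes.

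\textbf{On your diagnosis.} You said that bounding $\alpha_K$ alone ``loses an order'' because the exact normal-derivative jump $G_0(v)$ does not vanish. That is mistaken. The nonvanishing jump only has to be integrated over the union of interface elements, which has measure $O(h)$, and the strip estimate turns this into the missing factor $h^{1/2}$. Heuristically, a bounded $G_0$ makes your own expression $O(h_K^2)$ pointwise on $\Gamma_K$, which is already $O(h^2)$ in $L^2(\Gamma)$.

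\textbf{On your detour.} The route you take instead cannot be repaired. Writing $\llbracket v_I^\pm\rrbracket=\llbracket v_I^\pm-v_E^\pm\rrbracket$ and using $L^\infty$ interpolation bounds discards the structural factor $\widehat\ell_K=O(h_K)$ of the IFE jump. The term $Ch_K^{3/2}\|v_E^s\|_{H^2(K)}$ alone then caps that route at $h^{3/2}$, because second derivatives enjoy no strip gain. So no sharper treatment of the $G_\tau$ and $G_n$ contributions, which is where you locate the bottleneck, could reach $h^2$.
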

\begin{proof}
Consider \(K\in\mathcal K_h^\Gamma\). By
\eqref{dis_jump_Q1_1} and \eqref{dis_jump_Q1_3},
\(\llbracket\widehat v_h^\pm\rrbracket\) is affine and vanishes on
\(\widehat{\mathcal H}_K\). Hence,
\(
\llbracket\widehat v_h^\pm\rrbracket
=
\widehat\ell_K\,
\llbracket\widehat\nabla\widehat v_h^\pm\rrbracket
\cdot\widehat{n}_h.
\)
Using \eqref{eq:reference_interface_distance} and the standard scaling
argument, we obtain
\begin{equation}\label{eq:local-exact-interface-jump}
\begin{aligned}
\left\|\llbracket v_h^\pm\rrbracket\right\|_{L^2(\Gamma_K)}^2
&\leq
C h_K^{N-1}h_K^2
\sum_{s=\pm}
\|\widehat\nabla\widehat v_h^s\|_{L^2(\widehat K)}^2\leq
C h_K^3
\sum_{s=\pm}|v_h^s|_{H^1(K)}^2.
\end{aligned}
\end{equation}
Combining this estimate with \eqref{vh_pm_vh01} and summing over \(K\in\mathcal K_h^\Gamma\), we obtain \eqref{eq:discrete-exact-interface-jump}.

For \(v_I\), using the triangle inequality, the local
interpolation estimate \eqref{eq:local_interpolation}, and the strip and extension estimates
\eqref{eq:strip_estimates}, \eqref{esti_strip} and  \eqref{eq:H2_extension}, we have
\[
\begin{aligned}
\sum_{K\in\mathcal K_h^\Gamma}\sum_{s=\pm}
|v_I^s|_{H^1(K)}^2
&\leq
C\sum_{K\in\mathcal K_h^\Gamma}\sum_{s=\pm}
\left(
|v_E^s|_{H^1(K)}^2
+
|v_E^s-v_I^s|_{H^1(K)}^2
\right) \\
&\leq
C(h+h^2)\|v\|_{H^2(\Omega^+\cup\Omega^-)}^2
\leq
Ch\|v\|_{H^2(\Omega^+\cup\Omega^-)}^2.
\end{aligned}
\]
Applying \eqref{eq:local-exact-interface-jump} to \(v_h=v_I\) and summing
over the interface elements gives
\[
\left\|\llbracket v_I^\pm\rrbracket\right\|_{L^2(\Gamma)}^2
\leq
Ch^3
\sum_{K\in\mathcal K_h^\Gamma}\sum_{s=\pm}
|v_I^s|_{H^1(K)}^2
\leq
Ch^4\|v\|_{H^2(\Omega^+\cup\Omega^-)}^2.
\]
Taking square roots yields the desired result \eqref{interpolant-exact-interface-jump}.
\end{proof}

Using the above lemma, we can estimate  the consistency error of the proposed IFE method. By \eqref{method_IFE},
\eqref{p1}, and integration by parts, we obtain the following
error identity:
\begin{equation}\label{inde_consis}
    A_h(u-u_h,v_h)
    =
    -\int_\Gamma
    \mathbb B^-\nabla u^-\cdot\boldsymbol n\,
    \llbracket v_h^\pm\rrbracket\,ds
    \qquad
    \forall v_h\in V_{h,0}^{\rm IFE}.
\end{equation}
Applying the Cauchy--Schwarz
inequality, the global trace inequality, and
Lemma~\ref{lem:exact-interface-jump}, we obtain
\begin{equation}\label{consis_es}
    \left|A_h(u-u_h,v_h)\right|
    \leq
    Ch^{3/2}
    \|u\|_{H^2(\Omega^+\cup\Omega^-)}
    \|v_h\|_h
    \qquad
    \forall v_h\in V_{h,0}^{\rm IFE}.
\end{equation}

Combining the preceding results with the regularity estimate
\eqref{regular}, we obtain the following optimal error estimates.
\begin{theorem}
Let $u$ and $u_h$ solve \eqref{p1} and \eqref{method_IFE},
respectively.  Then
\[
   \|u-u_h\|_{L^2(\Omega)} + h\interleave u-u_h\interleave_h
    \le Ch^2\|u\|_{H^2(\Omega^+\cup\Omega^-)}.
\]
\end{theorem}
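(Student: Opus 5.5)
The plan is the standard Strang-type argument: an energy estimate from coercivity and consistency, then an Aubin--Nitsche duality argument for the \(L^2\) bound.

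\textbf{Energy estimate.} Let \(u_I=\Pi_h^{\rm IFE}u\), which lies in \(V_{h,0}^{\rm IFE}\) since \(u\) vanishes at boundary vertices. Set \(\xi_h=u_I-u_h\). By the coercivity \eqref{coer_A},
\[
\tfrac12\|\xi_h\|_h^2\le A_h(\xi_h,\xi_h)=A_h(u_I-u,\xi_h)+A_h(u-u_h,\xi_h).
\]
The first term is bounded by the continuity \eqref{bound_A}, the augmented interpolation estimate \eqref{aug_inter}, and the norm equivalence \eqref{norm_equ}, giving \(Ch\|u\|_{H^2}\|\xi_h\|_h\). The second term is bounded by the consistency estimate \eqref{consis_es}, giving \(Ch^{3/2}\|u\|_{H^2}\|\xi_h\|_h\). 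Hence \(\|\xi_h\|_h\le Ch\|u\|_{H^2}\). By \eqref{norm_equ} this also controls \(\interleave\xi_h\interleave_h\). The triangle inequality with \eqref{aug_inter} then yields \(\interleave u-u_h\interleave_h\le Ch\|u\|_{H^2(\Omega^+\cup\Omega^-)}\).

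\textbf{Duality setup.} Let \(e=u-u_h\) and let \(z\in\widetilde H^2(\Omega)\) solve \eqref{p1} with right-hand side \(e\). By \eqref{regular}, \(\|z\|_{H^2}\le C\|e\|_{L^2}\). Since \(A_h\) is symmetric and \(z\) satisfies the interface and boundary conditions, integration by parts elementwise gives an analogue of \eqref{inde_consis}:
\[
\|e\|_{L^2}^2=A_h(e,z)+\int_\Gamma \mathbb B^-\nabla z^-\cdot n\,\llbracket u_h^\pm\rrbracket\,ds .
\]
The boundary term needs care. On boundary faces \(u_h\) may not vanish pointwise, but boundary faces are non-interface faces, so \(u_h\) is \(Q_1\)-continuous there and vanishes at the vertices, hence vanishes identically on \(\partial\Omega\). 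Interface faces produce the \(b_h\) terms, and \(s_h(\cdot,z)\) vanishes because \([z]_e=0\).

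\textbf{Splitting and estimating.} Let \(z_I=\Pi_h^{\rm IFE}z\) and write
\[
A_h(e,z)=A_h(e,z-z_I)+A_h(e,z_I).
\]
The first term is bounded by \(\interleave e\interleave_h\,\interleave z-z_I\interleave_h\le Ch^2\|u\|_{H^2}\|e\|_{L^2}\). For the second term, use \eqref{inde_consis}:
\[
A_h(e,z_I)=-\int_\Gamma \mathbb B^-\nabla u^-\cdot n\,\llbracket z_I^\pm\rrbracket\,ds .
\]
The sharper bound \eqref{interpolant-exact-interface-jump} gives \(\|\llbracket z_I^\pm\rrbracket\|_{L^2(\Gamma)}\le Ch^2\|z\|_{H^2}\), and the trace inequality bounds \(\nabla u^-\) on \(\Gamma\). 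So this term is at most \(Ch^2\|u\|_{H^2}\|e\|_{L^2}\).

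\textbf{Main obstacle: the remaining interface integral.} The last term is \(\int_\Gamma \mathbb B^-\nabla z^-\cdot n\,\llbracket u_h^\pm\rrbracket\). Bounding it with \eqref{eq:discrete-exact-interface-jump} alone gives only \(h^{3/2}\). Instead I will split
\[
\llbracket u_h^\pm\rrbracket=\llbracket (u_h-u_I)^\pm\rrbracket+\llbracket u_I^\pm\rrbracket .
\]
The second piece is \(O(h^2)\) by \eqref{interpolant-exact-interface-jump}. For the first piece, apply the local bound \eqref{eq:local-exact-interface-jump} to \(u_h-u_I\in V_h^{\rm IFE}\), giving \(Ch^{3/2}\|u_h-u_I\|_h\le Ch^{5/2}\|u\|_{H^2}\). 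The interface integral is therefore \(O(h^2)\|u\|_{H^2}\|e\|_{L^2}\). Dividing through by \(\|e\|_{L^2}\) yields \(\|e\|_{L^2}\le Ch^2\|u\|_{H^2}\), which together with the energy estimate gives the claimed bound.
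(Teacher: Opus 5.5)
Your proposal is correct and follows essentially the same route as the paper. You derive the energy estimate from coercivity, continuity, \eqref{aug_inter} and \eqref{consis_es}; you treat $A_h(e,z_I)$ with $z_I=\Pi_h^{\rm IFE}z$ through \eqref{inde_consis} and \eqref{interpolant-exact-interface-jump}; and you split $\llbracket u_h^\pm\rrbracket=\llbracket (u_h-u_I)^\pm\rrbracket+\llbracket u_I^\pm\rrbracket$ to bound the remaining interface term by $O(h^2)$. The only slip is a harmless sign: since $\llbracket e^\pm\rrbracket=-\llbracket u_h^\pm\rrbracket$, the interface term in your duality identity should read $-\int_\Gamma \mathbb B^-\nabla z^-\cdot n\,\llbracket u_h^\pm\rrbracket\,ds$, and this does not matter because you only bound its absolute value.
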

\begin{proof}
In view of \eqref{coer_A}, \eqref{bound_A}, \eqref{norm_equ},
\eqref{aug_inter}, and \eqref{consis_es}, the proof of the following
energy-norm estimate is standard:
\begin{equation}\label{eq:energy-error-final}
 \| u-u_h\|_h+\interleave u-u_h\interleave_h
  \leq Ch\|u\|_{H^2(\Omega^+\cup\Omega^-)}.
\end{equation}

Let \(e=u-u_h\), and let \(z\in\widetilde H^2(\Omega)\) solve the adjoint interface problem associated with \eqref{p1} with right-hand side \(e\). By \eqref{regular}, the regularity
estimate gives
\begin{equation}\label{eq:dual-regularity}
 \|z\|_{H^2(\Omega^+\cup\Omega^-)}\le C\|e\|_{L^2(\Omega)}.
\end{equation}
Set \(z_I=\Pi_h^{\rm IFE}z\).
By integration by parts and \eqref{inde_consis}, we have
\[
\begin{aligned}
\|e\|_{L^2(\Omega)}^2
&=A_h(e,z)+\int_\Gamma \mathbb B^-\nabla z^-\cdot n
       \llbracket e^\pm\rrbracket\,ds\\
&=A_h(e,z-z_I)
 -\int_\Gamma \mathbb B^-\nabla u^-\cdot n\llbracket z_I^\pm\rrbracket\,ds
 -\int_\Gamma \mathbb B^-\nabla z^-\cdot n\llbracket u_h^\pm\rrbracket\,ds .
\end{aligned}
\]
Moreover, Lemma~\ref{lem:exact-interface-jump} and the energy estimate
\eqref{eq:energy-error-final} imply
\[
\begin{aligned}
\|\llbracket u_h^\pm\rrbracket\|_{L^2(\Gamma)}
&\le
 \|\llbracket u_I^\pm\rrbracket\|_{L^2(\Gamma)}
 +\|\llbracket (u_h-u_I)^\pm\rrbracket\|_{L^2(\Gamma)}\\
&\le Ch^2\|u\|_{H^2(\Omega^+\cup\Omega^-)}
 +Ch^{3/2}\|u_h-u+u-u_I\|_h
 \le Ch^2\|u\|_{H^2(\Omega^+\cup\Omega^-)}.
\end{aligned}
\]
Combining the above results with continuity, the interpolation estimates, 
the trace inequality, and Lemma~\ref{lem:exact-interface-jump} yields 
\[
\|e\|_{L^2(\Omega)}^2 \le Ch^2\|u\|_{H^2(\Omega^+\cup\Omega^-)}\|z\|_{H^2(\Omega^+\cup\Omega^-)}.
\]
Using \eqref{eq:dual-regularity} and dividing by
\(\|e\|_{L^2(\Omega)}\) proves the \(L^2\)-estimate. 
\end{proof}

\section{Numerical Experiments}\label{sec_num}
We present numerical experiments to validate the theoretical results. 
Let \(\varphi_\Gamma\) be a level set function such that
\[
\Gamma=\bigl\{  x\in\Omega:\varphi_\Gamma(  x)=0\bigr\},
\qquad
\Omega^+=\bigl\{  x\in\Omega:\varphi_\Gamma(  x)\geq0\bigr\}.
\]
Given  \(\varphi_\Gamma\) and  \(\mathbb B^\pm\), the exact solution is chosen as
\[
u^+
=
e^{-\varphi_\Gamma}\varphi_\Gamma
(\nabla\varphi_\Gamma)^\top\mathbb B^-\nabla\varphi_\Gamma,
\qquad
u^-
=
e^{-\varphi_\Gamma}\varphi_\Gamma
(\nabla\varphi_\Gamma)^\top\mathbb B^+\nabla\varphi_\Gamma.
\]
By construction, \(u\) satisfies the interface conditions
\eqref{p1.2}--\eqref{p1.3}. Since the manufactured solution generally does
not vanish on \(\partial\Omega\), we replace \eqref{p1.4} by \(u=g\) on \(\partial\Omega\) and
treat the resulting nonhomogeneous boundary condition in the standard
finite element manner. The source term \(f\) and the Dirichlet data \(g\)
are determined from the exact solution.

Starting from an initial mesh \(\mathcal K_{h_0}\), we generate
\(\mathcal K_{h_l}\), \(l\geq1\), by successive uniform refinement.
Let \(u_{h_l}\) denote the IFE solution on \(\mathcal K_{h_l}\).
We measure the
relative \(L^2\) and broken \(H^1\)-seminorm errors by
\[E_{0,l}
=
\frac{\|u-u_{h_l}\|_{L^2(\Omega)}}{\|u\|_{L^2(\Omega)}},
\qquad
E_{1,l}
=
\frac{\|\nabla_h (u-u_{h_l})\|_{L^2(\Omega)}}
     {|u|_{H^1(\Omega^+\cup\Omega^-)} }.
\]
We test the following two examples. Their computational domains, interfaces, and initial meshes are shown in
Fig.~\ref{fig:domains_meshes}.

\noindent
\textbf{Example 1 (Two-dimensional problem).}
We choose
\[\begin{aligned}
&\varphi_{\Gamma}(x_1,x_2)
=
\left(3(x_1^2+x_2^2)-x_1\right)^2
-x_1^2-x_2^2+0.02,\\
&\mathbb{B}^+=
\begin{pmatrix}
2 & -6\\
-6 & 19
\end{pmatrix},\quad
\mathbb{B}^-=
\begin{pmatrix}
1 & -4\\
-4 & 17
\end{pmatrix}.
\end{aligned}
\]

\noindent
\textbf{Example 2 (Three-dimensional problem).}
We choose
\[\begin{aligned}
&\varphi_{\Gamma}(x_1,x_2,x_3)
=x^2+y^2+z^2-0.5^2,\\
&\mathbb{B}^+(x_1,x_2,x_3)=
\begin{pmatrix}
4x_1^{2}+6 & \sin(x_1+x_2) & x_1x_2 \\
\sin(x_1+x_2) & 2x_3^{2}+3 & 0.5\sin(x_1) \\
x_1x_2 & 0.5\sin(x_1) & \cos^{2}(x_1x_2+x_3)+5
\end{pmatrix},\\
&\mathbb{B}^{-}(x_1,x_2,x_3)=
\begin{pmatrix}
\cos^{2}(x_1+x_2)+3 & x_3 & 0.2\sin(x_3-x_1) \\
x_3 & x_3^{2}+5 & x_2 \\
0.2\sin(x_3-x_1) & x_2 & \sin^{2}(x_3)+2
\end{pmatrix}.
\end{aligned}
\]
\begin{figure}[htbp]
    \centering
    \includegraphics[height=0.25\linewidth]{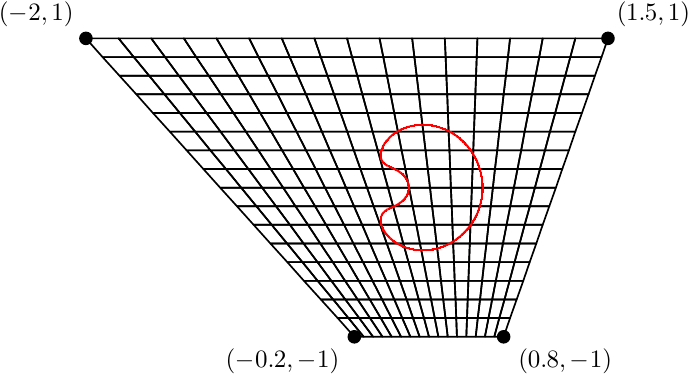}
     \includegraphics[height=0.25\linewidth]{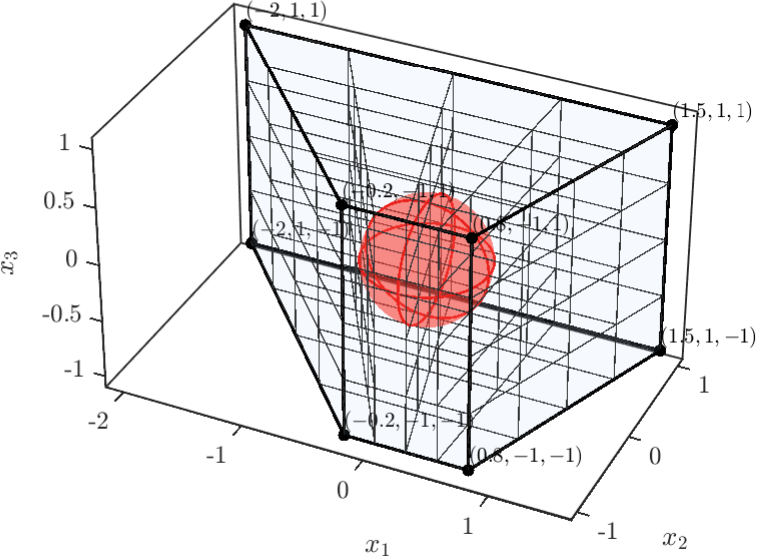}
    \caption{Computational domains, interfaces, and initial meshes for Example~1 (left) and Example~2 (right).}\label{fig:domains_meshes}
\end{figure}

The errors and convergence rates for both examples are reported in Table~\ref{tab:error}.
The results exhibit second-order convergence in the \(L^2\) norm and
first-order convergence in the broken \(H^1\) seminorm for both examples,
in agreement with the theoretical analysis.

\begin{table}[htbp]
\centering
\caption{Relative \(L^2\) and broken \(H^1\)-seminorm errors and convergence
rates for Examples~1 and~2.}
\label{tab:error}
\setlength{\tabcolsep}{3pt}
\resizebox{0.85\textwidth}{!}{%
\begin{tabular}{c|cc|cc|cc|cc}
\toprule
& \multicolumn{4}{c|}{Example~1}
& \multicolumn{4}{c}{Example~2} \\
\cline{2-9}
\(l\)
& \(E_{0,l}\) & rate
& \(E_{1,l}\) & rate
& \(E_{0,l}\) & rate
& \(E_{1,l}\) & rate \\
\hline
0
& \(8.8168\times10^{-2}\) & --
& \(3.7722\times10^{-1}\) & --
& $1.3157\times10^{-1}$ & --
& $3.9511\times10^{-1}$ & --\\
1
& \(2.4160\times10^{-2}\) & 1.87
& \(1.9321\times10^{-1}\) & 0.97
& $3.6878\times10^{-2}$ & 1.84
& $2.1491\times10^{-1}$ & 0.88\\
2
& \(6.3967\times10^{-3}\) & 1.92
& \(9.7025\times10^{-2}\) & 0.99
& $9.2086\times10^{-3}$ & 2.00
& $1.0771\times10^{-1}$ & 1.00\\
3
& \(1.6475\times10^{-3}\) & 1.96
& \(4.8533\times10^{-2}\) & 1.00
& $2.2992\times10^{-3}$ & 2.00
& $5.3927\times10^{-2}$ & 1.00\\
4
& \(4.1635\times10^{-4}\) & 1.98
& \(2.4265\times10^{-2}\) & 1.00
& $5.7263\times10^{-4}$ & 2.01
& $2.6900\times10^{-2}$ & 1.00\\
5
& \(1.0441\times10^{-4}\) & 2.00
& \(1.2132\times10^{-2}\) & 1.00
&  & 
&  &   \\
6
& \(2.6125\times10^{-5}\) & 2.00
& \(6.0660\times10^{-3}\) & 1.00
&   &  
&   &   \\
\bottomrule
\end{tabular}%
}
\end{table}

\appendix 
\section{Failure of Unisolvence for Centroid-Based Flux Enforcement}
\label{appendix}
Direct symbolic computation shows that, in each of the following examples, the linear system
determining the coefficients of the local IFE functions is singular;
hence, unisolvence fails.

\textbf{A scalar-coefficient counterexample on a mildly skewed parallelogram}
\begingroup
\[
\begin{gathered}
    \mathbb B^\pm=\beta^\pm\mathbb I,\qquad
    \beta^+=61,\quad \beta^-=1,\qquad
    A_3,A_4\in\Omega^+,                                                   \\[-2pt]
    A_1=\left(-\frac{53}{25},-\frac{21}{25}\right),\quad
    A_2=\left(\frac{4}{25},-\frac{22}{25}\right),\quad
    A_3=\left(\frac{53}{25},\frac{21}{25}\right),\quad
    A_4=\left(-\frac{4}{25},\frac{22}{25}\right),                         \\[-2pt]
    D=\left(-\frac{109}{60},-\frac{241}{420}\right),\qquad
    E=\left(\frac{11}{60},-\frac{361}{420}\right),\quad
        \widehat{\mathcal H}_K= \widehat D\widehat E,\quad
    \widehat M{_K}=\frac{\widehat D+\widehat E}{2}.
\end{gathered}
\]
\endgroup

\textbf{A tensor-coefficient counterexample on a square}
\[
\begin{gathered}
 K=[-1,1]^2,\quad
    \mathbb B^+
    =
    \begin{pmatrix}
        5 & -3\\
        -3 & 2
    \end{pmatrix},
    \quad 
    \mathbb B^-
    =
    \begin{pmatrix}
        5 & -\dfrac{24}{11}\\[2pt]
        -\dfrac{24}{11} & 1
    \end{pmatrix},\quad A_1\in\Omega^-,\\
    D=\widehat D=\left(-1,-\frac{1}{2}\right),\quad
    E=\widehat E=\left(0,-1\right),\quad 
        \widehat{\mathcal H}_K= \widehat D\widehat E,\quad
    \widehat M{_K}=\frac{\widehat D+\widehat E}{2}.
\end{gathered}
\]

\bibliographystyle{siamplain}
\bibliography{refer}
\end{document}